\documentclass[11pt,a4paper]{article}
\usepackage{latexsym,amsfonts,amsmath,amsthm,graphics}
\usepackage[normalem]{ulem}
\usepackage[shortlabels]{enumitem}
\usepackage[dvipsnames]{xcolor}
\usepackage{float}
\usepackage{pgfplots}
\usepackage{subcaption}
\usepackage{tikz}
\usepackage{pdfsync}

\usepackage{booktabs,caption}
\usepackage{multirow}

% COMMON LEGEND FOR MULTIPLE TIKZ FIGURES
\pgfplotsset{every tick label/.append style={font=\scriptsize}}
\newenvironment{customlegend}[1][]{
	\begingroup
	% inits/clears the lists (which might be populated from previous axes):
	\csname pgfplots@init@cleared@structures\endcsname
	\pgfplotsset{#1}
}{
% draws the legend:
\csname pgfplots@createlegend\endcsname
\endgroup
}

\def\addlegendimage{\csname pgfplots@addlegendimage\endcsname}
\pgfplotsset{
	cycle list={%
		{draw=black,mark=star,solid},
		{draw=black, mark=square,solid},
		{draw=black,mark=+,solid},
		{black,mark=o},
		{draw=black, mark=none,solid}}
}

\definecolor{mycolor1}{rgb}{0.00000,0.44700,0.74100}
\definecolor{mycolor2}{rgb}{0.85000,0.32500,0.09800}
\definecolor{mycolor3}{rgb}{0.49400,0.18400,0.55600}
\definecolor{mycolor4}{rgb}{0.12157,0.49804,0.31373}
\definecolor{mycolor5}{rgb}{0.85000,0.11800,0.09800}

\setlength{\textheight}{24.4cm} \setlength{\textwidth}{16cm}
\setlength{\hoffset}{-1.7cm} \setlength{\voffset}{-2cm}
\newtheorem{theorem}{Theorem}

\newtheorem{corollary}[theorem]{Corollary}

\newtheorem{algorithm}{Algorithm}
\newtheorem{remark}[theorem]{Remark}

\newcommand{\norm}[1]{\left\Vert#1\right\Vert}
\newcommand{\To}{\rightarrow}

\newcommand{\bsDelta}{\boldsymbol{\Delta}}

\newcommand{\bsgamma}{{\boldsymbol{\gamma}}}

\newcommand{\bsxi}{\boldsymbol{\xi}}

\newcommand{\bsv}{\boldsymbol{v}}

\newcommand{\bsx}{\boldsymbol{x}}
\newcommand{\bsh}{\boldsymbol{h}}
\newcommand{\bsg}{\boldsymbol{g}}
\newcommand{\bsq}{\boldsymbol{q}}
\newcommand{\bst}{\boldsymbol{t}}
\newcommand{\bsz}{\boldsymbol{z}}
\newcommand{\bsu}{\boldsymbol{u}}
\newcommand{\bsone}{\boldsymbol{1}}

\newcommand{\bsy}{\boldsymbol{y}}

\newcommand{\D}{{\cal D}}

\newcommand{\cP}{{\cal P}}

\newcommand{\wal}{{\rm wal}}
\newcommand{\sob}{{\rm sob}}

\newcommand\setu{{\mathfrak{u}}}
\newcommand{\uu}{\mathfrak{u}}

\newcommand{\icomp}{\mathtt{i}}
\newcommand{\bszero}{\boldsymbol{0}}
\newcommand{\rd}{\,\mathrm{d}}
\newcommand{\tr}{\mathrm{tr}}
\newcommand{\NN}{\mathbb{N}}
\newcommand{\ZZ}{\mathbb{Z}}

\newcommand{\RR}{\mathbb{R}}
\newcommand{\CC}{\mathbb{C}}
\newcommand{\FF}{\mathbb{F}}

\newcommand{\calH}{\mathcal{H}}

\renewcommand{\pmod}[1]{\,(\bmod\,#1)}

\newcommand{\abs}[1]{\left\vert#1\right\vert}

\newcommand{\calP}{\mathcal{P}}

\newcommand{\calZ}{{\mathcal{Z}}}

\newcommand{\cH}{{\cal H}}

 % complex numbers
 % field, finite field
\newcommand{\N}{{\mathbb{N}}} % natural numbers {1, 2, ...}
 % rationals
 % reals
\newcommand{\Z}{{\mathbb{Z}}} % integers

\newcommand{\setv}{{\mathfrak{v}}}
\newcommand*{\bfrac}[2]{\genfrac{}{}{0pt}{}{#1}{#2}}

\allowdisplaybreaks

\begin{document}

\title{Constructing lattice points for numerical integration by a reduced fast successive coordinate search algorithm}

\author{Adrian Ebert, Peter Kritzer}

\date{\today}

\maketitle

\begin{abstract}
\noindent In this paper, we study an efficient algorithm for constructing node sets of high-quality quasi-Monte Carlo integration rules 
for weighted Korobov, Walsh, and Sobolev spaces. The algorithm presented is a reduced fast successive coordinate search (SCS) algorithm, which is adapted to situations where the weights in the function space show a sufficiently fast decay. The new SCS algorithm is designed to work for the construction of lattice points, and, in a modified version, for polynomial lattice points, and the corresponding integration rules can be used to treat functions in different kinds of function spaces. We show that the integration rules constructed by our algorithms satisfy error bounds of optimal convergence order. Furthermore, we give details on efficient implementation such that we obtain a considerable speed-up of previously known SCS algorithms. This improvement is illustrated by numerical results. The speed-up obtained by our results may be of particular interest in the context of QMC for PDEs with random coefficients, where both the dimension and the required number of points are usually very large. Furthermore, our main theorems yield previously unknown generalizations of earlier results.
\end{abstract}

 \noindent\textbf{Keywords:} Numerical integration; lattice points; 
 polynomial lattice points; quasi-Monte Carlo methods; weighted function spaces; component-by-component construction; 
 successive coordinate search algorithm; fast implementations. 

 \noindent\textbf{2010 MSC:} 65D30, 65D32, 41A55, 41A63.

\section{Introduction}  \label{secIntro}

Quasi-Monte Carlo (QMC) rules are equal-weight integration rules that are used for approximating integrals of functions 
over $[0,1]^s$, 
\[
\frac{1}{N} \sum_{n=0}^{N-1} f(\bsx_n)\approx \int_{[0,1]^s } f(\bsx) \rd \bsx.
\]
As opposed to Monte Carlo rules, where the integration nodes $\bsx_0,\ldots,\bsx_{N-1}$ are selected at random, QMC integration 
is based on the idea of deterministically choosing the integration node set $\calP=\{\bsx_0,\ldots,\bsx_{N-1}\}$; here, the set $\calP$ 
is interpreted as a multi-set, i.e., points are considered taking their multiplicity into account. For introductions to QMC methods 
and their applications we refer to \cite{DKS13, DP10, L09, LP14, N92b}. 

Modern approaches to efficient QMC methods usually consider numerical integration for elements of Banach spaces, or, using a narrower setting, as 
in the present paper, for elements of certain reproducing kernel Hilbert spaces $(\calH, \norm{\cdot}_{\calH})$. For further information on reproducing 
kernel Hilbert spaces, 
see \cite{A50}, and for details on the relation between such spaces and QMC theory, we refer to \cite{SW98, SW01}. In this context, the criterion 
considered for assessing 
the quality of a QMC integration rule based on a node set $\calP = \{\bsx_0,\ldots,\bsx_{N-1}\}$ for integration in a space 
$(\calH, \norm{\cdot}_{\calH})$ 
is the worst-case error,
\[
e_{N,s} (\calH,\calP):=\sup_{\substack{f\in\calH \\ \norm{f}_{\calH}\le 1}} \abs{\int_{[0,1]^s} f(\bsx)\rd\bsx - \frac{1}{N}\sum_{n=0}^{N-1} f(\bsx_n)}.
\]

In this paper, we investigate special types of QMC rules, namely lattice rules (see, e.g., \cite{N92b, SJ94} for introductions) and polynomial lattice rules. Here, we solely consider rank-1 lattice rules which are based 
on the choice of a positive integer $N$ and a so-called generating vector $\bsz\in \{0,1,\ldots,N-1\}^s$. 
Using these parameters, an $N$-element lattice point set is given by the points 
\[
\bsx_n:=\left\{\frac{n\bsz}{N}\right\},\quad 0\le n\le N-1.
\]
Here, we write $\{x\}=x-\lfloor x \rfloor$ for real numbers $x$, and apply $\{\cdot\}$ componentwise for vectors. 
Further details on these point sets and the function spaces whose elements 
can be integrated numerically using lattice rules will be given below in Section \ref{secKor}.

Polynomial lattice rules, see, e.g., \cite{DP10, N92b} are of a similar structure as lattice rules, 
but arithmetic over the reals is replaced by arithmetic of polynomials over finite fields. 
We will give further details on polynomial lattice rules in Section \ref{secWal}.

Returning to lattice rules, the crucial question regarding these integration rules is how to find a generating vector $\bsz$ 
that guarantees a low worst-case error of integration in a given function space. In general, there are no explicit constructions of good generating vectors for dimensions $s \ge 3$. 
One way to find good generating vectors is the component-by-component (CBC) construction, which is based on greedy algorithms choosing one component of the generating vector at a time. 
It was shown in \cite{K03} for prime $N$ and in \cite{D04} for non-prime $N$ that it is possible to find generating vectors yielding essentially optimal results for certain spaces of $s$-variate 
functions by the CBC construction. Furthermore, it was shown in \cite{NC06b,NC06} that the computational cost of these algorithms is of order $\mathcal{O} (s N\log N)$. 
While the technique outlined in \cite{NC06b,NC06} is very sophisticated, and the computational cost of order $\mathcal{O} (s N\log N)$ 
is excellent in comparison to previously known results, there is one drawback that remains. For $s$ and $N$ that are simultaneously large this cost may be still too high to construct $\bsz$. 
This is for example the case in recently analyzed PDE applications, see, e.g. \cite{DKLGNS14} and \cite{KSS12}, in which the quantity of interest is given as an infinite-dimensional integral which 
is approximated by a very high-dimensional integral using a large number of (polynomial) lattice points. In the paper \cite{DKLP15} it was therefore shown that this order of magnitude can be reduced further under 
suitable circumstances. The idea underlying the main result in \cite{DKLP15} is to use the concept of weighted function spaces in the CBC construction. We will now shortly comment on weighted spaces and tractability, 
in order to describe the general idea of the paper \cite{DKLP15} and also of the present paper.

The idea to use weighted function spaces in the context of quasi-Monte Carlo methods was introduced in the seminal paper \cite{SW98}. Motivated by applications 
from financial mathematics, where different variables may have very different influence on a computational problem, Sloan and Wo\'{z}niakowski introduced additional parameters in the definition 
of the function spaces under consideration, namely weights. These are given by a set of nonnegative real numbers 
$(\gamma_{\setu})_{\setu\subseteq [s]}$. Here and in the following, we write $[s]$ 
to denote the index set $\{1,\ldots,s\}$. The weight $\gamma_{\setu}$ models the importance of the projection of a given integrand $f$ in the function space onto the variables $x_j$ with $j\in\setu$. 
A small value of $\gamma_{\setu}$ means that the corresponding group of variables has only little influence on the problem, whereas a large value of $\gamma_{\setu}$ means the opposite. A special 
but important subcase is the case of product weights, where $\gamma_{\setu}=\prod_{j\in\setu} \gamma_j$ for a (usually non-increasing) sequence $(\gamma_j)_{j\ge 1}$ of positive integers. In this case, $\gamma_j$ can be thought of as modeling the influence of the variable $x_j$. 

The effect of studying weighted spaces in integration problems is that, if the influence of the variables (or, in other words, the weights) 
in the problem decay sufficiently fast for coordinates with high indices, one can vanquish the curse of dimensionality that is inherent to many 
high-dimensional problems. Indeed, under certain summability conditions on the weights, it is even possible to obtain bounds on the integration error that do not depend on the dimension of 
the problem at all. This is a property known as tractability, and we refer to the trilogy of Novak and Wo\'{z}niakowski \cite{NW08}--\cite{NW12} for extensive information on this subject.

The paper \cite{DKLP15} incorporated the weights of a given function space in the CBC construction of lattice rules that yield a low integration error for the same function space. Indeed, 
depending on the weights, the size of the search space for each component of the generating vector $\bsz$ was adjusted to the corresponding coordinate weight. This reduction is the motivation 
for calling the modified CBC algorithm from \cite{DKLP15} a ``reduced'' CBC construction. It was also shown in \cite{DKLP15} that the reduced CBC construction can be adapted to the existing 
fast CBC construction of Nuyens and Cools. Furthermore, it was shown that in the case of sufficiently fast decaying product weights the computational cost of the 
resulting reduced fast CBC construction can be independent of the dimension. These results also hold analogously for the case of polynomial lattice rules.

\bigskip

A different modification of the fast CBC construction was presented in the recent paper \cite{ELN18}, where a so-called successive coordinate search (SCS) algorithm was presented. In this 
approach, one starts with a given generating vector $\bsz^0$ of a lattice rule. Then, the single components of this starting vector are 
improved on a step-by-step basis. The difference in 
the SCS approach, as opposed to the CBC approach, is that the algorithm has the starting vector as an input and the generating vector is not constructed from scratch. In particular, one could 
use the output of the fast CBC algorithm (or alternatively, of a previous instance of the SCS algorithm) as the input for the SCS algorithm and thereby further improve on the quality of the corresponding 
lattice rule. It is also possible to have a fast implementation of the SCS algorithm which has a computational cost of $\mathcal{O}(s N\log N)$, which is the same as that of the fast CBC construction. 
The paper \cite{ELN18} contains, apart from a theoretical analysis of the algorithm, also numerical results on the performance of the SCS algorithms. The numerical results show that the SCS algorithm 
can yield a significant improvement of the CBC algorithm for particular parameter settings (in particular, the performance is influenced by the choice of weights $\gamma_{\setu}$ in the problem).

\bigskip

In the present paper, we would like to combine the approaches in \cite{DKLP15} and \cite{ELN18}, and present a reduced fast SCS algorithm. This algorithm should be particularly well 
suited for situations 
in which one requires the construction of a large number of lattice points in high dimensions, with sufficiently fast decaying weights. The reduced fast SCS algorithm will again work by improving 
on a given starting vector $\bsz^0$, on a step-by-step basis (one component after the other). In comparison to the usual SCS algorithm presented in \cite{ELN18}, 
however, the search spaces for the single components of the 
output vector will be reduced according to the coordinate weights, thus speeding up the construction method. 
We are going to show that for suitable choices of weights the construction cost of the reduced fast SCS algorithm can be made independent of the dimension, and that its result can be at least as good as
that of the reduced fast CBC construction presented in \cite{DKLP15}. 
Our results will be shown for integration algorithms for functions in weighted Korobov spaces, but, as we shall see below, they also can be transferred to hold for certain Sobolev spaces of functions.
Apart from introducing and analyzing the reduced SCS algorithm for the construction of lattice points, our results imply a generalization of the results that have been presented in the paper 
\cite{ELN18}, in the sense that the SCS algorithm now works for $N$ being a prime power, and for general coordinate weights (as opposed to prime $N$ and product weights in \cite{ELN18}). 

We will also show that the SCS algorithm, as well as the reduced (fast) SCS algorithm can be adapted for constructing polynomial lattice rules which 
can be used for integrating functions in Walsh spaces and again certain Sobolev spaces. We stress that the present paper is the first paper where SCS algorithms for the polynomial lattice rule 
case are analyzed.

Moreover, we will present numerical results demonstrating that the reduced SCS algorithm constructs lattice rules which exhibit the
same error convergence rate as the (reduced) CBC construction provided the weights decay sufficiently fast. Additionally, we will demonstrate the speed of the reduced SCS algorithm via timings. This achieved speed-up in the construction of lattice rules is of
great importance when considering very high-dimensional integration problems as in, e.g., \cite{DKLGNS14} and \cite{KSS12},
and thus promising for further application.

\bigskip

The rest of the paper is structured as follows. In Section \ref{secKor}, we introduce Korobov spaces and point out how results for these are 
related to results for Sobolev spaces. Section \ref{secSCS} contains our main results regarding the reduced SCS construction for lattice rules. 
This is followed by remarks on how to obtain a fast implementation of the reduced SCS construction in Section \ref{secFastscs} and 
numerical results for lattice rules in Section \ref{secNum}. We conclude the paper with a section on corresponding results for polynomial lattice rules.

\section{Korobov spaces and related Sobolev spaces} \label{secKor}

We consider a weighted Korobov space with general weights as studied in
\cite{DSWW06,NW10}. Let us first introduce some notation. We denote by
$\ZZ$ the set of integers, by $\ZZ_{\ast}$ the set of integers excluding 0, and by $\NN$ the set of positive integers. As above, for $s \in \NN$ we
write $[s]=\{1,2,\ldots,s\}$. For a vector $\bsx=(x_1,\ldots,x_s)\in [0,1]^s$ and for $\setu \subseteq [s]$, we write $\bsx_\setu=(x_j)_{j \in \setu} \in
[0,1]^{|\setu|}$ and $(\bsx_{\setu},\bszero)\in [0,1]^s$ for the vector $(y_1,\ldots,y_s)$ with $y_j=x_j$ if $j \in \setu$ and $y_j=0$ if $j \not\in
\setu$. For integer vectors $\bsh\in\ZZ^s$, and $\setu\subseteq [s]$, we analogously write $\bsh_{\setu}$ to denote the projection of $\bsh$ onto those components with indices in $\setu$.

As outlined in the introduction, 
the importance of the different components or groups of components of the functions from the Korobov space to be defined is specified by a sequence of 
positive weights $\bsgamma=(\gamma_{\setu})_{\setu \subseteq [s]}$, where we 
may assume that $\gamma_{\emptyset}=1$. The smoothness of the functions in the space is described with a parameter $\alpha>1$. 

The weighted Korobov space, denoted by $\cH(K_{s,\alpha,\bsgamma})$, is a reproducing kernel Hilbert space with kernel function
\begin{align*}
	K_{s,\alpha,\bsgamma}(\bsx,\bsy) & = 1+\sum_{\emptyset \not=\setu \subseteq [s]} \gamma_{\setu} \prod_{j \in \setu}\left(\sum_{h \in \ZZ_{\ast}} \frac{\exp(2 \pi \icomp h(x_j-y_j))}{|h|^{\alpha}}\right)\\
	&= 1+ \sum_{\emptyset \not=\setu \subseteq [s]} \gamma_{\setu} \sum_{\bsh_{\setu}\in \ZZ_{\ast}^{|\setu|}} \frac{\exp(2 \pi \icomp \bsh_{\setu}\cdot (\bsx_{\setu}-\bsy_{\setu}))}{\prod_{j \in \setu}|h_j|^{\alpha}}.
\end{align*}
The corresponding inner product is
\[
\langle f,g\rangle_{K_{s,\alpha,\bsgamma}}=\sum_{\setu \subseteq [s]} \gamma_{\setu}^{-1} \sum_{\bsh_{\setu}\in \ZZ_{\ast}^{|\setu|}} \left(\prod_{j \in \setu}|h_j|^{\alpha}\right) \widehat{f}((\bsh_{\setu},\bszero)) \overline{\widehat{g}((\bsh_{\setu},\bszero))},
\]
where $\widehat{f}(\bsh)=\int_{[0,1]^s} f(\bst) \exp(-2 \pi \icomp \bsh \cdot \bst)\rd \bst$ is the $\bsh$-th Fourier coefficient of $f$.

For $h \in \ZZ_{\ast}$, we define $\rho_{\alpha}(h)=|h|^{-\alpha}$, and for $\bsh=(h_1,\ldots,h_s) \in \ZZ_{\ast}^s$ let 
$\rho_{\alpha}(\bsh)=\prod_{j=1}^s \rho_{\alpha}(h_j)$.

It is known (see, e.g., \cite{DSWW06}) that the squared worst-case error of a lattice rule generated by a vector 
$\bsz \in \ZZ^s$ in the weighted Korobov space $\cH(K_{s,\alpha,\bsgamma})$ is given by 
\begin{equation}\label{eqerrorexprlps}
e_{N,s}^2 (\bsz)=\sum_{\emptyset\neq\setu\subseteq [s]}\gamma_\setu 
\sum_{\bsh_{\setu}\in\D_\setu}\rho_\alpha (\bsh_\setu),
\end{equation}
where 
$$
\D_\setu:=\left\{\bsh_\setu\in\ZZ_{\ast}^{\abs{\setu}}\ : \ \bsh_\setu\cdot\bsz_\setu\equiv 0\ (\operatorname{mod}N) \right\}
$$
is called the dual lattice of the lattice generated by $\bsz$. In order to avoid too many parameters in the notation, we 
do not include the weights $\bsgamma$ when referring to the worst-case error $e_{N,s}$, unless this is essential for the context.

The worst-case error of lattice rules in a Korobov space can be related to the worst-case error in certain Sobolev spaces.
Indeed, consider a tensor product Sobolev space $\cH_{s,\bsgamma}^{\sob}$ of absolutely continuous functions whose mixed partial derivatives of order $1$ 
in each variable are square integrable, with norm (see \cite{H98}) 
\begin{equation*}
	\| f\|_{\cH_{s,\bsgamma}^{\sob}} = \left(\sum_{\setu \subseteq [s]}  \gamma_{\setu}^{-1} \int_{[0,1]^{|\setu|}} \left(\int_{[0,1]^{s-|\setu|}} 
	\frac{\partial^{|\setu|}}{\partial \bsx_{\setu}} f(\bsx) \rd \bsx_{[s]\setminus \setu} \right)^2 \rd \bsx_{\setu}\right)^{1/2},
\end{equation*}
where $\partial^{|\setu|}f/\partial \bsx_{\setu}$ denotes the mixed partial derivative with respect to all variables $j \in \setu$. 
As pointed out in \cite[Section~5]{DKS13}, the root mean square worst-case error $\widehat{e}_{N,s,\bsgamma}$ for QMC integration in 
$\cH_{s,\bsgamma}^{\sob}$ using randomly shifted lattice rules $(1/N)\sum_{k=0}^{N-1}f\left(\left\{ \frac{k}{N} \bsz+\bsDelta \right\} 
\right)$, i.e.,  
$$
\widehat{e}_{N,s,\bsgamma}(\bsz)=\left(\int_{[0,1)^s} e_{N,s,\bsgamma}^2(\bsz,\bsDelta) \rd \bsDelta\right)^{1/2},
$$ 
where $e_{N,s,\bsgamma}(\bsz,\bsDelta)$ is the worst-case error for 
QMC integration in $\cH_{s,\bsgamma}^{\sob}$ using a shifted integration lattice, 
is essentially the same as the worst-case error $e_{N,s,\bsgamma}^{(2)}$ in the weighted Korobov space 
$\cH(K_{s,2,\bsgamma})$ using the unshifted version of the lattice rules. In fact, we have 
\begin{equation} \label{eq:wceeqwce}
\widehat{e}_{N,s, 2 \pi^2 \bsgamma}(\bsz)=e_{N,s,\bsgamma}^{(2)}(\bsz),
\end{equation}
where $2 \pi^2 \bsgamma$ denotes the weights $( (2 \pi^2)^{|\setu|} \gamma_{\setu})_{\emptyset \not=\setu \subseteq [s]}$. 
For a connection to the so-called anchored Sobolev space see, e.g., \cite[Section~4]{HW00}. 

In a slightly different setting, the random shift can be replaced by the tent transform $\phi(x) = 1 - |1-2x|$ in each variable. 
For a vector $\bsx \in [0,1]^s$ let $\phi(\bsx)$ be defined component-wise. 
Let $\widetilde{e}_{N,s, \bsgamma}(\bsz)$ be the worst-case error in the unanchored weighted Sobolev space $\cH_{s, \bsgamma}^{\sob}$ 
using the QMC rule $(1/N)\sum_{k=0}^{N-1}f\left(\phi\left(\left\{ \frac{k}{N} \bsz \right\} \right) \right)$. 
Then it is known due to \cite{DNP14} and \cite{CKNS16} that
\begin{equation}\label{eq_wce_tent}
\widetilde{e}_{N,s, \pi^2 \bsgamma}(\bsz) \le e_{N,s,\bsgamma}^{(2)}(\bsz),
\end{equation} 
where $\pi^2 \bsgamma = (\pi^{2|\setu|} \gamma_{\setu})_{\emptyset \neq \setu \subseteq [s]}$, and that the CBC construction 
with quality criterion given by the worst-case error of the Korobov space $\cH(K_{s,2,\bsgamma})$ can be used to construct tent-transformed 
lattice rules which achieve the almost optimal convergence order in the space $\cH_{s,\pi^2 \bsgamma}^{\sob}$ under appropriate conditions 
on the weights $\bsgamma$ (see \cite[Corollary 1]{CKNS16}). Hence we also have a direct connection between integration in the Korobov space using lattice rules and integration in the unanchored Sobolev space using tent-transformed lattice rules.

Thus, the results that will be shown in the following are valid for the root mean square worst-case error and the worst-case error using tent-transformed lattice rules in the unanchored Sobolev space as well as for the worst-case error in the Korobov space. Hence it suffices to state them only for $e_{N,s}$. Equation~(\ref{eq:wceeqwce}) can be used to obtain results also for $\widehat{e}_{N,s,\bsgamma}$ and Equation~\eqref{eq_wce_tent} can be used to obtain results for $\widetilde{e}_{N,s,\bsgamma}$.

What is more, there is also a connection between the worst-case errors for numerical integration using polynomial lattice rules in the Walsh space that 
we will introduce in Section \ref{secWal} and the anchored \cite[Section~5]{DKPS05} and unanchored \cite[Section~6]{DP05} Sobolev space.

\section{The reduced successive coordinate search algorithm} \label{secSCS}

Let the number of quadrature points $N=b^m$ be a power of a prime number $b$, and $m \in \N$. Furthermore, we assume general weights $\gamma_{\setu}$, $\setu\subseteq [s]$. \\

We further assume we are given non-negative integers $w_j$ ordered in a non-decreasing fashion, i.e., 
$0 \le w_1 \le w_2 \le w_3 \le \cdots$ . Additionally, we set $s^*$ as the largest $j$ such that $w_j < m$. 
Next we define the reduced search space $\mathcal{Z}_{N,w_j}$ for the $j$-th component of the generating vector as
\begin{align*}
	\mathcal{Z}_{N,w_j} &= \begin{cases} \{z \in \{1,2,\ldots,b^{m-w_j}-1\} : \gcd(z,N)=1 \} & \mbox{if }  w_j < m,  \\ 
                                                              \{1\}  & \mbox{if } w_j \ge m, \end{cases} 
\end{align*}
and $Y_j = b^{w_j}$ for $j\in\{1,\ldots, s\}$. Then we consider the following algorithm for the construction of the generating vector $\bsz$ based on some initial vector $\bsz^0$. 

\begin{algorithm}\label{alg:lattice} 
Let $N=b^m$ be a prime power, let $\gamma_{\setu}$, $\setu\subseteq [s]$, be general weights, 
and let the worst-case error $e_{N,s}$ in the weighted Korobov space $\calH (K_{s,\alpha,\bsgamma})$ 
be defined as in Section \ref{secKor}. Let $w_1 \le w_2 \le \cdots \le w_s$ and $Y_j = b^{w_j}$ for $j\in\{1,\ldots, s\}$. 
Then we construct the generating vector $\bsz = (Y_1 z_1, \ldots, Y_s z_s)$ as 
follows.
\begin{itemize}
	\item \textbf{Input:} Starting vector $\bsz^0=(z_1^0,\ldots,z_s^0) \in \{0,1,\ldots,N-1\}^s$.
	\item For $d \in [s]$ assume $z_1,\ldots,z_{d-1}$ have already been selected. Then choose $z_d \in \mathcal{Z}_{N,w_d}$
	such that $e^2_{N,s}((Y_1 z_1, \ldots, Y_{d-1} z_{d-1}, Y_d z_d, z_{d+1}^0, \ldots, z_s^0))$ is minimized as a function of $z_d$.
	\item Increase $d$ until $z_1,\ldots,z_s$ are found.
\end{itemize}
\end{algorithm}

\begin{theorem} \label{thm: reduced SCS}
    Let the assumptions in Algorithm \ref{alg:lattice} hold. Let $\bsz = (Y_1 z_1, \ldots, Y_s z_s)$ be constructed by Algorithm \ref{alg:lattice} with initial
    vector $\bsz^0 \in \{0,1,\ldots,N-1\}^s$. Then, for $\lambda \in (\frac{1}{\alpha},1]$, the squared worst-case error $e^2_{N,s}(\bsz)$ satisfies
	\begin{align*}
		e^2_{N,s}((Y_1 z_1, \ldots, Y_s z_s))
		&\le
		\left( \sum_{d=1}^{s} \sum_{d \in \setu \subseteq [s]} \gamma_{\setu}^{\, \lambda} \frac{2 (2 \zeta(\alpha \lambda))^{|\setu|}}{b^{\max(0,m-w_d)}} \right)^{\frac{1}{\lambda}} .
	\end{align*}
\end{theorem}

\begin{proof}
	By \eqref{eqerrorexprlps}, we have for $\bsxi = (\xi_1, \ldots, \xi_s) \in \{1,\ldots,N-1\}^s$,
	\begin{align*}
		e^2_{N,s}(\bsxi)
		&=
		\sum_{\emptyset \ne \setu \subseteq [s]} \gamma_{\setu} \sum_{\bsh_{\setu} \in \mathcal{D}_{\setu}(\bsxi_{\setu})} \rho_{\alpha} (\bsh_{\setu})
	\end{align*}
	with $\mathcal{D}_{\setu}(\bsxi_{\setu}) = \{\bsh_{\setu} \in \Z_{*}^{|\setu|} : \bsh_{\setu} \cdot \bsxi_{\setu} \equiv 0\ (\operatorname{mod}N) \}$. We introduce the following notation:
	\begin{align*}
		g_{\setu}(\bsxi_{\setu}) := \gamma_{\setu} \sum_{\bsh_{\setu} \in \mathcal{D}_{\setu}(\bsxi_{\setu})} \rho_{\alpha} (\bsh_{\setu}) ,
		\qquad
		R_d(\bsxi) := \sum_{d \in \setu \subseteq [d]} g_{\setu}(\bsxi_{\setu}), 
	\end{align*}
	and hence we can rewrite the squared worst-case error as
	\begin{align*}
		e^2_{N,s}(\bsxi)
		&=
		\sum_{\emptyset \ne \setu \subseteq [s]} g_{\setu}(\bsxi_{\setu})
		=
		\sum_{d=1}^{s} R_d(\bsxi) . 	
	\end{align*}
	In the following we write, for $d \in \{1,\ldots,s\}$, $\bsz^{(d)}:=(Y_1 z_1, \ldots, Y_{d-1} z_{d-1}, Y_d z_d, z_{d+1}^0, \ldots, z_s^0)$.
	As minimizing $e^2_{N,s}(\bsz^{(d)})$ as a function of $z_d$ is equivalent to minimizing only those parts that depend on $z_d$, namely
	\begin{align*}
	\theta_d(\bsz^{(d)}) := \sum_{d \in \setu \subseteq [s]} g_{\setu}(\bsz_{\setu}^{(d)}),
	\end{align*}
	we consider this quantity for all $d$ and note that
	\begin{align*}
	e^2_{N,s}(\bsz)
	&=
	\sum_{d=1}^{s} R_d(Y_1 z_1, \ldots, Y_s z_s)
	=
	\sum_{d=1}^{s} \sum_{d \in \setu \subseteq [d]} g_{\setu}(\bsz_{\setu})
	\le
	\sum_{d=1}^{s} \sum_{d \in \setu \subseteq [s]} g_{\setu}(\bsz_{\setu}^{(d)})
	=
	\sum_{d=1}^{s} \theta_d(\bsz^{(d)}).
	\end{align*}
	We shall now make use of an inequality which is sometimes referred to as Jensen's
	inequality (see \cite{HLP34,J1906}):
	\[
	  \sum_{i=1}^M a_i
	  \le
	  \left(\sum_{i=1}^M a^p_i \right)^{1/p}
	  \qquad \text{for $0\le p\le 1$ and $a_1,\ldots,a_M \ge 0$}.
	\]
	Using Jensen's inequality we obtain, for $\lambda \in (\frac{1}{\alpha},1]$,
	\begin{align*}
	\left(e^2_{N,s}(\bsz)\right)^{\lambda}
	&\le
	\left(\sum_{d=1}^{s} \theta_d(\bsz^{(d)})\right)^{\lambda}
	\le 
	\sum_{d=1}^{s} \theta^{\lambda}_d(\bsz^{(d)}).
	\end{align*}
	By the standard averaging argument we obtain that, since the best choice for $z_d$ is at least as good as the average,
	\begin{eqnarray*}
	\theta^{\lambda}_d(\bsz^{(d)})&=&\theta^{\lambda}_d(Y_1 z_1, \ldots, Y_d z_d, z_{d+1}^0, \ldots, z_s^0)\\
	&\le&
	\frac{1}{|\mathcal{Z}_{N,w_d}|} \sum_{z \in \mathcal{Z}_{N,w_d}} \theta^{\lambda}_d(Y_1 z_1, \ldots, Y_{d-1} z_{d-1}, Y_d z, z_{d+1}^0, \ldots, z_s^0).
	\end{eqnarray*}
	We now use the notation $\hat{\bsz}^{(d)} = \hat{\bsz}^{(d)}(z) := (Y_1 z_1, \ldots, Y_{d-1} z_{d-1}, Y_d z, z_{d+1}^0, \ldots, z_s^0)$. 
	For the sake of readability, we will sometimes write $\hat{\bsz}^{(d)}=(\hat{z}_1,\ldots,\hat{z}_s)$ for short.
	Next, we establish an upper estimate for the quantity $\theta^{\lambda}_d(\hat{\bsz}^{(d)})$ for each $d \in \{1,\ldots,s\}$,
	\begin{align*}
		\theta^{\lambda}_d(\hat{\bsz}^{(d)})
		&=
		\left(\sum_{d \in \setu \subseteq [s]} g_{\setu}(\hat{\bsz}_{\setu}^{(d)})\right)^{\lambda}
		\le
		\sum_{d \in \setu \subseteq [s]} \gamma_{\setu}^{\lambda} \sum_{\bsh_{\setu} \in \mathcal{D}_{\setu}(\hat{\bsz}^{(d)}_{\setu})} \rho_{\alpha \lambda} (\bsh_{\setu}) \\
		&=
		\gamma_{\{d\}}^{\lambda} \sum_{h_d \in \mathcal{D}_{\{d\}}(Y_d z)} \rho_{\alpha \lambda} (h_d) 
		+ \sum_{\substack{\emptyset \ne \setv\subseteq [s] \\ d \notin \setv}} \gamma_{\setv \cup \{d\}}^{\lambda} \sum_{h_d \in \Z_{*}} \rho_{\alpha \lambda} (h_d)
	    \sum_{\substack{\bsh_{\setv} \in \Z_{*}^{|\setv|} \\ \sum_{j \in \setv} h_j \hat{z}_j \equiv -h_d Y_d z \; (N) }} \rho_{\alpha \lambda} (\bsh_{\setv}) ,
	\end{align*}
	where we used Jensen's inequality twice to obtain the first estimate, and where we write 
	$(N)$ to denote $\bmod\ N$ for short. This implies in turn that
	\begin{align*}
	\theta^{\lambda}_d(\bsz^{(d)}) \le \frac{1}{\abs{\mathcal{Z}_{N,w_d}}}\sum_{z\in\mathcal{Z}_{N,w_d}}\theta_d^{\lambda} (\hat{\bsz}^{(d)})
	\le
	T_1 + T_2 ,
	\end{align*}
	where
	\begin{align*}
		T_1 
		&= 
		\frac{1}{|\mathcal{Z}_{N,w_d}|} \sum_{z \in \mathcal{Z}_{N,w_d}} \gamma_{\{d\}}^{\lambda} 
		\sum_{h_d \in \mathcal{D}_{\{d\}}(Y_d z)} \rho_{\alpha \lambda} (h_d) 
	\end{align*}
	and
	\begin{align*}
	T_2 
	&= 
	\frac{1}{|\mathcal{Z}_{N,w_d}|} \sum_{z \in \mathcal{Z}_{N,w_d}} \sum_{\substack{\emptyset \ne \setv\subseteq [s] \\ d \notin \setv}} \gamma_{\setv \cup \{d\}}^{\lambda} \sum_{h_d \in \Z_{*}} \rho_{\alpha \lambda} (h_d) \sum_{\substack{\bsh_{\setv} \in \Z_{*}^{|\setv|} \\ \sum_{j \in \setv} h_j  \hat{z}_j \equiv -h_d Y_d z \; (N) }} \rho_{\alpha \lambda} (\bsh_{\setv}) . 
	\end{align*}
	For $T_1$ we consider the two possible cases $w_d \ge m$ and $w_d < m$. Then we obtain:
	\begin{itemize}
		\item If $w_d \ge m$, then $z \in \mathcal{Z}_{N,w_d}=\{1\}$ and $b^m = N$ is a divisor of $b^{w_d}$ so that
			\begin{align*}
				T_1 
				&= 
				\gamma_{\{d\}}^{\lambda} \sum_{\substack{h_d \in \Z_* \\ b^{w_d} h_d \equiv 0 \; (N)}} \rho_{\alpha \lambda} (h_d)
				=
				\gamma_{\{d\}}^{\lambda} \sum_{h_d \in \Z_*} \rho_{\alpha \lambda} (h_d)
				=
				\gamma_{\{d\}}^{\lambda} 2 \zeta(\alpha \lambda)
				=
				\gamma_{\{d\}}^{\lambda} \frac{2 \zeta(\alpha \lambda)}{b^{\max(0,m-w_d)}} .
			\end{align*}
		\item If $w_d < m$, then $h_d b^{w_d} z \equiv 0 \; (N)$, i.e., $b^{w_d} h_d z = k b^m$ for some $k \in \Z$, is equivalent to $h_d z = k b^{m-w_d}$
			for some $k \in \Z$. Now if $z \mid k \, b^{m-w_d}$ then, since $z \in \mathcal{Z}_{N,w_d}$, we have $b^\ell \hspace{-3.5pt}\not|\hspace{4pt} z$ for all $\ell =1,\ldots,m-w_d$ which implies that $z \mid k$, i.e., $k = k' z$ for some $k' \in \Z$. Hence
			\begin{align*}
				h_d b^{w_d} z \equiv 0 \; (N)
				&\Leftrightarrow					
				b^{w_d} h_d z = k b^m
				\Leftrightarrow
				h_d z = k b^{m-w_d}
				\Leftrightarrow
				h_d z = k' z b^{m-w_d} \\				
				&\Leftrightarrow	
				h_d = k' b^{m-w_d}
				\Leftrightarrow
				b^{m-w_d} \mid h_d,
			\end{align*}
			and we obtain
			\begin{align*}
				T_1 
				&= 
				\frac{1}{|\mathcal{Z}_{N,w_d}|} \sum_{z \in \mathcal{Z}_{N,w_d}} \gamma_{\{d\}}^{\lambda} \sum_{\substack{h_d \in \Z_* \\ Y_d h_d z \equiv 0 \; (N)}} \rho_{\alpha \lambda} (h_d)
				=
				\frac{1}{|\mathcal{Z}_{N,w_d}|} \sum_{z \in \mathcal{Z}_{N,w_d}} \gamma_{\{d\}}^{\lambda} \sum_{\substack{h_d \in \Z_* \\ b^{m-w_d} | h_d}} \rho_{\alpha \lambda} (h_d) \\
				&=
				\gamma_{\{d\}}^{\lambda} \sum_{h_d \in \Z_*} \rho_{\alpha \lambda} (b^{m-w_d} h_d)
				=
				\gamma_{\{d\}}^{\lambda} \sum_{h_d \in \Z_*} b^{-\alpha \lambda (m-w_d)}  \rho_{\alpha \lambda} (h_d) \\
				&= 
				\gamma_{\{d\}}^{\lambda} b^{-\alpha \lambda (m-w_d)} 2 \zeta(\alpha \lambda)
				\le
				\gamma_{\{d\}}^{\lambda} \frac{2 \zeta(\alpha \lambda)}{b^{\max(0,m-w_d)}} .
			\end{align*}
	\end{itemize}
	Therefore, in both possible cases, it holds that
	\begin{align*}
		T_1 
		&\le
		\gamma_{\{d\}}^{\lambda} \frac{2 \zeta(\alpha \lambda)}{b^{\max(0,m-w_d)}} .
	\end{align*}
	Similarly, we investigate the term $T_2$ for both cases.  		
	\begin{itemize}
		\item If $w_d \ge m$, then $z \in \mathcal{Z}_{N,w_d}=\{1\}$ and $b^{\max(0,m-w_d)} = b^0 = 1$, and so
		\begin{align*}
			T_2
			&=
			\sum_{\substack{\emptyset \ne \setv\subseteq [s] \\ d \notin \setv}} \gamma_{\setv \cup \{d\}}^{\lambda} \sum_{h_d \in \Z_{*}} \rho_{\alpha \lambda} (h_d) \sum_{\substack{\bsh_{\setv}\in \Z_{*}^{|\setv|} \\ \sum_{j \in \setv} h_j \hat{z}_j \equiv -h_d Y_d \; (N) }} \rho_{\alpha \lambda} (\bsh_{\setv}) \\
			&\le 
				\sum_{h_d \in \Z_{*}} \rho_{\alpha \lambda} (h_d) \sum_{\substack{\emptyset \ne \setv\subseteq [s] \\ d \notin \setv}} \gamma_{\setv \cup \{d\}}^{\lambda}  \sum_{\bsh_{\setv}\in \Z_{*}^{|\setv|}} \rho_{\alpha \lambda} (\bsh_{\setv}) \\
			&=
			\frac{2 \zeta(\alpha \lambda)}{b^{\max(0,m-w_d)}} \sum_{\substack{\emptyset \ne \setv\subseteq [s] \\ d \notin \setv}} \gamma_{\setv \cup \{d\}}^{\lambda} \sum_{\bsh_{\setv} \in \Z_{*}^{|\setv|}} \rho_{\alpha \lambda} (\bsh_{\setv}) \\
			&=
			\frac{2 \zeta(\alpha \lambda)}{b^{\max(0,m-w_d)}} \sum_{\substack{\emptyset \ne \setv\subseteq [s] \\ d \notin \setv}} \gamma_{\setv \cup \{d\}}^{\lambda} (2 \zeta(\alpha \lambda))^{|\setv|}
			=
			\sum_{\substack{\emptyset \ne \setv\subseteq [s] \\ d \notin \setv}} \gamma_{\setv \cup \{d\}}^{\lambda} \frac{(2 \zeta(\alpha \lambda))^{|\setv|+1}}{b^{\max(0,m-w_d)}} \\
			&=
			\sum_{\substack{\{d\} \ne \setu\subseteq [s] \\ d \in \setu}} \gamma_{\setu}^{\lambda} \frac{(2 \zeta(\alpha \lambda))^{|\setu|}}{b^{\max(0,m-w_d)}}.
		\end{align*}
		\item If $w_d < m$, then, with $\abs{\mathcal{Z}_{N,w_d}}=b^{m-w_d-1} (b-1)$, we write
		\begin{align*}
			T_2
			&=
			\frac{1}{b^{m-w_d-1} (b-1)} \left[ \sum_{z \in \mathcal{Z}_{N,w_d}} \sum_{\substack{\emptyset \ne \setv\subseteq [s] \\ d \notin \setv}} \gamma_{\setv \cup \{d\}}^{\lambda} \sum_{\substack{h_d \in \Z_{*} \\ h_d \equiv 0 \; (b^{m-w_d})}} \rho_{\alpha \lambda} (h_d) \sum_{\substack{\bsh_{\setv} \in \Z_{*}^{|\setv|} \\ \sum_{j \in \setv} h_j \hat{z}_j \equiv -h_d Y_d z \; (N) }} \rho_{\alpha \lambda} (\bsh_{\setv}) \right. \\
			&+ \left. \sum_{z \in \mathcal{Z}_{N,w_d}} \sum_{\substack{\emptyset \ne \setv\subseteq [s] \\ d \notin \setv}} \gamma_{\setv \cup \{d\}}^{\lambda} \sum_{\substack{h_d \in \Z_{*} \\ h_d \not\equiv 0 \; (b^{m-w_d})}} \rho_{\alpha \lambda} (h_d) \sum_{\substack{\bsh_{\setv} \in \Z_{*}^{|\setv|} \\ \sum_{j \in \setv} h_j \hat{z}_j \equiv -h_d Y_d z \; (N) }} \rho_{\alpha \lambda} (\bsh_{\setv}) \right] \\
			&=
			T_{2,1} + T_{2,2} 
			,
		\end{align*}
		where
		\begin{align*}
		    T_{2,1} = \frac{1}{b^{m-w_d-1} (b-1)} \sum_{z \in \mathcal{Z}_{N,w_d}} \sum_{\substack{\emptyset \ne \setv\subseteq [s] \\ d \notin \setv}} \gamma_{\setv \cup \{d\}}^{\lambda} \sum_{\substack{h_d \in \Z_{*} \\ h_d \equiv 0 \; (b^{m-w_d})}} \rho_{\alpha \lambda} (h_d) \sum_{\substack{\bsh_{\setv} \in \Z_{*}^{|\setv|} \\ \sum_{j \in \setv} h_j \hat{z}_j \equiv -h_d Y_d z \; (N) }} \rho_{\alpha \lambda} (\bsh_{\setv})
		\end{align*}
		and
		\begin{align*}
		 T_{2,2} = \frac{1}{b^{m-w_d-1} (b-1)} \sum_{z \in \mathcal{Z}_{N,w_d}} \sum_{\substack{\emptyset \ne \setv\subseteq [s] \\ d \notin \setv}} \gamma_{\setv \cup \{d\}}^{\lambda} \sum_{\substack{h_d \in \Z_{*} \\ h_d \not\equiv 0 \; (b^{m-w_d})}} \rho_{\alpha \lambda} (h_d) \sum_{\substack{\bsh_{\setv} \in \Z_{*}^{|\setv|} \\ \sum_{j \in \setv} h_j \hat{z}_j \equiv -h_d Y_d z \; (N) }} \rho_{\alpha \lambda} (\bsh_{\setv}).
		\end{align*}
		
		For $T_{2,1}$ we see that if $h_d \equiv 0 \; (b^{m-w_d})$ then $h_d Y_d z \equiv 0 \; (N)$. Thus
		\begin{align*}
			T_{2,1}
			&=
			\frac{1}{b^{m-w_d-1} (b-1)} \sum_{z \in \mathcal{Z}_{N,w_d}} \sum_{\substack{\emptyset \ne \setv\subseteq [s] \\ d \notin \setv}} \gamma_{\setv \cup \{d\}}^{\lambda} \sum_{\substack{h_d \in \Z_{*} \\ h_d \equiv 0 \; (b^{m-w_d})}} \rho_{\alpha \lambda} (h_d) \sum_{\substack{\bsh_{\setv} \in \Z_{*}^{|\setv|} \\ \sum_{j \in \setv} h_j \hat{z}_j \equiv 0 \; (N) }} \rho_{\alpha \lambda} (\bsh_{\setv}) \\
			&=
			\sum_{\substack{\emptyset \ne \setv\subseteq [s] \\ d \notin \setv}} \gamma_{\setv \cup \{d\}}^{\lambda} \sum_{h_d \in \Z_*} \rho_{\alpha \lambda} (b^{m-w_d} h_d) \sum_{\substack{\bsh_{\setv} \in \Z_{*}^{|\setv|} \\ \sum_{j \in \setv} 
			h_j \hat{z}_j \equiv 0 \; (N) }} \rho_{\alpha \lambda} (\bsh_{\setv}) \\
			&=
			\frac{2 \zeta(\alpha \lambda)}{(b^{m-w_d})^{\alpha \lambda}} \sum_{\substack{\emptyset \ne \setv\subseteq [s] \\ d \notin \setv}} \gamma_{\setv \cup \{d\}}^{\lambda} \sum_{\substack{\bsh_{\setv} \in \Z_{*}^{|\setv|} \\ \sum_{j \in \setv} 
			h_j \hat{z}_j \equiv 0 \; (N) }} \rho_{\alpha \lambda} (\bsh_{\setv}) \\
			&\le
			\frac{4 \zeta(\alpha \lambda)}{b^{m-w_d}} \sum_{\substack{\emptyset \ne \setv\subseteq [s] \\ d \notin \setv}} \gamma_{\setv \cup \{d\}}^{\lambda} \sum_{\substack{\bsh_{\setv} \in \Z_{*}^{|\setv|} \\ \sum_{j \in \setv} h_j \hat{z}_j \equiv 0 \; (N) }} \rho_{\alpha \lambda} (\bsh_{\setv}) .
		\end{align*}
		For $T_{2,2}$ we obtain
		\begin{eqnarray*}
			T_{2,2}
			&=&
			\frac{1}{|\calZ_{N,w_d}|} \sum_{z \in \mathcal{Z}_{N,w_d}} \sum_{\substack{\emptyset \ne \setv\subseteq [s] \\ d \notin \setv}} \gamma_{\setv \cup \{d\}}^{\lambda} \sum_{c=1}^{b^{m-w_d}-1} \sum_{\substack{h_d \in \Z_{*} \\ h_d \equiv -c z^{-1} \; (b^{m-w_d} )}} \rho_{\alpha \lambda} (h_d)
			\\
			& & \hspace{5.2cm}\times
			\sum_{\substack{\bsh_\setv\in \Z_{*}^{|\setv|} \\ \sum_{j\in \setv} h_j \hat{z}_j \equiv c Y_d \; (N)}} \rho_{\alpha \lambda} (\bsh_{\setv})
			,
		\end{eqnarray*}
		where $z^{-1}$ denotes the multiplicative inverse of $z$ in $\calZ_{N,w_d}$. For $c \in \{1,\ldots,b^{m-w_d}-1\}$ let $g:=\gcd(c,b^{m-w_d})$, then $\gcd\left(\tfrac{c}{g},\tfrac{b^{m-w_d}}{g}\right)=1$. Furthermore, note that 
		\begin{equation*}
			\{c \, z^{-1} \pmod{b^{m-w_d}} \ : \ z \in \calZ_{N,w_d} \}=\{c \, z \pmod{b^{m-w_d}} \ : \ z \in \calZ_{N,w_d} \}
			.
		\end{equation*}
		Hence, we obtain
		\begin{align*}
			&\sum_{z \in \mathcal{Z}_{N,w_d}} \sum_{\substack{h_d \in \Z_{*} \\ h_d \equiv -c z^{-1} \; (b^{m-w_d} )}} \rho_{\alpha \lambda} (h_d)
			= 
			\sum_{z \in \mathcal{Z}_{N,w_d}} \sum_{\substack{h_d \in \Z_{*} \\ h_d \equiv -c z \; (b^{m-w_d} )}} \frac{1}{|h_d|^{\alpha \lambda}} 
			\\
			&\quad= \sum_{z \in \mathcal{Z}_{N,w_d}} \sum_{k \in \ZZ} \frac{1}{|k b^{m-w_d} - c z|^{\alpha \lambda}}
			= 
			\frac{1}{g^{\alpha \lambda}} \sum_{z \in \mathcal{Z}_{N,w_d}} \sum_{k \in \ZZ} \frac{1}{|k (b^{m-w_d}/g)-(c/g) z|^{\alpha \lambda}}
			\\
			&\quad= \frac{1}{g^{\alpha \lambda}} \sum_{z \in \mathcal{Z}_{N,w_d}} \sum_{h \in \Z_{*} \atop h \equiv -(c/g) z \; (b^{m-w_d}/g) } 
			\frac{1}{|h|^{\alpha \lambda}}
			\le 
			\frac{g}{g^{\alpha \lambda}} \sum_{a=1}^{b^{m-w_d}/g -1} \sum_{h \in \Z_{*} \atop h \equiv a \; (b^{m-w_d}/g) } \frac{1}{|h|^{\alpha \lambda}}
		\end{align*}	
		and furthermore we have
		\begin{align*}
			&\sum_{a=1}^{b^{m-w_d}/g -1} \sum_{h \in \Z_{*} \atop h \equiv a \; (b^{m-w_d}/g) } \frac{1}{|h|^{\alpha \lambda}}
			= 
			\sum_{a=0}^{b^{m-w_d}/g -1} \sum_{h \in \Z_{*} \atop h \equiv a \; (b^{m-w_d}/g) } \frac{1}{|h|^{\alpha \lambda}} 
			- \sum_{h \in \Z_{*} \atop h \equiv 0 \; (b^{m-w_d}/g) } \frac{1}{|h|^{\alpha \lambda}}
			\\
			&\quad= 
			\sum_{h \in \Z_{*}} \frac{1}{|h|^{\alpha \lambda}} - \sum_{h \in \Z_{*}} \frac{1}{|h b^{m-w_d}/g|^{\alpha \lambda}}
			= 
			2 \zeta(\alpha \lambda) - \left(\frac{g}{b^{m-w_d}}\right)^{\alpha \lambda} 2 \zeta(\alpha \lambda)
			\le 
			2 \zeta(\alpha \lambda) 
			.
		\end{align*}
		Consequently, since $g \ge 1$ and $\lambda > 1/\alpha$, we have that
		\begin{equation*}
			\sum_{z \in \mathcal{Z}_{N,w_d}} \sum_{\substack{h_d \in \Z_{*} \\ h_d \equiv -c z^{-1} \; (b^{m-w_d} )}} \rho_{\alpha \lambda} (h_d)
			\le 
			\frac{g}{g^{\alpha \lambda}} \, 2 \zeta(\alpha \lambda) \le 2 \zeta(\alpha \lambda)
			,
		\end{equation*}
		from which it follows that 
		\begin{align*}
			T_{2,2}
			&\le
			\frac{2 \zeta(\alpha \lambda)}{|\calZ_{N,w_d}|} \sum_{\substack{\emptyset \ne \setv\subseteq [s] \\ d \notin \setv}} \gamma_{\setv \cup \{d\}}^{\lambda} \sum_{c=1}^{b^{m-w_d}-1} \sum_{\substack{\bsh_\setv\in \Z_{*}^{|\setv|} \\ \sum_{j\in \setv} h_j \hat{z}_j \equiv c Y_d \; (N)}} \rho_{\alpha \lambda} (\bsh_{\setv})
			\\
			&\le
			\frac{2 \zeta(\alpha \lambda)}{|\calZ_{N,w_d}|} \sum_{\substack{\emptyset \ne \setv\subseteq [s] \\ d \notin \setv}} \gamma_{\setv \cup \{d\}}^{\lambda} \sum_{\substack{\bsh_\setv\in \Z_{*}^{|\setv|} \\ \sum_{j\in \setv} h_j \hat{z}_j \not\equiv 0 \; (N)}} \rho_{\alpha \lambda} (\bsh_{\setv})
			.
		\end{align*}
		Remember that $| \calZ_{N,w_d}| = b^{m-w_d-1} (b-1) \ge b^{m-w_d}/2$. Therefore we obtain
		\begin{align*}
			T_{2,2} 
			&\le
			\frac{4 \zeta(\alpha \lambda)}{b^{m-w_d}} \sum_{\substack{\emptyset \ne \setv\subseteq [s] \\ d \notin \setv}} \gamma_{\setv \cup \{d\}}^{\lambda} \sum_{\substack{\bsh_\setv\in \Z_{*}^{|\setv|} \\ \sum_{j\in \setv} h_j \hat{z}_j \not\equiv 0 \; (N)}} \rho_{\alpha \lambda} (\bsh_{\setv})
			\\
			&=
			\frac{4 \zeta(\alpha \lambda)}{b^{m-w_d}} \sum_{\substack{\emptyset \ne \setv\subseteq [s] \\ d \notin \setv}} \gamma_{\setv \cup \{d\}}^{\lambda} \left( \sum_{\bsh_\setv\in \Z_{*}^{|\setv|}} \rho_{\alpha \lambda} (\bsh_{\setv})
			- \sum_{\substack{\bsh_\setv\in \Z_{*}^{|\setv|} \\ \sum_{j\in \setv} h_j \hat{z}_j \equiv 0 \; (N)}} \rho_{\alpha \lambda} (\bsh_{\setv}) \right)
			.
		\end{align*}
		Thus, we obtain for $T_2$ that
		\begin{align*}
			T_2 
			&=
			T_{2,1} + T_{2,2}
			\le
			\frac{4 \zeta(\alpha \lambda)}{b^{m-w_d}} \sum_{\substack{\emptyset \ne \setv\subseteq [s] \\ d \notin \setv}} \gamma_{\setv \cup \{d\}}^{\lambda} \sum_{\bsh_{\setv} \in \Z_{*}^{|\setv|}} \rho_{\alpha \lambda} (\bsh_{\setv}) \\
			&=
			\sum_{\substack{\emptyset \ne \setv\subseteq [s] \\ d \notin \setv}} \gamma_{\setv \cup \{d\}}^{\lambda} 
			\frac{2 (2 \zeta(\alpha \lambda))^{|\setv| + 1}}{b^{\max(0,m-w_d)}}
			=
			\sum_{\substack{\{d\} \ne \setu\subseteq [s] \\ d \in \setu}} \gamma_{\setu}^{\lambda} 
			\frac{2 (2 \zeta(\alpha \lambda))^{|\setu|}}{b^{\max(0,m-w_d)}}.
		\end{align*}
	\end{itemize}
	Combining both cases, $T_2$ is always bounded by
	\begin{align*}
	T_2 
	&\le
	\sum_{\substack{\{d\} \ne \setu\subseteq [s] \\ d \in \setu}} \gamma_{\setu}^{\lambda} 
	\frac{2 (2 \zeta(\alpha \lambda))^{|\setu|}}{b^{\max(0,m-w_d)}} .
	\end{align*}
	Hence, for the quantity $\theta^{\lambda}_d(\bsz^{(d)})$ we see that
	\begin{align*}
	\theta^{\lambda}_d(\bsz^{(d)})
	&\le
	T_1 + T_2
	\le
	\gamma_{\{d\}}^{\lambda} \frac{2 \zeta(\alpha \lambda)}{b^{\max(0,m-w_d)}} + \sum_{\substack{\{d\} \ne \setu\subseteq [s] \\ d \in \setu}} \gamma_{\setu}^{\lambda} \frac{2 (2 \zeta(\alpha \lambda))^{|\setu|}}{b^{\max(0,m-w_d)}}
	\le
	\sum_{d \in \setu\subseteq [s]} \gamma_{\setu}^{\lambda} \frac{2 (2 \zeta(\alpha \lambda))^{|\setu|}}{b^{\max(0,m-w_d)}},
	\end{align*}
	and so the squared worst-case error is bounded by
	\begin{align*}
	\left(e^2_{N,s}(\bsz)\right)^{\lambda} 
	&\le
	\sum_{d=1}^{s} \theta^{\lambda}_d(\bsz^{(d)})
	\le
	\sum_{d=1}^{s} \sum_{d \in \setu\subseteq [s]} \gamma_{\setu}^{\lambda} \frac{2 (2 \zeta(\alpha \lambda))^{|\setu|}}{b^{\max(0,m-w_d)}}
	\end{align*}
	which proves the claim. 
\end{proof}

\begin{corollary}\label{corlatticegenweights}
        Let the assumptions in Algorithm \ref{alg:lattice} hold.
        Let $\bsz = (Y_1 z_1,\ldots,Y_s z_s)$ be constructed by Algorithm \ref{alg:lattice}. 
	Then we have for all $\delta \in (0,\frac{\alpha-1}{2}]$ that
	\begin{align*}
		e_{N,s}(\bsz)
		\le
		C_{s,\alpha,\bsgamma,\delta} \; N^{-\alpha/2+\delta},
	\end{align*}
	where
	\begin{align*}
		C_{s,\alpha,\bsgamma,\delta}
		&:=
		\left(2\sum_{d=1}^{s} \sum_{d \in \setu\subseteq [s]} 
		\gamma_{\setu}^{\frac{1}{\alpha-2\delta}} 
		\left(2 \zeta\left(\frac{\alpha}{\alpha-2\delta}\right)\right)^{|\setu|} b^{w_d}\right)^{\alpha/2-\delta}.
	\end{align*}
        For $\delta\in (0,\frac{\alpha-1}{2}]$ and $q\ge 0$, define 
	\[
	 C_{\delta,q}:=\sup_{s\in\NN} \left[\frac{2}{s^q}\sum_{d=1}^{s}\sum_{d \in \setu\subseteq [s]} 
		\gamma_{\setu}^{\frac{1}{\alpha-2\delta}} \left(2 \zeta\left(\frac{\alpha}{\alpha-2\delta}\right)\right)^{|\setu|} b^{w_d} \right].
	\]
	If $C_{\delta,q}<\infty$ for some $\delta\in (0,\frac{\alpha-1}{2}]$ and $q\ge 0$ then
        \[
         e_{N,s}(\bsz)\le (s^q C_{\delta,q})^{\alpha/2-\delta} N^{-\alpha/2+\delta}.
        \]
	If $C_{\delta,0}<\infty$ for some $\delta\in (0,\frac{\alpha-1}{2}]$ then
        \[
         e_{N,s}(\bsz)\le (C_{\delta,0})^{\alpha/2-\delta} N^{-\alpha/2+\delta}.
        \]
\end{corollary}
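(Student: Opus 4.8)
The plan is to start from the estimate already established in Theorem~\ref{thm: reduced SCS} and to optimize the free parameter $\lambda$. Given $\delta\in(0,\frac{\alpha-1}{2}]$, I would choose $\lambda=\frac{1}{\alpha-2\delta}$. Since $\delta$ ranges over $(0,\frac{\alpha-1}{2}]$, we have $\alpha-2\delta\in[1,\alpha)$ and hence $\lambda\in(\frac1\alpha,1]$, which is exactly the admissible range in Theorem~\ref{thm: reduced SCS}; moreover $\frac1\lambda=\alpha-2\delta$ and $\alpha\lambda=\frac{\alpha}{\alpha-2\delta}$.

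Next I would clean up the exponent $\max(0,m-w_d)$ appearing in the denominators. Because $N=b^m$, for every $d$ we have $b^{\max(0,m-w_d)}\ge b^{m-w_d}=N/b^{w_d}=N/Y_d$, so that $b^{-\max(0,m-w_d)}\le b^{w_d}/N$. Inserting this into the bound of Theorem~\ref{thm: reduced SCS} gives
\[
\left(e^2_{N,s}(\bsz)\right)^{\lambda}
\le
\frac{1}{N}\,2\sum_{d=1}^{s}\sum_{d\in\setu\subseteq[s]}\gamma_{\setu}^{\lambda}(2\zeta(\alpha\lambda))^{|\setu|}b^{w_d}.
\]
Taking $\lambda$-th roots and then square roots, and using $\frac{1}{2\lambda}=\frac{\alpha}{2}-\delta$, we obtain
\[
e_{N,s}(\bsz)
\le
N^{-\alpha/2+\delta}\left(2\sum_{d=1}^{s}\sum_{d\in\setu\subseteq[s]}\gamma_{\setu}^{1/(\alpha-2\delta)}\left(2\zeta\!\left(\tfrac{\alpha}{\alpha-2\delta}\right)\right)^{|\setu|}b^{w_d}\right)^{\alpha/2-\delta},
\]
which is precisely the claimed inequality with constant $C_{s,\alpha,\bsgamma,\delta}$.

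For the dimension-explicit versions I would simply substitute the definition of $C_{\delta,q}$. If $C_{\delta,q}<\infty$ for some $\delta$ and $q\ge0$, then by definition $2\sum_{d=1}^{s}\sum_{d\in\setu\subseteq[s]}\gamma_{\setu}^{1/(\alpha-2\delta)}(2\zeta(\frac{\alpha}{\alpha-2\delta}))^{|\setu|}b^{w_d}\le s^q C_{\delta,q}$ for every $s\in\NN$, hence $C_{s,\alpha,\bsgamma,\delta}\le(s^q C_{\delta,q})^{\alpha/2-\delta}$ and the first displayed bound in the corollary follows; the statement for $q=0$ is the special case $s^0=1$.

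I do not expect a genuine obstacle here, as the argument is essentially bookkeeping built on top of Theorem~\ref{thm: reduced SCS}. The two points that require a little care are: first, verifying that the substitution $\lambda=1/(\alpha-2\delta)$ really keeps $\lambda$ inside $(\frac1\alpha,1]$, so that Theorem~\ref{thm: reduced SCS} may be applied; and second, correctly tracking the exponents through the two successive root extractions (the $\lambda$-th root coming from the Jensen step, followed by the square root that passes from $e^2_{N,s}$ to $e_{N,s}$), so that $N^{-1/(2\lambda)}$ indeed simplifies to $N^{-\alpha/2+\delta}$.
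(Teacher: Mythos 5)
Your proposal is correct and follows essentially the same route as the paper: apply Theorem~\ref{thm: reduced SCS}, bound $b^{-\max(0,m-w_d)}\le b^{w_d}/N$, and set $\lambda=1/(\alpha-2\delta)$ (which indeed lies in $(\frac1\alpha,1]$ for $\delta\in(0,\frac{\alpha-1}{2}]$) to obtain the stated constant, with the $C_{\delta,q}$ statements following by direct substitution. Your exponent bookkeeping, $N^{-1/(2\lambda)}=N^{-\alpha/2+\delta}$, matches the paper's computation exactly.
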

\begin{proof}
	By Theorem \ref{thm: reduced SCS} we have, for $\lambda \in (\frac{1}{\alpha},1]$, that
	\begin{align*}
	\left(e^2_{N,s}(\bsz)\right)^{\lambda} 
	&\le
	\sum_{d=1}^{s} \sum_{d \in \setu\subseteq [s]} \gamma_{\setu}^{\lambda} \frac{2 (2 \zeta(\alpha \lambda))^{|\setu|}}{b^{\max(0,m-w_d)}}
	\end{align*}
	and thus, since $b^{-\max(0,m-w_d)} = b^{\min(0,w_d-m)} = b^{-m} \, b^{\min(m,w_d)} \le b^{-m} \, b^{w_d} = \frac1N \, b^{w_d}$,
	\begin{align*}
	\left(e^2_{N,s}(\bsz)\right)^{\lambda} 
	&\le
	\sum_{d=1}^{s} \sum_{d \in \setu\subseteq [s]} \gamma_{\setu}^{\lambda} \frac{2 (2 \zeta(\alpha \lambda))^{|\setu|}}{b^{\max(0,m-w_d)}}
	\le
	\frac{2}{N} \sum_{d=1}^{s} \sum_{d \in \setu\subseteq [s]} \gamma_{\setu}^{\lambda} (2 \zeta(\alpha \lambda))^{|\setu|} b^{w_d}.
	\end{align*}
	Setting $\frac{1}{\lambda}=\alpha-2\delta$, this shows the first assertion in the corollary. The proof of the further assertions is straightforward.
\end{proof}

Let us, for the next corollary, assume that we have product weights, i.e., $\gamma_{\setu}=\prod_{j\in\setu} \gamma_j$ for $\setu\subseteq [s]$, 
where the $\gamma_j$ are elements of an infinite, non-increasing sequence of positive reals, $(\gamma_j)_{j\ge 1}$.

\begin{corollary}\label{corlatticeproduct}
	Let the assumptions in Algorithm \ref{alg:lattice} hold.
	Let $\bsz = (Y_1 z_1,\ldots,Y_s z_s)$ be constructed by Algorithm \ref{alg:lattice}. 
	Then we have for all $\delta \in (0,\frac{\alpha-1}{2}]$ that
	\begin{align*}
		e_{N,s}(\bsz)
		\le
		C_{s,\alpha,\bsgamma,\delta} \; N^{-\alpha/2 + \delta},
	\end{align*}
	where
	\begin{align*}
		C_{s,\alpha,\bsgamma,\delta}
		&:=
		\left( \left( \sum_{d=1}^{s} \gamma_d^{\frac{1}{\alpha - 2 \delta}} b^{w_d} \right) \left( 4 \zeta\left(\frac{\alpha}{\alpha - 2 \delta}\right) \right) \prod_{d=1}^{s-1} \left( 1 +  \gamma_d^{\frac{1}{\alpha - 2 \delta}} 2  \zeta\left(\frac{\alpha}{\alpha - 2 \delta}\right) \right) \right)^{\alpha/2 - \delta} .
	\end{align*}
	Furthermore, the constant $C_{s,\alpha,\bsgamma,\delta}$ is bounded independently of the dimension $s$ if
	\begin{align*}
	\sum_{d=1}^{\infty} \gamma_d^{\frac{1}{\alpha - 2 \delta}} \, b^{w_d} < \infty .
	\end{align*}
\end{corollary}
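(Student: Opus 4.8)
The plan is to derive the corollary directly from Corollary~\ref{corlatticegenweights} by exploiting the product structure of the weights; no new estimate of the worst-case error is needed. Write $\lambda := \frac{1}{\alpha-2\delta}$, which lies in $(\frac1\alpha,1]$ precisely for $\delta\in(0,\frac{\alpha-1}{2}]$, so that Corollary~\ref{corlatticegenweights} applies and gives
\[
 C_{s,\alpha,\bsgamma,\delta}
 =\Bigl(2\sum_{d=1}^{s}\sum_{d\in\setu\subseteq[s]}\gamma_\setu^{\lambda}\,(2\zeta(\alpha\lambda))^{|\setu|}\,b^{w_d}\Bigr)^{\alpha/2-\delta}.
\]
For product weights $\gamma_\setu^{\lambda}=\prod_{j\in\setu}\gamma_j^{\lambda}$, so for fixed $d$ I would write each $\setu\ni d$ as $\setu=\{d\}\cup\setv$ with $\setv\subseteq[s]\setminus\{d\}$ and factor the inner sum as a product over the remaining coordinates:
\[
 \sum_{d\in\setu\subseteq[s]}\gamma_\setu^{\lambda}\,(2\zeta(\alpha\lambda))^{|\setu|}
 =\gamma_d^{\lambda}\,2\zeta(\alpha\lambda)\prod_{\substack{j=1\\ j\neq d}}^{s}\bigl(1+\gamma_j^{\lambda}\,2\zeta(\alpha\lambda)\bigr).
\]

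Next I would eliminate the dependence of this product on $d$. Since $(\gamma_j)_{j\ge1}$ is non-increasing (hence so is $(\gamma_j^{\lambda})_{j\ge1}$) and each factor $1+\gamma_j^{\lambda}2\zeta(\alpha\lambda)$ is $\ge 1$, for $d<s$ replacing the index $s$ in the product by the (larger-weight) index $d$ only increases it; thus $\prod_{j\neq d}(1+\gamma_j^{\lambda}2\zeta(\alpha\lambda))\le\prod_{j=1}^{s-1}(1+\gamma_j^{\lambda}2\zeta(\alpha\lambda))$ for every $d\in[s]$. Pulling this uniform bound out of the sum over $d$ gives
\[
 \sum_{d=1}^{s}\sum_{d\in\setu\subseteq[s]}\gamma_\setu^{\lambda}(2\zeta(\alpha\lambda))^{|\setu|}b^{w_d}
 \le\Bigl(\sum_{d=1}^{s}\gamma_d^{\lambda}b^{w_d}\Bigr)\,2\zeta(\alpha\lambda)\prod_{d=1}^{s-1}\bigl(1+\gamma_d^{\lambda}2\zeta(\alpha\lambda)\bigr),
\]
and multiplying by the outer factor $2$ turns $2\zeta(\alpha\lambda)$ into $4\zeta(\alpha\lambda)$. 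Raising to the power $\alpha/2-\delta=\frac{1}{2\lambda}$ and substituting $\lambda=\frac{1}{\alpha-2\delta}$ produces exactly the asserted expression for $C_{s,\alpha,\bsgamma,\delta}$.

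Finally, for the dimension-independence claim I would note that $b^{w_d}\ge1$, so the hypothesis $\sum_{d=1}^{\infty}\gamma_d^{\lambda}b^{w_d}<\infty$ bounds $\sum_{d=1}^{s}\gamma_d^{\lambda}b^{w_d}$ by an $s$-independent constant and also forces $\sum_{d=1}^{\infty}\gamma_d^{\lambda}<\infty$; then $1+x\le e^x$ yields $\prod_{d=1}^{s-1}(1+\gamma_d^{\lambda}2\zeta(\alpha\lambda))\le\exp\!\bigl(2\zeta(\alpha\lambda)\sum_{d=1}^{\infty}\gamma_d^{\lambda}\bigr)<\infty$, uniformly in $s$, so $C_{s,\alpha,\bsgamma,\delta}$ remains bounded as $s\to\infty$. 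The only step that is not entirely mechanical is the uniform bound on the $d$-dependent product, and that follows immediately from the monotonicity of $(\gamma_j)$; so I do not anticipate a genuine obstacle, the rest being routine rearrangement.
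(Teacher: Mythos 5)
Your proposal is correct and follows essentially the same route as the paper: starting from the general-weights bound of Corollary \ref{corlatticegenweights}, factoring $\sum_{d\in\setu\subseteq[s]}\gamma_\setu^{\lambda}(2\zeta(\alpha\lambda))^{|\setu|}$ into $\gamma_d^{\lambda}\,2\zeta(\alpha\lambda)\prod_{j\neq d}(1+\gamma_j^{\lambda}2\zeta(\alpha\lambda))$, using the monotonicity of the weights to bound this product uniformly in $d$ by $\prod_{j=1}^{s-1}(1+\gamma_j^{\lambda}2\zeta(\alpha\lambda))$, and then invoking $1+x\le e^x$ (the paper's $\log(1+x)\le x$) together with $b^{w_d}\ge 1$ for the dimension-independence claim. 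No gaps; your explicit justification of the uniform bound on the $d$-dependent product is the same monotonicity fact the paper uses implicitly in its $\max_{d}$ step.
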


\begin{proof}
	Similar to the proof of Corollary \ref{corlatticegenweights}, we see that
	\begin{align*}
	 \left(e^2_{N,s}(\bsz)\right)^{\lambda} 
	&\le
	\frac{2}{N} \sum_{d=1}^{s} \sum_{d \in \setu\subseteq [s]} \gamma_{\setu}^{\lambda} (2 \zeta(\alpha \lambda))^{|\setu|} b^{w_d}.
	\end{align*}
	Thus,	
	\begin{align*}
	\left(e^2_{N,s}(\bsz)\right)^{\lambda} 
	&\le
	\frac2N \sum_{d=1}^{s} \left( \sum_{\setu \subseteq [s] \setminus\{d\}} \gamma_{\setu}^{\lambda} (2 \zeta(\alpha \lambda))^{|\setu|} \right) \left(\gamma_d^{\lambda} 2 \, \zeta(\alpha \lambda) \, b^{w_d} \right) \\
	&\le
	\frac2N \sum_{d=1}^{s} \left(\gamma_d^{\lambda} \, b^{w_d} \right) (2 \, \zeta(\alpha \lambda)) \max_{d = 1,\ldots,s} \left( \sum_{\setu \subseteq [s] \setminus\{d\}} \gamma_{\setu}^{\lambda} (2 \zeta(\alpha \lambda))^{|\setu|} \right) \\
	&=
	\frac2N \sum_{d=1}^{s} \left(\gamma_d^{\lambda} \, b^{w_d} \right) (2 \, \zeta(\alpha \lambda)) \max_{d = 1,\ldots,s} \left( \prod_{\substack{j=1 \\ j \ne d}}^{s} \left(1 + \gamma_j^{\lambda} 2 \zeta(\alpha \lambda) \right) \right) \\
	&=
	\frac2N \sum_{d=1}^{s} \left(\gamma_d^{\lambda} \, b^{w_d} \right) (2 \, \zeta(\alpha \lambda))\prod_{j=1}^{s-1} \left(1 + \gamma_j^{\lambda} 2 \zeta(\alpha \lambda) \right) .
	\end{align*}
	Hence we have that 
	\begin{align*}
		e_{N,s}(\bsz) 
		&\le
		\left(\frac1N\right)^{\frac{1}{2\lambda}} \left(\sum_{d=1}^{s} \left(\gamma_d^{\lambda} \, b^{w_d} \right) (4 \, \zeta(\alpha \lambda))\prod_{j=1}^{s-1} 
		\left(1 + \gamma_j^{\lambda} 2 \zeta(\alpha \lambda) \right)\right)^{\frac{1}{2\lambda}},
	\end{align*}
	and setting $\frac1\lambda = \alpha - 2 \delta$ this gives
	\begin{align*}
	e_{N,s}(\bsz) 
	&\le
	N^{-\frac\alpha2 + \delta} \left(\sum_{d=1}^{s} \left(\gamma_d^{\frac{1}{\alpha - 2 \delta}} \, b^{w_d} \right) \left(4 \zeta\left(\frac{\alpha}{\alpha - 2 \delta}\right)\right) \prod_{j=1}^{s-1} \left(1 + \gamma_j^{\frac{1}{\alpha - 2 \delta}} 2 \zeta\left(\frac{\alpha}{\alpha - 2 \delta}\right) \right)\right)^{\alpha/2 - \delta} \\
	&=
	C_{s,\alpha,\bsgamma,\delta} \; N^{-\frac\alpha2 + \delta} .
	\end{align*}
	Furthermore, note that since
	\begin{align*}
		\prod_{j=1}^{s-1} \left(1 + \gamma_j^{\frac{1}{\alpha - 2 \delta}} 2 \zeta\left(\frac{\alpha}{\alpha - 2 \delta}\right) \right) =
		\exp\left(\log \left(\prod_{j=1}^{s-1} \left(1 + \gamma_j^{\frac{1}{\alpha - 2 \delta}} 2 \zeta\left(\frac{\alpha}{\alpha - 2 \delta}\right) \right)\right)\right),
	\end{align*}
	and (as $\log(1+x) \le x$)
	\begin{align*}
		\log \left(\prod_{j=1}^{s-1} \left(1 + \gamma_j^{\frac{1}{\alpha - 2 \delta}} 2 \zeta\left(\frac{\alpha}{\alpha - 2 \delta}\right) \right)\right)
		&=
		\sum_{d=1}^{s-1} \log \left(1 + \gamma_j^{\frac{1}{\alpha - 2 \delta}} 2 \zeta\left(\frac{\alpha}{\alpha - 2 \delta}\right) \right) \\
		&\le
		2 \zeta\left(\frac{\alpha}{\alpha - 2 \delta}\right) \sum_{d=1}^{s-1} \gamma_j^{\frac{1}{\alpha - 2 \delta}} \\
		&\le
		2 \zeta\left(\frac{\alpha}{\alpha - 2 \delta}\right) \sum_{d=1}^{\infty} \gamma_j^{\frac{1}{\alpha - 2 \delta}} b^{w_d} ,
	\end{align*}
	the constant $C_{s,\alpha,\bsgamma,\delta}$ is finite, and therefore bounded independently of the dimension $s$, if
	\begin{align*}
		\sum_{d=1}^{\infty} \gamma_j^{\frac{1}{\alpha - 2 \delta}} b^{w_d} < \infty .
	\end{align*}
\end{proof}

A straightforward but important consequence of Algorithm \ref{alg:lattice} and Theorem \ref{thm: reduced SCS} is that we obtain a generalization of one 
of the main results in \cite{ELN18}. In that paper, the (unreduced) SCS algorithm was considered for prime $N$ 
and for product weights. The following theorem generalizes this result to prime powers $N$ and to arbitrary weights.

\begin{theorem} \label{thm:SCSgeneral}
     Let $N=b^m$ be a prime power, let $\gamma_{\setu}$, $\setu\subseteq [s]$, be general weights, and let the worst-case error 
     $e_{N,s}$ in the weighted Korobov space $\calH (K_{s,\alpha,\bsgamma})$ be defined as in Section \ref{secKor}.
     Let $\bsz^{0}\in\{0,1,\ldots,N-1\}^s$ be an arbitrary initial vector. Then Algorithm \ref{alg:lattice} applied with $w_1=\cdots=w_s=0$
	constructs $\bsz = (z_1, \ldots, z_s)$ such that, for $\lambda \in (\frac{1}{\alpha},1]$,
	the squared worst-case error $e^2_{N,s}(\bsz)$ satisfies
	\begin{align*}
		e^2_{N,s}((z_1, \ldots, z_s))
		&\le
		\left( \sum_{\emptyset \ne \setu \subseteq  [s]} \abs{\setu} \gamma_{\setu}^{\, \lambda} \frac{2 (2 \zeta(\alpha \lambda))^{|\setu|}}{b^{m}} \right)^{\frac{1}{\lambda}}
		.
	\end{align*}
	In particular, $e_{N,s} (\bsz)\in \mathcal{O} (N^{-\alpha/2+\delta})$ for $\delta$ arbitrarily close to zero, where the implied constant is independent 
	of $s$ if
	\[
	 \sup_{s\in\NN} \left[2 \sum_{\emptyset \ne \setu\subseteq [s]} \abs{\setu}
	 \gamma_{\setu}^{\frac{1}{\alpha-2\delta}} \left(2 \zeta\left(\frac{\alpha}{\alpha-2\delta}\right)\right)^{|\setu|}  \right]
	 .
	\]
\end{theorem}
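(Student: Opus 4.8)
The plan is to derive Theorem~\ref{thm:SCSgeneral} as the special case $w_1 = \cdots = w_s = 0$ of Theorem~\ref{thm: reduced SCS} together with Corollary~\ref{corlatticegenweights}; no genuinely new argument is needed, so the write-up is essentially a matter of specializing notation.

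First I would note that setting all $w_j = 0$ forces $Y_j = b^{w_j} = 1$ for every $j \in [s]$, so the vector produced by Algorithm~\ref{alg:lattice} is simply $\bsz = (z_1,\ldots,z_s)$, and that (since $N = b^m > 1$, i.e.\ $0 = w_j < m$) the reduced search space collapses to $\mathcal{Z}_{N,0} = \{z \in \{1,\ldots,b^m-1\} : \gcd(z,N) = 1\}$, the full group of units modulo $N$. In the prime case $N = b$, $m = 1$, this is $\{1,\ldots,b-1\}$, which coincides with the search space used in the SCS algorithm of~\cite{ELN18}; together with the fact that the quality criterion there is the product-weight special case of $e_{N,s}$, this is what makes the present algorithm a genuine generalization of the construction in~\cite{ELN18} rather than a mere relabelling.

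Next I would invoke Theorem~\ref{thm: reduced SCS} directly: with $w_d = 0$ we have $\max(0, m - w_d) = m$, hence $b^{\max(0, m - w_d)} = b^m$ for every $d \in [s]$, and the bound of Theorem~\ref{thm: reduced SCS} specializes verbatim to
\begin{align*}
 e^2_{N,s}((z_1,\ldots,z_s)) \le \left( \sum_{d=1}^{s} \sum_{d \in \setu \subseteq [s]} \gamma_{\setu}^{\,\lambda} \frac{2 (2 \zeta(\alpha \lambda))^{|\setu|}}{b^m} \right)^{\frac{1}{\lambda}}
\end{align*}
for all $\lambda \in (\frac{1}{\alpha}, 1]$, which is the first assertion.

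For the ``in particular'' part I would apply Corollary~\ref{corlatticegenweights} with $w_d = 0$, so that $b^{w_d} = 1$ and the quantity $C_{\delta,0}$ occurring there reduces to exactly the constant $C_\delta$ defined in the statement of the theorem. The corollary then gives $e_{N,s}(\bsz) \le (C_\delta)^{\alpha/2-\delta} N^{-\alpha/2+\delta}$ whenever $C_\delta < \infty$, with implied constant independent of $s$; letting $\delta \downarrow 0$ yields the convergence order $\mathcal{O}(N^{-\alpha/2+\delta})$ for $\delta$ arbitrarily close to zero. I do not expect any real obstacle here: the only points deserving explicit mention are the identifications $Y_j = 1$ and $\max(0,m-w_j) = m$, and the observation that the collapsed search space agrees with the one in~\cite{ELN18}.
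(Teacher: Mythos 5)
Your proposal is correct and matches the paper's own argument, which likewise obtains Theorem~\ref{thm:SCSgeneral} as the immediate specialization $w_1=\cdots=w_s=0$ of Theorem~\ref{thm: reduced SCS} (giving $Y_j=1$ and $b^{\max(0,m-w_d)}=b^m$) combined with Corollary~\ref{corlatticegenweights}, where $C_{\delta,0}$ reduces to $C_\delta$. Your write-up simply makes these identifications more explicit than the paper does.
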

\begin{proof}
 The result follows immediately by considering Algorithm \ref{alg:lattice}, Theorem \ref{thm: reduced SCS}, 
 and Corollary \ref{corlatticegenweights} for the special case 
 $w_1=w_2=\cdots =w_d=0$. 
\end{proof}

\section{Fast SCS construction for product weights}	\label{secFastscs}

For product weights $\gamma_{\setu} = \prod_{j \in \setu} \gamma_j$ and $\bsxi=(\xi_1,\ldots,\xi_s) \in \{0,1,\ldots,N-1\}^s$, 
the squared worst-case error can be written as
\begin{align*}
	e^2_{N,s}(\bsxi) = -1 + \frac1N \sum_{k=0}^{N-1} \prod_{j=1}^{s} \left( 1 + \gamma_j \, \omega\left( \left\{ \frac{k \xi_j}{N} \right\} \right) \right) ,
\end{align*}
where $\omega$ is a real-valued function satisfying $\omega(x) = \omega(1-x)$ for $x\in [0,1]$, cf., e.g., \cite{KJ02} and \cite{K03}. For our purposes, we assume that the function $\omega$ can be evaluated in $N$ distinct arguments at a cost of at most $\mathcal{O}(N \log N)$; 
this assumption is justified for the setting studied in this paper, see, e.g., \cite{LP14}. Now, for one step of the reduced SCS algorithm with $w_d < m$, we need to find a component $z_d \in \mathcal{Z}_{N,w_d}$ such that $e^2_{N,s}((Y_1 z_1, \ldots, Y_{d-1} z_{d-1}, Y_d z_d, z_{d+1}^0, 
\ldots, z_s^0))$ is minimized as a function of $z_d$.
This is obviously equivalent to minimizing 
\begin{align*}
	\sum_{k=0}^{N-1} \left( 1 + \gamma_d \, \omega\left( \left\{ \frac{k Y_d z_d}{N} \right\} \right) \right) q_d(k)
	&=
	\sum_{k=0}^{N-1} q_d(k) + \gamma_d \sum_{k=0}^{N-1} \omega\left( \left\{ \frac{k Y_d z_d}{N} \right\} \right) q_d(k)
\end{align*}
as a function of $z_d$, where $Y_d = b^{w_d}$ and
\begin{align*}
	q_d(k) 
	&:=
	\left[\prod_{j=1}^{d-1} \left( 1 + \gamma_j \, \omega\left( \left\{ \frac{k Y_j z_j}{N} \right\} \right) \right)\right]
	\left[\prod_{j=d+1}^{s} \left( 1 + \gamma_j \, \omega\left( \left\{ \frac{k z_j^0}{N} \right\} \right) \right)\right]
	.
\end{align*}
Thus, the component $z_d$ is given by the $z \in \mathcal{Z}_{N,w_d}$ which minimizes
\begin{align*}
	T_d(z)
	&=
	\sum_{k=0}^{N-1} \omega\left( \frac{k b^{w_d} z \bmod N}{N} \right) q_d(k)
	.
\end{align*}
In the following we write $\ZZ_N$ to denote the set of integers $\{0,1,\ldots,N-1\}$. We note that $T_d(z)$ can be calculated simultaneously for all $z \in \mathcal{Z}_{N,w_d}$ as the matrix-vector product  
of the reduced matrix
\begin{align*}
	\Omega_{b^m,w}
	&:=
	\left[ \omega\left( \frac{k b^w z \bmod N}{N} \right) \right]_{\substack{z \in \mathcal{Z}_{N,w} \\ k \in \ZZ_N}}
	=
	\left[ \omega\left( \frac{k b^w z \bmod b^m}{b^m} \right) \right]_{\substack{z \in \mathcal{Z}_{N,w} \\ k \in \ZZ_{b^m}}}
\end{align*}
with $w = w_d$, and the vector $\bsq_d = (q_d(0),q_d(1),\ldots,q_d(N-1)) \in \RR^N$.

\subsection{The block-circulant structure of $\Omega_{b^m,w}$}

Due to the reduction of the search space from $\mathbb{U}_{b^m} = \{z \in \{1,2,\ldots,b^m-1\} : \gcd(z,b)=1\}$ to
\begin{align*}
	\mathcal{Z}_{N,w} 
	&=
	\{z \in \{1,2,\ldots,b^{m-w}-1\} : \gcd(z,b)=1 \}
	=
	\mathbb{U}_{b^{m-w}}
	,
\end{align*}
with $w < m$, the matrix $\Omega_{b^m,w}$ is of special block-circulant structure which allows a fast computation of the above matrix-vector product. The following two theorems, which will be shown in a combined proof, illustrate this structure for the cases $b\ne2$ and $b=2$, respectively.

In the following, for $t,r \ge 1$, we denote by $\langle \langle g \rangle \rangle_{b^r}$ the set $\{ g^i \bmod b^r \mid 0 \le i \le \frac{\varphi(b^r)}{2} - 1\}$,
and furthermore set $\bsone_{t} \otimes A$ and $\bsone_{t}^\top \otimes A$ as the vertical and horizontal stacking of $t$ instances of the matrix $A$, respectively.

\begin{theorem} \label{thm:matrix_structure_bnot2}
	For $b\ne2$, $w < m$, and $\omega:[0,1]\rightarrow\RR$ such that  $\omega(x) = \omega(1-x)$, the reduced matrix 
	\begin{align*}
	\Omega_{b^m,w}
	&:=
	\left[ \omega\left( \frac{k b^w z \bmod b^m}{b^m} \right) \right]_{\substack{z \in \mathcal{Z}_{N,w} \\ k \in \ZZ_{b^m}}}
	\end{align*}
	can, with respect to a generator $g$ of $\mathbb{U}_{b^m}$, be reordered to
	\begin{align*}
	\Omega^{\langle g \rangle}_{b^m,w}
	&:=
	\left[ 
	\mathbf{1}_{b^0} \otimes B^{\langle g \rangle}_{b^{m-w}} \mathrel{\Big|} 
	\mathbf{1}_{b^1} \otimes B^{\langle g \rangle}_{b^{m-w-1}} \mathrel{\Big|} \ldots \mathrel{\Big|}
	\mathbf{1}_{b^{m-w-1}} \otimes B^{\langle g \rangle}_{b^{1}} \mathrel{\Big|}
	\mathbf{1}_{b^w}^\top \otimes (\omega(0) \mathbf{1}_{\varphi(b^{m-w})})
	 \right]
	 ,
	\end{align*}
	where for $\ell \in \{w+1,w+2,\ldots,m\}$ and $r \in \{1,\ldots,m\}$ we define
	\begin{align*}	
	B^{\langle g \rangle}_{b^{\ell-w}}
	:=
	\left[ \bfrac{\mathbf{1}_{2 b^w}^\top \otimes M^{\langle g \rangle}_{b^{\ell-w}} }
	{\mathbf{1}_{2 b^w}^\top \otimes M^{\langle g \rangle}_{b^{\ell-w}} } \right] 
	\quad and \quad 
	M^{\langle g \rangle}_{b^{r}}
	:=
	\left[ \omega\left( \frac{k z \bmod b^r}{b^r}\right) \right]_
	{\substack{z \in \langle \langle g \rangle \rangle_{b^r} \\ k \in \langle \langle g^{-1} \rangle \rangle_{b^r}}}
	.
	\end{align*}
	Thus, $B^{\langle g \rangle}_{b^{\ell-w}}$, and therefore also $\Omega^{\langle g \rangle}_{b^m,w}$, consists of circulant blocks $M^{\langle g \rangle}_{b^{\ell-w}}$.
	
\end{theorem}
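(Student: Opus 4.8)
The plan is to establish the block-circulant structure of $\Omega_{b^m,w}$ by tracking how the reduction of the search space interacts with the cyclic structure of the group $\mathbb{U}_{b^m}$. First I would recall the standard fact (for $b\neq 2$) that $\mathbb{U}_{b^r}$ is cyclic of order $\varphi(b^r)$ for every $r\geq 1$, so that a fixed generator $g$ of $\mathbb{U}_{b^m}$ reduces modulo $b^r$ to a generator of $\mathbb{U}_{b^r}$. This lets us index both the rows ($z\in\mathcal{Z}_{N,w}=\mathbb{U}_{b^{m-w}}$) and the relevant part of the columns ($k\in\ZZ_{b^m}$) by powers of $g$ and $g^{-1}$, respectively, which is the key to converting multiplication into an additive (hence circulant) structure.

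The main step is a case analysis on $k\in\ZZ_{b^m}$ according to the exact power of $b$ dividing $k$. Write $k = b^j k'$ with $\gcd(k',b)=1$ and $0\leq j\leq m$ (the case $k=0$ handled separately, giving the final column block $\omega(0)$). Then $k b^w z \bmod b^m = b^{j+w}(k' z \bmod b^{m-w-j})$ when $j+w<m$, and the argument of $\omega$ is $(k'z \bmod b^{\ell-w})/b^{\ell-w}$ with $\ell=m-j$; when $j+w\geq m$ the argument collapses to $0$. So for each value of $j$ the corresponding columns of $\Omega_{b^m,w}$ depend on $z$ and $k'$ only through $k' z \bmod b^{\ell - w}$. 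Grouping the rows $z\in\mathbb{U}_{b^{m-w}}$ by their residue modulo $b^{\ell-w}$ (each residue class in $\mathbb{U}_{b^{\ell-w}}$ being hit equally often, i.e., $\varphi(b^{m-w})/\varphi(b^{\ell-w})$ times, explaining the $\mathbf{1}_{b^{\cdots}}\otimes$ stacking), and similarly grouping columns, the matrix entry depends only on a product in the cyclic group $\mathbb{U}_{b^{\ell-w}}$. Writing $z = g^a$, $k' = g^{-c}$ (using $g^{-1}$ as a generator for the column index set $\langle\langle g^{-1}\rangle\rangle_{b^r}$), the entry becomes $\omega(g^{a-c}\bmod b^r / b^r)$, a function of $a-c$ only, hence circulant; the symmetry $\omega(x)=\omega(1-x)$ together with $g^{\varphi(b^r)/2}\equiv -1\pmod{b^r}$ accounts for halving the index range to $\varphi(b^r)/2$ and for the vertical doubling $\left[\genfrac{}{}{0pt}{}{M}{M}\right]$ in the definition of $B^{\langle g\rangle}_{b^{\ell-w}}$. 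The extra tensor factor $\mathbf{1}_{2b^w}$ inside $B^{\langle g\rangle}_{b^{\ell-w}}$ records the $2b^w$ columns (for the two sign classes and the $b^w$ values of $j'$ not affecting the $\omega$-argument) that share the same entry pattern.

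The bookkeeping to verify that the reordering $\Omega^{\langle g\rangle}_{b^m,w}$ is exactly a permutation of rows and columns of $\Omega_{b^m,w}$ — i.e., that the block sizes $\mathbf{1}_{b^i}\otimes B^{\langle g\rangle}_{b^{m-w-i}}$ for $i=0,\ldots,m-w-1$ plus the final $\varphi(b^{m-w})\times b^w$ constant block account for all $b^m$ columns, and that the row grouping is consistent across all blocks simultaneously — is where I expect the real work to lie. One has to fix a single global ordering of the rows $z\in\mathbb{U}_{b^{m-w}}$ (namely by powers of $g$) and check that, for every column block, the induced grouping by residues mod $b^{\ell-w}$ respects that fixed ordering; this is true because reduction mod $b^{\ell-w}$ maps $g$-powers to $g$-powers in order. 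I would present the $b=2$ case in the same proof, noting only that $\mathbb{U}_{2^r}$ is not cyclic for $r\geq 3$ but is $\{\pm 1\}\times\langle g\rangle$ with $\langle g\rangle$ of order $2^{r-2}$, so $\langle\langle g\rangle\rangle_{2^r}$ plays the role of the cyclic part and an extra factor of $2$ appears in the stacking multiplicities; the circulant-block conclusion is unchanged. The hardest point is thus not any single computation but keeping the nested index identifications (power of $b$ in $k$, residue class of $z$, discrete logarithm base $g$, the sign/reflection from $\omega(x)=\omega(1-x)$) mutually consistent, which I would handle by introducing explicit bijections for the row and column index sets before computing any entries.
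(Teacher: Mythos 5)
Your proposal is correct, and it reaches the theorem by a genuinely more self-contained route than the paper. You rebuild the entire reordering from scratch: you split the columns $k\in\ZZ_{b^m}$ by the $b$-adic valuation $k=b^jk'$, observe that the entry reduces to $\omega\bigl((k'z \bmod b^{\ell-w})/b^{\ell-w}\bigr)$ with $\ell=m-j$ (and to $\omega(0)$ once $j+w\ge m$), and then obtain the circulant blocks $M^{\langle g\rangle}_{b^{\ell-w}}$ by discrete-logarithm indexing (rows by powers of $g$, columns by powers of $g^{-1}$), with $\omega(x)=\omega(1-x)$ and $g^{\varphi(b^r)/2}\equiv-1 \pmod{b^r}$ yielding the halved index range, the vertical doubling inside $B^{\langle g\rangle}_{b^{\ell-w}}$, and the $2b^w$-fold column repetition; the fiber counts $\varphi(b^{m-w})/\varphi(b^{\ell-w})=b^j$ and $\varphi(b^{\ell})/\varphi(b^{\ell-w})=b^w$ give exactly the stacking factors in the statement, and your insistence on one global row ordering by $g$-powers, compatible with reduction modulo every $b^{\ell-w}$, is indeed the point that makes the simultaneous reordering of all column blocks legitimate. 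The paper does not re-derive this divisor decomposition: it cites Theorems 4.2 and 4.3 of Cools--Kuo--Nuyens for the unreduced matrix $\Omega_{b^m,0}$ and notes that $\Omega_{b^m,w}$ arises from it by the substitution $k\mapsto kb^w$ together with restricting the rows to $\mathbb{U}_{b^{m-w}}$, so it only needs to compute how each circulant block $M^{\langle g\rangle}_{b^r}$ transforms under that substitution (becoming repeated copies of $M^{\langle g\rangle}_{b^{r-w}}$, degenerating to the constant $\omega(0)$ block when $w\ge r$, and to the $\omega(1/2)$ block in the case $b=2$, $w=r-1$). Your approach buys independence from the external reference and makes the origin of every tensor factor explicit; the paper's buys brevity and makes transparent that the reduced matrix inherits the structure exploited by the existing fast CBC/FFT machinery. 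Two minor presentational caveats: the phrase about ``the $b^w$ values of $j'$'' should be stated as the $b^w$ lifts of a residue class of $\mathbb{U}_{b^{\ell-w}}$ to $\mathbb{U}_{b^{\ell}}$, and your one-line treatment of $b=2$ omits the extra $\omega(1/2)$ column block appearing in the companion theorem for that case --- harmless here, since the statement under review assumes $b\ne 2$.
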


\begin{theorem} \label{thm:matrix_structure_b2}
	For $b=2$, $w < m$, and $\omega:[0,1]\rightarrow\RR$ such that  $\omega(x) = \omega(1-x)$, the reduced matrix 
	\begin{align*}
	\Omega_{2^m,w}
	&:=
	\left[ \omega\left( \frac{k 2^w z \bmod 2^m}{2^m} \right) \right]_{\substack{z \in \mathcal{Z}_{N,w} \\ k \in \ZZ_{2^m}}}
	\end{align*}
	can be reordered with respect to the divisors of $2^m$ and $g=5$ as
	\begin{align*}
	\Omega^{\langle g \rangle}_{2^m,w}
	&:=
	\left[ 
	\mathbf{1}_{2^0} \otimes B^{\langle g \rangle}_{2^{m-w}} \mathrel{\Big|}
	\mathbf{1}_{2^1} \otimes B^{\langle g \rangle}_{2^{m-w-1}} \mathrel{\Big|} \ldots \mathrel{\Big|}
	\mathbf{1}_{2^{m-w-2}} \otimes B^{\langle g \rangle}_{2^{2}} \right. \mathrel{\Big|} \\
	&\phantom{;= \left[ \right.} \left.
	\mathbf{1}_{2^w}^\top \otimes (\omega(1/2) \mathbf{1}_{2^{m-w-1}} ) \mathrel{\Big|}
	\mathbf{1}_{2^w}^\top \otimes (\omega(0) \mathbf{1}_{2^{m-w-1}})
	\right]
	,
	\end{align*}
	where for $\ell \in \{w+2,w+3,\ldots,m\}$ and $r \in \{1,\ldots,m\}$ we define
	\begin{align*}
	B^{\langle g \rangle}_{2^{\ell-w}}
	:=
	\left[ \bfrac{ \mathbf{1}_{2^{w+1}}^\top \otimes M^{\langle g \rangle}_{2^{\ell-w}} }
	{ \mathbf{1}_{2^{w+1}}^\top \otimes M^{\langle g \rangle}_{2^{\ell-w}} } \right] 
	\quad and \quad 
	M^{\langle g \rangle}_{2^{r}}
	:=
	\left[ \omega\left( \frac{k z \bmod 2^r}{2^r}\right) \right]_
	{\substack{z \in \langle \langle g \rangle \rangle_{2^r} \\ k \in \langle \langle g^{-1} \rangle \rangle_{2^r}}}
	.
	\end{align*}
	Thus, $B^{\langle g \rangle}_{2^{\ell-w}}$, and therefore also $\Omega^{\langle g \rangle}_{2^m,w}$, consists of circulant blocks $M^{\langle g \rangle}_{2^{\ell-w}}$.
\end{theorem}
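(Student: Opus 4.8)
\textbf{Proof idea (combined proof of Theorems~\ref{thm:matrix_structure_bnot2} and~\ref{thm:matrix_structure_b2}).}
The plan is to write down an explicit reordering of the rows and of the columns of $\Omega_{b^m,w}$ and to check, by analyzing the arithmetic of the entry $\omega\!\left((k b^w z \bmod b^m)/b^m\right)$, that it takes exactly the stated block-circulant form. I would run the two cases $b$ odd and $b=2$ in parallel, since the only genuine difference is the structure of the unit group $\mathbb{U}_{b^r}$ and the behaviour at small moduli.

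\emph{Column reduction.} Since $k b^w z \equiv k' b^w z \pmod{b^m}$ whenever $k\equiv k' \pmod{b^{m-w}}$, and since $\gcd(z,b)=1$ gives the converse, the entry depends on $k$ only through $k \bmod b^{m-w}$, and every residue class modulo $b^{m-w}$ is represented by exactly $b^w$ indices $k\in\ZZ_{b^m}$. This accounts for every horizontal repetition factor $b^w$ (for $b=2$, together with the extra ``$\pm$'' collapse below, for $2^{w+1}$) in the target matrices.

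\emph{Stratification by $b$-adic valuation.} Write $k\equiv b^j u \pmod{b^{m-w}}$ with $\gcd(u,b)=1$ and $0\le j\le m-w$. For $j=m-w$ (i.e.\ $k\equiv 0$) the entry is the constant $\omega(0)$; for $b=2$ and $j=m-w-1$ it is the constant $\omega(1/2)$, because $\mathbb{U}_{2}=\{1\}$; these give the trailing constant column blocks. Otherwise put $r=m-w-j$; a one-line computation gives $k b^w z \bmod b^m = b^{j+w}\,(uz\bmod b^r)$, so the entry equals $\omega\!\left((uz\bmod b^r)/b^r\right)$, an expression living entirely modulo $b^r=b^{m-w-j}$, with $u$ running over $\mathbb{U}_{b^r}$ and $z$ (reduced mod $b^r$) over $\mathbb{U}_{b^r}$.

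\emph{Diagonalisation of the $j$-th block.} For $b$ odd, $\mathbb{U}_{b^r}$ is cyclic of order $\varphi(b^r)$, generated by the reduction of $g$; for $b=2$ one uses $\mathbb{U}_{2^r}=\{\pm1\}\times\langle 5\rangle$. Writing $z\equiv g^{a}$ and $u\equiv g^{-c}$ (legitimate since $g^{-1}$ also generates the cyclic part), the entry becomes $F(a-c)$ with $F(t):=\omega\!\left((g^{t}\bmod b^r)/b^r\right)$, and the symmetry $\omega(x)=\omega(1-x)$ combined with $-1=g^{\varphi(b^r)/2}$ (resp.\ the explicit $\{\pm1\}$ factor when $b=2$) forces $F$ to have period $\varphi(b^r)/2$. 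Hence, after indexing rows by $\langle\langle g\rangle\rangle_{b^r}$ and columns by $\langle\langle g^{-1}\rangle\rangle_{b^r}$, the compressed $j$-th block is precisely the circulant matrix $M^{\langle g\rangle}_{b^r}$. Restoring multiplicities — the reduction $\mathbb{U}_{b^{m-w}}\to\mathbb{U}_{b^r}$ is $b^j$-to-one, the identifications $z\mapsto -z$ on rows and $u\mapsto -u$ on columns are two-to-one each, and each residue $k\bmod b^{m-w}$ has $b^w$ lifts — and ordering the $z$'s by their $g$-exponent turns the $j$-th block into $\mathbf{1}_{2b^{j}}\otimes\!\left(M^{\langle g\rangle}_{b^{m-w-j}}\otimes\mathbf{1}_{2b^{w}}\right)=\mathbf{1}_{b^{j}}\otimes B^{\langle g\rangle}_{b^{m-w-j}}$ (with $\mathbf{1}_{2^{w+1}}$ in place of $\mathbf{1}_{2b^w}$ when $b=2$). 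Concatenating the blocks $j=0,1,\dots$ in increasing order of $j$, followed by the trailing constant block(s), reproduces $\Omega^{\langle g\rangle}_{b^m,w}$.

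\emph{Main obstacle.} The hard part will not be any single computation but the bookkeeping in the last step: exhibiting one fixed reordering of the rows (resp.\ columns) of $\Omega_{b^m,w}$ that is simultaneously compatible with all blocks $B^{\langle g\rangle}_{b^{m-w-j}}$, and verifying the counts — every block has height $\varphi(b^{m-w})=|\mathcal{Z}_{N,w}|$, and the widths $b^w\varphi(b^{m-w-j})$ together with the trailing block(s) sum (using $\sum_{r=1}^{n}\varphi(b^r)=b^{n}-1$) to $b^m=N$ — so that no row or column is lost or duplicated. The subsidiary obstacle is the case $b=2$: $\mathbb{U}_{2^r}$ is not cyclic, the moduli $r=1$ and $r=0$ degenerate (whence the two extra constant columns $\omega(1/2)$ and $\omega(0)$), and the two factors of two coming from the $\pm$-identifications must be split correctly between the symbol $\langle\langle\cdot\rangle\rangle$ and the explicit $\mathbf{1}_{2^{w+1}}$.
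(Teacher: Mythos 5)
Your proposal is essentially correct, but it follows a genuinely different route from the paper. The paper does not re-derive the block-circulant structure from scratch: it invokes Theorems 4.2 and 4.3 of Cools--Kuo--Nuyens \cite{CKN06}, which already give the reordering of the \emph{unreduced} matrix $\Omega_{b^m}=\Omega_{b^m,0}$ with respect to the divisors of $b^m$ into circulant blocks $M^{\langle g\rangle}_{b^r}$, and then observes that $\Omega_{b^m,w}$ is obtained from $\Omega_{b^m}$ by the substitution $k\mapsto kb^w$ and by keeping only the rows with $z\in\mathbb{U}_{b^{m-w}}$. The whole proof then consists of tracking what this substitution does to each block $M^{\langle g\rangle}_{b^r}$: for $w<r$ one gets $kz\bmod b^{r-w}$, the coset decomposition $\mathbb{U}_{b^r}=\langle\langle g\rangle\rangle_{b^r}\cup(-1)\langle\langle g\rangle\rangle_{b^r}$ together with $\omega(x)=\omega(1-x)$ turns the block into repeated copies of $M^{\langle g\rangle}_{b^{r-w}}$, and the degenerate cases ($b=2$ with $w=r-1$, and $w\ge r$) collapse to the constant blocks $\omega(1/2)$ and $\omega(0)$. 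You instead prove the structure directly: stratify the columns by the $b$-adic valuation of $k\bmod b^{m-w}$, reduce each stratum to an $\mathbb{U}_{b^r}\times\mathbb{U}_{b^r}$ core, and diagonalise it via the generator $g$ (respectively $\{\pm1\}\times\langle 5\rangle$ for $b=2$). Your individual computations are correct, including the key identity $kb^wz\bmod b^m=b^{j+w}(uz\bmod b^r)$ and the period-$\varphi(b^r)/2$ argument, and the dimension counts work out. What the paper's route buys is precisely that the hardest part of your plan --- exhibiting one global row and column ordering that is simultaneously compatible with all blocks, which you correctly flag as the main obstacle and do not carry out --- is imported from \cite{CKN06}, where the divisor-based reordering is already established; your route buys self-containedness and makes the origin of each block (including the two constant columns for $b=2$) transparent, at the cost of essentially reproving the Cools--Kuo--Nuyens structure theorem in the reduced setting. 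If you complete your version, note that the assembly step is unproblematic because all stacked copies are identical, so $\bigl[\begin{smallmatrix}\mathbf{1}_{b^j}\otimes A\\ \mathbf{1}_{b^j}\otimes A\end{smallmatrix}\bigr]=\mathbf{1}_{2b^j}\otimes A$, and that the single ordering by $z=\pm g^a$ works for every stratum because reduction modulo $b^r$ sends $g$ to a generator of (the cyclic part of) $\mathbb{U}_{b^r}$.
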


\begin{proof}
	To prove Theorems \ref{thm:matrix_structure_bnot2} and \ref{thm:matrix_structure_b2} consider Theorems 4.2 and 4.3 in \cite{CKN06}
	which show how the unreduced matrix $\Omega_{b^m} = \Omega_{b^m,0}$ can be reordered with respect to the divisors of $b^m$ based on the circulant
	matrices $M^{\langle g \rangle}_{b^{r}}$. Since the matrix $\Omega_{b^m,w}$ can be obtained from $\Omega_{b^m}$ by replacing $k$ by $k b^w$ and only using the rows for which $z \in \mathbb{U}_{b^{m-w}}$, the matrix $\Omega_{b^m,w}$ inherits the structure of $\Omega_{b^m}$. This becomes evident by considering the above substitution for the circulant matrices $M^{\langle g \rangle}_{b^{r}}$. For $0 \le w < r$ we obtain that
	\begin{align} \label{substitution_matrix}
		\left[ \omega\left( \frac{k z b^w \bmod b^r}{b^r}\right) \right]_
		{\substack{z \in \langle \langle g \rangle \rangle_{b^r} \\ k \in \langle \langle g^{-1} \rangle \rangle_{b^r}}}
		&=
		\left[ \omega\left( \frac{k z \bmod b^{r-w}}{b^{r-w}}\right) \right]_
		{\substack{z \in \langle \langle g \rangle \rangle_{b^r} \\ k \in \langle \langle g^{-1} \rangle \rangle_{b^{r}}}} .	
	\end{align}
	Next, note that for $b=2$ and $b \ne 2$ the set $\mathbb{U}_{b^r}$ can be written as 
	\begin{align*}
		\mathbb{U}_{b^r}
		&=
		\langle \langle g \rangle \rangle_{b^r} \cup (-1) \langle \langle g \rangle \rangle_{b^r} ,
	\end{align*}
	where $(-1) \langle \langle g \rangle \rangle_{b^r} = \{-g^i \bmod b^r \mid 0 \le i \le \frac{\varphi(b^r)}{2} - 1\}$. For $b\ne 2$
	this follows from the fact that for the cyclic group $\mathbb{U}_{b^r}$ with generator $g$ we always have that $-1 \equiv g^{\varphi(b^r)/2}$.
	Hence, coming back to Equation \eqref{substitution_matrix}, we see that the two variables $z$ and $k$ iterate through the sets
	\begin{align*}
		\underbrace{
		\langle \langle a \rangle \rangle_{b^{r-w}} \cup (-1) \langle \langle a \rangle \rangle_{b^{r-w}} \cup \langle \langle a \rangle \rangle_{b^{r-w}} 
		\cup \ldots \cup \langle \langle a \rangle \rangle_{b^{r-w}} \cup (-1) \langle \langle a \rangle \rangle_{b^{r-w}}
		}_{\text{\footnotesize $b^w$ times}} 
	\end{align*}   
	for $a=g$ and $a=g^{-1}$, respectively.
	Thus, for $0 \le w \le r-2$, the matrices $M^{\langle g \rangle}_{b^{r}}$ with respect to the substitution $\tilde{k} = k b^w$ are given by
	\begin{align*}
		M^{\langle g \rangle}_{b^{r}}
		&=
		\left[ \omega\left( \frac{k z b^w \bmod b^r}{b^r}\right) \right]_
		{\substack{z \in \langle \langle g \rangle \rangle_{b^r} \\ k \in \langle \langle g^{-1} \rangle \rangle_{b^r}}}
		=
		\left[ \omega\left( \frac{k z \bmod b^{r-w}}{b^{r-w}}\right) \right]_
		{\substack{z \in \langle \langle g \rangle \rangle_{b^r} \\ k \in \langle \langle g^{-1} \rangle \rangle_{b^{r}}}} \\
		&=
		\underbrace{\left.
		\begin{bmatrix} 
		M^{\langle g \rangle}_{b^{r-w}} & M^{\langle g \rangle}_{b^{r-w}} & \cdots & M^{\langle g \rangle}_{b^{r-w}} \\
		M^{\langle g \rangle}_{b^{r-w}} & M^{\langle g \rangle}_{b^{r-w}} & \cdots &M^{\langle g \rangle}_{b^{r-w}} \\
		\vdots          & \vdots          & \ddots &\vdots \\
		M^{\langle g \rangle}_{b^{r-w}} & M^{\langle g \rangle}_{b^{r-w}} & \cdots &M^{\langle g \rangle}_{b^{r-w}} 
		\end{bmatrix}
		\right\}}_{\text{\footnotesize $b^w$ times}} \text{\footnotesize $b^w$ times}
		=
		\left.
		\begin{bmatrix} 
		\mathbf{1}_{b^{w}}^\top \otimes M^{\langle g \rangle}_{b^{r-w}} \\
		\mathbf{1}_{b^{w}}^\top \otimes M^{\langle g \rangle}_{b^{r-w}} \\
		\vdots \\
		\mathbf{1}_{b^{w}}^\top \otimes M^{\langle g \rangle}_{b^{r-w}}
		\end{bmatrix}
		\right\}\text{\footnotesize $b^w\,$times}
		,
	\end{align*}
	where the penultimate equality follows through the reasoning above and since $\omega(x) = \omega(1-x)$. The same statement holds true for
	$w = r-1$ and $b \ne 2$. For the case $b=2$ and $w = r-1$ we obtain a special case since then the above substitution yields
	\begin{align*}
		M^{\langle g \rangle}_{2^{r}}
		&=
		\left[ \omega\left( \frac{k z 2^{r-1} \bmod 2^r}{2^r}\right) \right]_
		{\substack{z \in \langle \langle g \rangle \rangle_{2^r} \\ k \in \langle \langle g^{-1} \rangle \rangle_{2^r}}}
		=
		\left[ \omega\left( \frac{k z \bmod 2}{2}\right) \right]_
		{\substack{z \in \langle \langle g \rangle \rangle_{2^r} \\ k \in \langle \langle g^{-1} \rangle \rangle_{2^r}}} \\
		&=
		\omega(1/2) \begin{bmatrix} 1 & 1 & \cdots & 1 \\ 1 & 1 & \cdots & 1 \\ \vdots & \vdots & \ddots &\vdots \\ 1 & 1 & \cdots &1 \end{bmatrix}
		\in \RR^{2^{r-2} \times 2^{r-2}} .
	\end{align*}
	For $w \ge r$ the matrix $M^{\langle g \rangle}_{b^{r}}$ reduces to
	\begin{align*}
	M^{\langle g \rangle}_{b^{r}}
	&=
	\left[ \omega\left( \frac{k z b^{w} \bmod b^r}{b^r}\right) \right]_
	{\substack{z \in \langle \langle g \rangle \rangle_{b^r} \\ k \in \langle \langle g^{-1} \rangle \rangle_{b^r}}}
	=
	\omega(0) \begin{bmatrix} 1 & 1 & \cdots & 1 \\ 1 & 1 & \cdots & 1 \\ \vdots & \vdots & \ddots &\vdots \\ 1 & 1 & \cdots &1 \end{bmatrix}
	\in \RR^{\frac{\varphi(b^r)}{2} \times \frac{\varphi(b^r)}{2}} .
	\end{align*} 
	For the special case $b=2$ there occurs the additional term $B_{2^1}^{\langle g \rangle} = [\omega(1/2)]$ in Thm. 4.3 of \cite{CKN06}, 
	however, for $w \ge 1$ and the substitution $\tilde{k} = k b^w$ this results in $\omega(0)$. Now, using the theorems in \cite{CKN06} 
	and putting all derived cases together we obtain the structure of $\Omega_{b^m,w}^{\langle g \rangle}$ as given in Theorems \ref{thm:matrix_structure_bnot2} and \ref{thm:matrix_structure_b2}. 
\end{proof}

Theorems \ref{thm:matrix_structure_bnot2} and \ref{thm:matrix_structure_b2} reveal that, due to the repetitive structure of the matrix
$\Omega_{b^m,w}$, in order to calculate $T_d(z)$ for all $z \in \calZ_{N,w_d}$, it is sufficient to calculate a matrix-vector product
with the smaller matrix
\begin{equation*}
	\Omega_{b^{m-w}}
	:=
	\left[ \omega\left( \frac{k z \bmod b^{m-w}}{b^{m-w}} \right) \right]_{\substack{z \in \calZ_{N,w} \\ k \in \Z_{b^{m-w}}}}
	,
\end{equation*}
where $w=w_d$. Based on this observation, we will formulate a fast version of Algorithm \ref{alg:lattice} in the next section.

\subsection{Computational complexity of the reduced SCS construction}

Firstly, denote by $s^{\ast}$ the largest integer such that $w_{s^{\ast}} < m$. In order to achieve a low 
computational complexity, we consider initial vectors $\bsz^0$ of the form
\begin{align} \label{form_initial_vector}
	\bsz^0
	&=
	(z_1^0,\ldots,z_s^0)
	=
	(Y_1 \bar{z}_1,\ldots,Y_s \bar{z}_s)
	\equiv
	(Y_1 \bar{z}_1,\ldots,Y_{s^{\ast}} \bar{z}_{s^{\ast}}, 0,\ldots,0) \bmod N
\end{align}
with $\bar{z}_j \in \mathcal{Z}_{N,w_j}$ for all $j \in \{1,\ldots,s\}$. The fast implementation of the reduced successive coordinate search algorithm can then be formulated as follows.

\begin{algorithm}[Reduced fast SCS algorithm] \mbox{} \label{alg:fast_red_scs}
\begin{enumerate}
	\item Precomputation:
	\begin{enumerate}[(a)]
		\item Compute $\omega\left(\frac{k}{b^m}\right)$ for $k=0,1,\ldots,b^m-1$ and store the results.
		\item For $\bsz^0$ as in (\ref{form_initial_vector}) and $k=0,1,\ldots,b^m-1$ initialize $\bsq=(q(0),\ldots,q(b^m-1))$ as
		\begin{align*}
			q(k)
			&:=
			\prod_{j=1}^{s} \left(1 + \gamma_j \, \omega\left(\frac{k z_j^0 \bmod b^m}{b^m}\right) \right).
		\end{align*}
		\item Set $d=1$ and $s^{\ast}$ to be the largest integer such that $w_{s^{\ast}} < m$.
	\end{enumerate}
\end{enumerate}
While $d \le \min\{s,s^{\ast}\}$:
\begin{enumerate}
\setcounter{enumi}{1}
	\item Set $\bsq_d$ via $\bsq$ by dividing out the initial choice $z_d^0$ (for $k=0,1,\ldots,b^m-1$) such that
	\begin{align*}
	q_d(k)
	&=
	\left[\prod_{j=1}^{d-1} \left( 1 + \gamma_j \, \omega\left( \left\{ \frac{k Y_j z_j}{N} \right\} \right) \right)\right]
	\left[\prod_{j=d+1}^{s} \left( 1 + \gamma_j \, \omega\left( \left\{ \frac{k z_j^0}{N} \right\} \right) \right)\right] .
	\end{align*}
	\item Partition the vector $\bsq_d$ into $b^{w_d}$ vectors $\bsq_d^{(1)},\ldots,\bsq_d^{(b^{w_d})}$ of length $b^{m - w_d}$, where
	\begin{align*}
		\bsq_d^{(\ell)} = (q_d(1 + (\ell-1)b^{m-w_d}),\ldots,q_d(\ell \, b^{m-w_d})) \quad \text{for} \quad \ell=1,\ldots,b^{w_d}
	\end{align*}
	and set $\bsq_d' = \bsq_d^{(1)} + \cdots + \bsq_d^{(b^{w_d})}$.
	\item Calculate $T_d(z) = \Omega_{b^{m-w_d}} \, \bsq'_d$ for all $z \in \mathcal{Z}_{N,w_d}$ using FFTs.
	\item Set $z_d = \arg\min_{z \in \mathcal{Z}_{N,w_d}} T_d(z)$.
	\item Update $\bsq$ via $\bsq_d$ by multiplying with the chosen $z_d$ (for $k=0,1,\ldots,b^m-1$) such that
	\begin{align*}
	q(k)
	&=
	\left[\prod_{j=1}^{d} \left( 1 + \gamma_j \, \omega\left( \left\{ \frac{k Y_j z_j}{N} \right\} \right) \right)\right]
	\left[\prod_{j=d+1}^{s} \left( 1 + \gamma_j \, \omega\left( \left\{ \frac{k z_j^0}{N} \right\} \right) \right)\right] .
	\end{align*}
	\item Increase $d$ by $1$.
\end{enumerate}
If $s > s^{\ast}$, then set $z_{s^{\ast}+1} = \cdots = z_s = 1$. The squared worst-case error is then given as
\begin{align*}
	e^2_{N,s}(Y_1 z_1,\ldots,Y_s z_s)
	&=
	-1 + \frac{1}{b^m} \sum_{k=0}^{b^m-1} q(k) .
\end{align*}
\end{algorithm}

\begin{theorem} \label{thm:computational_complexity_lat}
The computational complexity of Algorithm \ref{alg:fast_red_scs} is 
\begin{align*}
	\mathcal{O} \left( m b^m + \min\{s,s^{\ast}\} \, b^m + \sum_{d=1}^{\min\{s,s^{\ast}\}} (m-w_d) b^{m-w_d} \right).
\end{align*}
\end{theorem}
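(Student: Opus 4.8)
The plan is to go through Algorithm \ref{alg:fast_red_scs} step by step, bound the cost of each individual step, and add up the contributions. The standing assumption that $\omega$ can be evaluated in $b^m$ distinct points at cost $\mathcal{O}(b^m\log b^m)=\mathcal{O}(mb^m)$ takes care of the precomputation step~1(a), and this is where the term $mb^m$ enters. For the initialization of $\bsq$ in step~1(b) I would use that $z_j^0\equiv 0\pmod N$ for $j>s^{\ast}$, so those factors collapse into the single constant $\prod_{j=s^{\ast}+1}^{s}(1+\gamma_j\omega(0))$; computing this constant costs $\mathcal{O}(s-s^{\ast})$ (of lower order), and forming the remaining product of at most $\min\{s,s^{\ast}\}$ factors for each of the $b^m$ entries costs $\mathcal{O}(\min\{s,s^{\ast}\}\,b^m)$. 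Step~1(c) is $\mathcal{O}(1)$.

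Next I would analyse a single pass of the while-loop for a fixed $d\le\min\{s,s^{\ast}\}$. Steps~2, 3 and 6 are elementary operations on vectors of length $b^m$ (dividing out $z_d^0$ entrywise; partitioning $\bsq_d$ into $b^{w_d}$ blocks of length $b^{m-w_d}$ and summing them; multiplying the chosen factor back in), each of cost $\mathcal{O}(b^m)$; step~5 scans the $|\mathcal{Z}_{N,w_d}|\le b^{m-w_d}$ entries of $T_d$ for a minimizer at cost $\mathcal{O}(b^{m-w_d})$; step~7 is $\mathcal{O}(1)$. The only substantial step is step~4. The key observation is that, after the folding carried out in step~3, one has
\[
T_d(z)=\sum_{k=0}^{b^{m-w_d}-1}\omega\!\big((kz\bmod b^{m-w_d})/b^{m-w_d}\big)\,q_d'(k),
\]
which is exactly the matrix-vector product occurring in one step of the ordinary fast CBC construction for the smaller modulus $b^{m-w_d}$ (here one uses that $kb^{w_d}z\bmod b^m$ depends only on $k\bmod b^{m-w_d}$, and equals $b^{w_d}(kz\bmod b^{m-w_d})$). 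By Theorems \ref{thm:matrix_structure_bnot2} and \ref{thm:matrix_structure_b2}, the column-folded reduced matrix carries precisely the block-circulant structure of the Nuyens--Cools matrix $\Omega_{b^{m-w_d}}$, so this product can be evaluated simultaneously for all $z\in\mathcal{Z}_{N,w_d}$ by FFTs in $\mathcal{O}\big((m-w_d)\,b^{m-w_d}\big)$ operations, exactly as in \cite{NC06,NC06b,CKN06}. Hence one pass of the loop costs $\mathcal{O}\big(b^m+(m-w_d)\,b^{m-w_d}\big)$.

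Finally I would sum everything: the precomputation contributes $\mathcal{O}\big(mb^m+\min\{s,s^{\ast}\}\,b^m\big)$; the $\min\{s,s^{\ast}\}$ passes of the loop contribute $\mathcal{O}\big(\min\{s,s^{\ast}\}\,b^m+\sum_{d=1}^{\min\{s,s^{\ast}\}}(m-w_d)\,b^{m-w_d}\big)$; the assignment $z_{s^{\ast}+1}=\cdots=z_s=1$ costs $\mathcal{O}(s-s^{\ast})$; and the concluding evaluation $e^2_{N,s}=-1+b^{-m}\sum_k q(k)$ costs $\mathcal{O}(b^m)$. Absorbing the $\mathcal{O}(s)$-type bookkeeping and the stray $\mathcal{O}(b^m)$ terms into the larger ones yields precisely
\[
\mathcal{O}\!\left(mb^m+\min\{s,s^{\ast}\}\,b^m+\sum_{d=1}^{\min\{s,s^{\ast}\}}(m-w_d)\,b^{m-w_d}\right),
\]
as claimed. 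I expect the one place needing genuine care to be step~4: one has to verify that step~3 really reduces the $|\mathcal{Z}_{N,w_d}|\times b^m$ product to the length-$b^{m-w_d}$ fast-CBC product, and then that the Nuyens--Cools/CKN circulant decomposition of $\Omega_{b^{m-w_d}}$ splits it into FFTs whose sizes form a geometric progression of total length $\mathcal{O}(b^{m-w_d})$ — so that the extra factor $m-w_d$ is contributed only by the logarithm in the FFT cost. Everything else is routine counting.
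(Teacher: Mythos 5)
Your proposal is correct and follows essentially the same route as the paper's proof: a step-by-step cost accounting in which the precomputation gives the $mb^m$ term, the initialization and the per-iteration vector operations (using that the factors for $j>s^{\ast}$ collapse to a constant) give the $\min\{s,s^{\ast}\}\,b^m$ term, and the folded matrix--vector product of Step~4, evaluated by FFT via the block-circulant structure of Theorems \ref{thm:matrix_structure_bnot2} and \ref{thm:matrix_structure_b2}, gives the $\sum_{d}(m-w_d)b^{m-w_d}$ term. Your explicit reduction of $T_d(z)$ to the smaller-modulus fast-CBC product after the folding in Step~3 is a slightly more detailed justification of what the paper handles by citing those theorems and \cite{NC06b,NC06}, but the argument is the same.
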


\begin{proof}
	The first term originates from the precalculation in (a) of Algorithm 2 which requires $\mathcal{O}(m b^m)$ operations.
	Due to the chosen form of initial vectors as in (\ref{form_initial_vector}), the initialization of $\bsq$ in (b) of Algorithm 2
	only requires $\mathcal{O}(\min\{s,s^{\ast}\} b^m)$ operations since for $k=0,1,\ldots,b^m-1$
	\begin{align*}
	q(k)
	&=
	\prod_{j=1}^{s^{\ast}} \left(1 + \gamma_j \, \omega\left(\frac{k z_j^0 \bmod b^m}{b^m}\right) \right)
	\prod_{j = s^{\ast} + 1}^{s} \left(1 + \gamma_j \, \omega(0) \right) .
	\end{align*}
	Furthermore, the updates for $\bsq_d$ and $\bsq$ in the Steps 2 and 6, respectively, can likewise be 
	done in $\mathcal{O}(\min\{s,s^{\ast}\} b^m)$ operations. The additions in Step 3 similarly require 
	$\mathcal{O}(\min\{s,s^{\ast}\} b^m)$ calculations. Lastly, the matrix-vector product in Step 4 can be
	computed in only $\mathcal{O}((m-w_d) b^{m-w_d})$ operations using FFTs (see, e.g., \cite{NC06b,NC06}) and the results of Theorems 
	\ref{thm:matrix_structure_bnot2} and \ref{thm:matrix_structure_b2}. This then gives the last
	term and proves the theorem.
\end{proof}

\begin{remark}
	Note that in the implementation of Algorithm \ref{alg:fast_red_scs}, the vector $\bsq$ also has
	to be ordered with respect to a generator $g$ as in Theorems \ref{thm:matrix_structure_bnot2}
	and \ref{thm:matrix_structure_b2} in order to exploit the special structure of the matrix $\Omega_{b^m,w}$.
\end{remark}

Furthermore, for initial vectors $\bsz^0$ as in (\ref{form_initial_vector}), we obtain the following useful
result.

\begin{theorem} \label{thm: z0-z}
	Let the initial vector $\bsz^0 \in \{0,1,\ldots,N-1\}^s$ be of the form (\ref{form_initial_vector}) and denote 
	by $\bsz$ the result of Algorithm \ref{alg:lattice} seeded with $\bsz^0$. Then the generating vector $\bsz$ 
	satisfies
	\begin{align*}
	e_{N,s}(\bsz) \le e_{N,s}(\bsz^0) ,
	\end{align*}
	i.e., the constructed vector $\bsz$ is always at least as good as the initial vector $\bsz^0$ with respect 
	to the associated worst-case error.
\end{theorem}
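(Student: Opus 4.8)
The plan is to exploit the greedy, one-component-at-a-time structure of Algorithm~\ref{alg:lattice} together with the fact that the squared worst-case error $e^2_{N,s}$ depends on a generating vector only through its residues modulo $N$; this is immediate from \eqref{eqerrorexprlps}, since the dual lattice $\D_\setu$, and hence the inner sum $\sum_{\bsh_\setu\in\D_\setu}\rho_\alpha(\bsh_\setu)$, is unchanged when each component of $\bsz$ is replaced by its residue mod $N$. First I would introduce the intermediate vectors produced by the algorithm: writing $\bsz^0=(Y_1\bar z_1,\ldots,Y_s\bar z_s)$ with $\bar z_j\in\mathcal{Z}_{N,w_j}$ as in \eqref{form_initial_vector}, set, for $d\in\{0,1,\ldots,s\}$,
\[
\bsz^{(d)}:=(Y_1 z_1,\ldots,Y_d z_d,z^0_{d+1},\ldots,z^0_s),
\]
so that $\bsz^{(0)}=\bsz^0$ and $\bsz^{(s)}=\bsz$.

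The heart of the argument is the one-step monotonicity
\[
e^2_{N,s}(\bsz^{(d)})\le e^2_{N,s}(\bsz^{(d-1)})\qquad\text{for all }d\in\{1,\ldots,s\}.
\]
To establish it, I would note that $\bsz^{(d)}$ and $\bsz^{(d-1)}$ agree in their first $d-1$ entries (namely $Y_1z_1,\ldots,Y_{d-1}z_{d-1}$) and in their last $s-d$ entries (namely $z^0_{d+1},\ldots,z^0_s$), differing only in position $d$, where they read $Y_d z_d$ and $z^0_d$, respectively. By \eqref{form_initial_vector} one has $z^0_d\equiv Y_d\bar z_d\pmod N$ with $\bar z_d\in\mathcal{Z}_{N,w_d}$, so $\bar z_d$ is an admissible value in the search space $\mathcal{Z}_{N,w_d}$ used at step $d$; keeping $z_d=\bar z_d$ would produce a vector congruent mod $N$ to $\bsz^{(d-1)}$, hence, by the invariance noted above, with squared worst-case error equal to $e^2_{N,s}(\bsz^{(d-1)})$. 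Since step $d$ of Algorithm~\ref{alg:lattice} chooses $z_d$ as a minimizer of $e^2_{N,s}$ over $\mathcal{Z}_{N,w_d}$ in precisely this frame, $\bsz^{(d)}$ can only be at least as good. (For $d>s^\ast$, i.e.\ $w_d\ge m$, the search space equals $\{1\}$ and $Y_d z_d=b^{w_d}\equiv 0\equiv z^0_d\pmod N$, so the step is trivial.) Chaining these inequalities from $d=1$ to $d=s$ gives $e^2_{N,s}(\bsz)\le e^2_{N,s}(\bsz^0)$, and taking square roots yields the claim.

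There is no genuinely hard step here; the one point requiring attention is that the initial value $\bar z_d$ must actually belong to the \emph{reduced} search space $\mathcal{Z}_{N,w_d}$ at step $d$, which is exactly what the hypothesis that $\bsz^0$ has the form \eqref{form_initial_vector} guarantees. This is also the reason an unrestricted starting vector cannot be allowed in the statement: for a general $\bsz^0$ the entry $z^0_d$ need not be representable as $Y_d$ times an element of $\mathcal{Z}_{N,w_d}$, so the initial value is no longer available as a candidate and the greedy step cannot be compared against it.
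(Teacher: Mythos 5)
Your proposal is correct and follows essentially the same route as the paper: the paper's proof is the same observation, stated in two sentences, that for an initial vector of the form \eqref{form_initial_vector} the value $z_d^0$ (i.e.\ $\bar z_d\in\mathcal{Z}_{N,w_d}$) is among the candidates at each minimization step $d$, so the worst-case error can never increase. Your write-up merely makes explicit the intermediate vectors $\bsz^{(d)}$, the mod-$N$ invariance, and the trivial steps $d>s^{\ast}$, which is a fine elaboration but not a different argument.
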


\begin{proof}
	For this special choice of initial vectors, the statement follows directly from the formulation of Algorithm
	\ref{alg:lattice}. Since the value of $z_d^0$ is in each minimization step $d \in [s]$ amongst the candidates
	for $z_d$ the worst-case error $e_{N,s}$ never grows.
\end{proof}

\section{Numerical results} \label{secNum}

In this section, the results from Sections \ref{secSCS} and \ref{secFastscs} which led to the reduced fast SCS construction, 
stated in Algorithm \ref{alg:fast_red_scs}, will be illustrated via numerical experiments. Here we consider the construction
of rank-1 lattices in weighted Korobov spaces $\cH(K_{s,\alpha,\bsgamma})$ of smoothness $\alpha > 1$, and, as in Section 
\ref{secFastscs}, we assume product weights $\gamma_{\setu} = \prod_{j \in \setu} \gamma_j$. For $\bsxi=(\xi_1,\ldots,\xi_s) \in \{0,1,\ldots,N-1\}^s$
the worst-case error is then given by 
\begin{align} \label{eq:wce_korobov_exp}
e^2_{N,s}(\bsxi) = -1 + \frac1N \sum_{k=0}^{N-1} \prod_{j=1}^{s} \left( 1 + \gamma_j \sum_{h \in \ZZ_{\ast}} \frac{\exp(2 \pi \icomp h k \xi_j/N)}{|h|^{\alpha}} \right) ,
\end{align}
and it is easy to check that the symmetry assumption which was previously imposed on $\omega$ is satisfied. For an even smoothness
parameter $\alpha$, the sum of exponentials in (\ref{eq:wce_korobov_exp}) simplifies to the Bernoulli polynomial $B_{\alpha}(\left\{k \xi_j/N\right\})$ modulo a constant, 
see, e.g., \cite{DKS13}. For ease of implementation, we will therefore restrict our experiments to the case $\alpha=2$ so that the worst-case error reads
\begin{align*}
e^2_{N,s}(\bsxi) = -1 + \frac1N \sum_{k=0}^{N-1} \prod_{j=1}^{s} \left( 1 + 2 \pi^2 \gamma_j B_2\left(\left\{\frac{k \xi_j}{N}\right\}\right) \right) .
\end{align*}
Due to the connection between Korobov and (unanchored) Sobolev spaces pointed out in Section \ref{secKor}, the presented results
remain also valid for integration in weighted Sobolev spaces using randomly shifted or tent-transformed lattice rules.

The subsequent sections are devoted to illustrating the key features of the reduced fast SCS algorithm, i.e., the error convergence rate 
of the constructed lattices, the computational complexity of the algorithm, and the precise worst-case errors.
In order to carry out a rigorous analysis, we will always compare the obtained results with those of the reduced and unreduced
CBC construction and the unreduced SCS construction as in \cite{ELN18}. The different algorithms have all been implemented
using Matlab R2016b.

\newpage

\subsection{Error convergence behavior} \label{subsec: error_convergence}

We consider the convergence rate of the worst-case error $e_{N,s}$ for different weight sequences $\bsgamma = (\gamma_j)_{j\ge 1}$
and reduction indices $w_j$ of the form $w_j = \lfloor c \log_b j \rfloor$ with $c>0$.  According to Corollary \ref{corlatticeproduct}, the almost optimal error convergence rate of $\mathcal{O}(N^{-1+\delta})$ for the reduced CBC and SCS algorithm will always be achieved for $N\To\infty$. Additionally, Corollary \ref{corlatticeproduct} implies that a constant independent of $s$ can be achieved provided that the chosen weights $\gamma_j$ satisfy
\begin{align} \label{ineq:condition}
\sum_{j=1}^{\infty} \gamma_j^{\frac{1}{2(1 - \delta)}} \, b^{w_j} 
&\le
\sum_{j=1}^{\infty} \gamma_j^{\frac{1}{2}} \, b^{w_j} 
<  
\infty .
\end{align}
It is to be expected that parameter choices which satisfy the condition in \eqref{ineq:condition} will yield a nicer error behavior also 
in numerical experiments, since the negative influence of high $s$ is not present anymore. In particular, if \eqref{ineq:condition} is satisfied, 
there should not be much difference in the error behavior of the vectors obtained by reduced and unreduced algorithms, respectively, since the 
negative influence of the $w_j$ washes away. Nevertheless, there are situations 
where the almost optimal convergence order $\mathcal{O}(N^{-1+\delta})$  is only visible for larger values of $N$ than those considered in our 
numerical experiments. In that sense, our numerical results are to be understood as illustrating a kind of ``pre-asymptotic'' error behavior.

Here, we consider two common types of weight sequences with the general form 
$\gamma_j = q ^j$ with $0<q<1$ or $\gamma_j = 1/j^a$ with $a > 1$. For the former type of weights, Corollary \ref{corlatticeproduct} 
assures the optimal error convergence rate, with constant independent of $s$, for any $q$. For the latter type, we see that since
\begin{align*}
\sum_{j=1}^{\infty} \gamma_j^{\frac{1}{2}} \, b^{w_j} 
&= 
\sum_{j=1}^{\infty} j^{-\frac{a}{2}} \, b^{\lfloor c \log_b j \rfloor}
\asymp
\sum_{j=1}^{\infty} j^{-\frac{a}{2}} \, b^{c \log_b j}
=
\sum_{j=1}^{\infty} j^{c-\frac{a}{2}},
\end{align*}
the convergence of the series on the right-hand side of \eqref{ineq:condition} 
is only guaranteed for small $\delta$ if $a > 2(1+c)$. In Figures 1 and 2 we display the results of numerical
experiments using different weights $\gamma_j$ for a moderate and rapid reduction of $w_j = \lfloor 2 \log_b j \rfloor$ and $\lfloor \frac72 \log_b j \rfloor$, respectively. 
The generating vectors $\bsz$ are constructed by the reduced and unreduced versions of both the CBC and the SCS construction,
where the initial vector for the reduced and unreduced SCS algorithm is fixed to $\bsz^0 = (Y_1,\ldots,Y_s)$ and $\bsz^0 = (1,\ldots,1)$, respectively.

\begin{figure}[H]
	\centering
	\textbf{Error convergence in the Korobov space with $s=100, \alpha=2, b=3, w_j=\lfloor 2 \log_b j \rfloor$.} \par\medskip 
	\hspace{-0.25cm}
	\centering
	\begin{subfigure}[b]{0.5\textwidth}
		\centering
		\begin{tikzpicture}
		\begin{axis}[%
		width=0.8\textwidth,
		height=0.8\textwidth,
		at={(0\textwidth,0\textwidth)},
		scale only axis,
		xmode=log,
		xmin=10,
		xmax=1000000,
		xminorticks=true,
		xlabel={Number of points $N=b^m$},
		xmajorgrids,
		ymode=log,
		ymin=1e-06,
		ymax=0.1,
		yminorticks=true,
		ylabel={Worst-case error $e_{N,s}(\mathbf{z})$},
		ymajorgrids,
		axis background/.style={fill=white},
		legend style={legend cell align=left,align=left,draw=white!15!black}
		]
		\addplot [color=mycolor1,solid,line width=0.7pt,mark=square,mark options={solid},forget plot]
		table[row sep=crcr]{%
			81	0.0190097600892007\\
			243	0.00700337390321026\\
			729	0.0026093752140544\\
			2187	0.000918569335043437\\
			6561	0.000332746702774257\\
			19683	0.000120532633844437\\
			59049	4.26219448127198e-05\\
			177147	1.51443163106254e-05\\
			531441	5.46655348643188e-06\\
		};
		\addplot [color=mycolor2,solid,line width=0.7pt,mark=triangle,mark options={solid},forget plot]
		table[row sep=crcr]{%
			81	0.0191306384093743\\
			243	0.00699825698376761\\
			729	0.00262531394230185\\
			2187	0.000920818725230272\\
			6561	0.000338340236986696\\
			19683	0.000119495106019267\\
			59049	4.36091768414625e-05\\
			177147	1.62677553945531e-05\\
			531441	5.8461188484379e-06\\
		};
		\addplot [color=mycolor3,solid,line width=0.7pt,mark=o,mark options={solid},forget plot]
		table[row sep=crcr]{%
			81	0.0305866026153664\\
			243	0.0107605206787417\\
			729	0.00378982989517774\\
			2187	0.00132168827206661\\
			6561	0.000467390007997536\\
			19683	0.000159730875939243\\
			59049	5.55916532438074e-05\\
			177147	1.95349563030929e-05\\
			531441	6.96729283754497e-06\\
		};
		\addplot [color=mycolor4,solid,line width=0.7pt,mark=diamond,mark options={solid},forget plot]
		table[row sep=crcr]{%
			81	0.0305881018884227\\
			243	0.0107810956467476\\
			729	0.0038890776574316\\
			2187	0.00135140765524873\\
			6561	0.000471745792936818\\
			19683	0.000166041867964049\\
			59049	5.86669143515538e-05\\
			177147	2.06230940412581e-05\\
			531441	7.19598041521424e-06\\
		};
		\addplot [color=mycolor5,dotted,line width=1.0pt]
		table[row sep=crcr]{%
			81	0.0611732052307328\\
			243	0.0215424971618203\\
			729	0.00758631466532811\\
			2187	0.00267156447876302\\
			6561	0.000940806844831773\\
			19683	0.000331310558408136\\
			59049	0.000116672924645162\\
			177147	4.10870435601593e-05\\
			531441	1.44690394420864e-05\\
		};
		\addlegendentry{$\mathcal{O}(N^{-0.95})$};
		\end{axis}
		\end{tikzpicture}
		\caption{Weight sequence $\bsgamma=(\gamma_j)_{j=1}^s$ with $\gamma_j = (0.2)^j$.}    
	\end{subfigure}
	\begin{subfigure}[b]{0.5\textwidth}  
		\centering 
		\begin{tikzpicture}
		\begin{axis}[%
		width=0.8\textwidth,
		height=0.8\textwidth,
		at={(0\textwidth,0\textwidth)},
		scale only axis,
		xmode=log,
		xmin=10,
		xmax=1000000,
		xminorticks=true,
		xlabel={Number of points $N=b^m$},
		xmajorgrids,
		ymode=log,
		ymin=0.01,
		ymax=100,
		yminorticks=true,
		ylabel={Worst-case error $e_{N,s}(\mathbf{z})$},
		ymajorgrids,
		axis background/.style={fill=white},
		legend style={legend cell align=left,align=left,draw=white!15!black}
		]
		\addplot [color=mycolor1,solid,line width=0.7pt,mark=square,mark options={solid},forget plot]
		table[row sep=crcr]{%
			81	7.98426741047185\\
			243	4.57893299147049\\
			729	2.61628192796676\\
			2187	1.48507202904465\\
			6561	0.839176237025462\\
			19683	0.470582587540824\\
			59049	0.261598212975844\\
			177147	0.144578943381338\\
			531441	0.0794061945283336\\
		};
		\addplot [color=mycolor2,solid,line width=0.7pt,mark=triangle,mark options={solid},forget plot]
		table[row sep=crcr]{%
			81	7.98717627118422\\
			243	4.57556840477024\\
			729	2.61872041677695\\
			2187	1.48566684074331\\
			6561	0.845572666495825\\
			19683	0.474777885499354\\
			59049	0.26484500713064\\
			177147	0.146560054727094\\
			531441	0.0807602071113651\\
		};
		\addplot [color=mycolor3,solid,line width=0.7pt,mark=o,mark options={solid},forget plot]
		table[row sep=crcr]{%
			81	8.92104287350194\\
			243	5.13712404150564\\
			729	2.93852435919298\\
			2187	1.6646238927478\\
			6561	0.939639742899799\\
			19683	0.52390958638307\\
			59049	0.291070577863984\\
			177147	0.159778778856683\\
			531441	0.0878698568253495\\
		};
		\addplot [color=mycolor4,solid,line width=0.7pt,mark=diamond,mark options={solid},forget plot]
		table[row sep=crcr]{%
			81	8.91602819503788\\
			243	5.1274487948461\\
			729	2.92329170693288\\
			2187	1.66185422333608\\
			6561	0.938753897794267\\
			19683	0.527351653875154\\
			59049	0.293721943355142\\
			177147	0.162991561620455\\
			531441	0.0891427302196082\\
		};
		\addplot [color=mycolor5,dotted,line width=1.0pt]
		table[row sep=crcr]{%
			81	17.8420857470039\\
			243	9.96715833112162\\
			729	5.56797263539268\\
			2187	3.11044715439901\\
			6561	1.73759501596879\\
			19683	0.970676012048478\\
			59049	0.542250588720187\\
			177147	0.302918478789712\\
			531441	0.169219926545111\\
		};
		\addlegendentry{$\mathcal{O}(N^{-0.53})$};
		\end{axis}
		\end{tikzpicture}
		\caption{Weight sequence $\bsgamma=(\gamma_j)_{j=1}^s$ with $\gamma_j = (0.8)^j$.}
	\end{subfigure}
	\vskip\baselineskip
	\hspace{-0.25cm}
	\centering
	\begin{subfigure}[b]{0.5\textwidth}
		\centering
		\begin{tikzpicture}
		\begin{axis}[%
		width=0.8\textwidth,
		height=0.8\textwidth,
		at={(0\textwidth,0\textwidth)},
		scale only axis,
		xmode=log,
		xmin=10,
		xmax=1000000,
		xminorticks=true,
		xlabel={Number of points $N=b^m$},
		xmajorgrids,
		ymode=log,
		ymin=1e-05,
		ymax=1,
		yminorticks=true,
		ylabel={Worst-case error $e_{N,s}(\mathbf{z})$},
		ymajorgrids,
		axis background/.style={fill=white},
		legend style={legend cell align=left,align=left,draw=white!15!black}
		]
		\addplot [color=mycolor1,solid,line width=0.7pt,mark=square,mark options={solid},forget plot]
		table[row sep=crcr]{%
			81	0.100444481016235\\
			243	0.0421670572710047\\
			729	0.0176348914759493\\
			2187	0.0071474035737722\\
			6561	0.002939449427362\\
			19683	0.00119471428242999\\
			59049	0.000481774151949303\\
			177147	0.00019467437936298\\
			531441	8.02118212774332e-05\\
		};
		\addplot [color=mycolor2,solid,line width=0.7pt,mark=triangle,mark options={solid},forget plot]
		table[row sep=crcr]{%
			81	0.10377879487224\\
			243	0.0422592834038276\\
			729	0.0171913155993051\\
			2187	0.00761965658619845\\
			6561	0.00309193487622798\\
			19683	0.00126631052607984\\
			59049	0.000508438846123129\\
			177147	0.000207548207797968\\
			531441	8.34361563807986e-05\\
		};
		\addplot [color=mycolor3,solid,line width=0.7pt,mark=o,mark options={solid},forget plot]
		table[row sep=crcr]{%
			81	0.193302807324251\\
			243	0.104425108240108\\
			729	0.0574554043225389\\
			2187	0.0304658036205289\\
			6561	0.0137716013829108\\
			19683	0.00465041588425034\\
			59049	0.00161836873222445\\
			177147	0.00054857877969643\\
			531441	0.000190978996444582\\
		};
		\addplot [color=mycolor4,solid,line width=0.7pt,mark=diamond,mark options={solid},forget plot]
		table[row sep=crcr]{%
			81	0.193304458832975\\
			243	0.105327142195581\\
			729	0.0575493432599634\\
			2187	0.0304791813024935\\
			6561	0.0137395737698882\\
			19683	0.00469600287688063\\
			59049	0.00160628500376854\\
			177147	0.00055286006639915\\
			531441	0.000194507495257577\\
		};
		\addplot [color=mycolor5,dotted,line width=1.0pt]
		table[row sep=crcr]{%
			81	0.193302807324251\\
			243	0.0802677625994521\\
			729	0.0333306784412835\\
			2187	0.0138403524575609\\
			6561	0.00574711842385578\\
			19683	0.0023864544114105\\
			59049	0.000990960032092001\\
			177147	0.000411489857299797\\
			531441	0.000170868548858777\\
		};
		\addlegendentry{$\mathcal{O}(N^{-0.80})$};
		\end{axis}
		\end{tikzpicture}
		\caption{Weight sequence $\bsgamma=(\gamma_j)_{j=1}^s$ with $\gamma_j = 1/j^3$.}
	\end{subfigure}
	\begin{subfigure}[b]{0.5\textwidth}  
		\centering 
		\begin{tikzpicture}		
		\begin{axis}[%
		width=0.8\textwidth,
		height=0.8\textwidth,
		at={(0\textwidth,0\textwidth)},
		scale only axis,
		xmode=log,
		xmin=10,
		xmax=1000000,
		xminorticks=true,
		xlabel={Number of points $N=b^m$},
		xmajorgrids,
		ymode=log,
		ymin=1e-06,
		ymax=0.1,
		yminorticks=true,
		ylabel={Worst-case error $e_{N,s}(\mathbf{z})$},
		ymajorgrids,
		axis background/.style={fill=white},
		legend style={legend cell align=left,align=left,draw=white!15!black}
		]
		\addplot [color=mycolor1,solid,line width=0.7pt,mark=square,mark options={solid},forget plot]
		table[row sep=crcr]{%
			81	0.0240585176237469\\
			243	0.00818001762519953\\
			729	0.00278539219469362\\
			2187	0.000936990159575669\\
			6561	0.000316351369633843\\
			19683	0.000106411433182955\\
			59049	3.6083453360805e-05\\
			177147	1.2161009192294e-05\\
			531441	4.12587294908273e-06\\
		};
		\addplot [color=mycolor2,solid,line width=0.7pt,mark=triangle,mark options={solid},forget plot]
		table[row sep=crcr]{%
			81	0.0240585176237607\\
			243	0.00817977002293763\\
			729	0.00278560614292497\\
			2187	0.000937323836234134\\
			6561	0.000316369633749238\\
			19683	0.000107366439105026\\
			59049	3.60834872058126e-05\\
			177147	1.21612465534906e-05\\
			531441	4.1354682341823e-06\\
		};
		\addplot [color=mycolor3,solid,line width=0.7pt,mark=o,mark options={solid},forget plot]
		table[row sep=crcr]{%
			81	0.0242397206583572\\
			243	0.00817556227483744\\
			729	0.0027793865803811\\
			2187	0.000945105903910769\\
			6561	0.000317864127254038\\
			19683	0.000107261923299374\\
			59049	3.60521669955764e-05\\
			177147	1.22196154443481e-05\\
			531441	4.11984096786143e-06\\
		};
		\addplot [color=mycolor4,solid,line width=0.7pt,mark=diamond,mark options={solid},forget plot]
		table[row sep=crcr]{%
			81	0.0242397206583527\\
			243	0.00817556227482386\\
			729	0.00277895111408751\\
			2187	0.000944778255369394\\
			6561	0.000317857235964381\\
			19683	0.000107312366371227\\
			59049	3.60746987449705e-05\\
			177147	1.22195245882031e-05\\
			531441	4.1185472510866e-06\\
		};
		\addplot [color=mycolor5,dotted,line width=1.0pt]
		table[row sep=crcr]{%
			81	0.0484794413167145\\
			243	0.0161598137722382\\
			729	0.00538660459074606\\
			2187	0.00179553486358202\\
			6561	0.000598511621194006\\
			19683	0.000199503873731335\\
			59049	6.65012912437785e-05\\
			177147	2.21670970812595e-05\\
			531441	7.38903236041983e-06\\
		};
		\addlegendentry{$\mathcal{O}(N^{-1})$};
		\end{axis}
		\end{tikzpicture}
		\caption{Weight sequence $\bsgamma=(\gamma_j)_{j=1}^s$ with $\gamma_j = 1/j^8$.}
	\end{subfigure}
	\vskip\baselineskip
	\begin{tikzpicture}
	\hspace{0.05\linewidth}
	\begin{customlegend}[legend columns=4,legend style={align=left,draw=none,column sep=2ex},legend entries={CBC,SCS,reduced CBC,reduced SCS}]
	\addlegendimage{color=mycolor1, mark=square,solid,line width=1.0pt,line legend}
	\addlegendimage{color=mycolor2, mark=triangle,solid,line width=1.0pt}  
	\addlegendimage{color=mycolor3, mark=o,solid,line width=1.0pt}
	\addlegendimage{color=mycolor4, mark=diamond,solid,line width=1.0pt}
	\end{customlegend}
	\end{tikzpicture}
		\caption{Convergence of the worst-case error $e_{N,s}(\bsz)$ in the weighted Korobov space $\cH(K_{s,\alpha,\bsgamma})$ of
			smoothness $\alpha=2$ with $s=100, b = 3$ and integer sequence $w_j = \lfloor 2 \log_b j \rfloor$. The generating vector $\bsz$
			is constructed via the reduced and unreduced CBC construction and the reduced and unreduced SCS algorithm, respectively.}  
\end{figure}

\begin{figure}[H]
	\centering
	\textbf{Error convergence in the Korobov space with $s=100, \alpha=2, b=3, w_j=\lfloor \frac72 \log_b j \rfloor$.} \par\medskip 
	\hspace{-0.25cm}
	\centering
	\begin{subfigure}[b]{0.5\textwidth}
		\centering
		\begin{tikzpicture}
		\begin{axis}[%
		width=0.8\textwidth,
		height=0.8\textwidth,
		at={(0\textwidth,0\textwidth)},
		scale only axis,
		xmode=log,
		xmin=10,
		xmax=1000000,
		xminorticks=true,
		xlabel={Number of points $N=b^m$},
		xmajorgrids,
		ymode=log,
		ymin=1e-06,
		ymax=1,
		yminorticks=true,
		ylabel={Worst-case error $e_{N,s}(\mathbf{z})$},
		ymajorgrids,
		axis background/.style={fill=white},
		legend style={legend cell align=left,align=left,draw=white!15!black}
		]
		\addplot [color=mycolor1,solid,line width=0.7pt,mark=square,mark options={solid},forget plot]
		table[row sep=crcr]{%
			81	0.0190097600892007\\
			243	0.00700337390321026\\
			729	0.0026093752140544\\
			2187	0.000918569335043437\\
			6561	0.000332746702774257\\
			19683	0.000120532633844437\\
			59049	4.26219448127198e-05\\
			177147	1.51443163106254e-05\\
			531441	5.46655348643188e-06\\
		};
		\addplot [color=mycolor2,solid,line width=0.7pt,mark=triangle,mark options={solid},forget plot]
		table[row sep=crcr]{%
			81	0.0191306384093743\\
			243	0.00699825698376761\\
			729	0.00262531394230185\\
			2187	0.000920818725230272\\
			6561	0.000338340236986696\\
			19683	0.000119495106019267\\
			59049	4.36091768414625e-05\\
			177147	1.62677553945531e-05\\
			531441	5.8461188484379e-06\\
		};
		\addplot [color=mycolor3,solid,line width=0.7pt,mark=o,mark options={solid},forget plot]
		table[row sep=crcr]{%
			81	0.107417407011684\\
			243	0.0498251469981294\\
			729	0.0180598562059244\\
			2187	0.00622807045034995\\
			6561	0.00208854945259394\\
			19683	0.00070291636976728\\
			59049	0.000235587361302276\\
			177147	7.90339735658898e-05\\
			531441	2.66151174745817e-05\\
		};
		\addplot [color=mycolor4,solid,line width=0.7pt,mark=diamond,mark options={solid},forget plot]
		table[row sep=crcr]{%
			81	0.107417407011684\\
			243	0.0498665356003917\\
			729	0.018077524909066\\
			2187	0.00624178944639091\\
			6561	0.00209939464735071\\
			19683	0.000702831318843465\\
			59049	0.000236776245113785\\
			177147	7.93219323129461e-05\\
			531441	2.68585318460725e-05\\
		};
		\addplot [color=mycolor5,dotted,line width=1.0pt]
		table[row sep=crcr]{%
			81	0.214834814023369\\
			243	0.0756553192513364\\
			729	0.0266424571689718\\
			2187	0.00938229500614999\\
			6561	0.00330402931772168\\
			19683	0.0011635329868874\\
			59049	0.000409744854355178\\
			177147	0.000144294014490879\\
			531441	5.08139697096624e-05\\
		};
		\addlegendentry{$\mathcal{O}(N^{-0.95})$};
		\end{axis}
		\end{tikzpicture}
		\caption{Weight sequence $\bsgamma=(\gamma_j)_{j=1}^s$ with $\gamma_j = (0.2)^j$.}    
	\end{subfigure}
	\begin{subfigure}[b]{0.5\textwidth}  
		\centering 
		\begin{tikzpicture}	
		\begin{axis}[%
		width=0.8\textwidth,
		height=0.8\textwidth,
		at={(0\textwidth,0\textwidth)},
		scale only axis,
		xmode=log,
		xmin=10,
		xmax=1000000,
		xminorticks=true,
		xlabel={Number of points $N=b^m$},
		xmajorgrids,
		ymode=log,
		ymin=0.01,
		ymax=100,
		yminorticks=true,
		ylabel={Worst-case error $e_{N,s}(\mathbf{z})$},
		ymajorgrids,
		axis background/.style={fill=white},
		legend style={legend cell align=left,align=left,draw=white!15!black}
		]
		\addplot [color=mycolor1,solid,line width=0.7pt,mark=square,mark options={solid},forget plot]
		table[row sep=crcr]{%
			81	7.98426741047185\\
			243	4.57893299147049\\
			729	2.61628192796676\\
			2187	1.48507202904465\\
			6561	0.839176237025462\\
			19683	0.470582587540824\\
			59049	0.261598212975844\\
			177147	0.144578943381338\\
			531441	0.0794061945283336\\
		};
		\addplot [color=mycolor2,solid,line width=0.7pt,mark=triangle,mark options={solid},forget plot]
		table[row sep=crcr]{%
			81	7.98717627118422\\
			243	4.57556840477024\\
			729	2.61872041677695\\
			2187	1.48566684074331\\
			6561	0.845572666495825\\
			19683	0.474777885499354\\
			59049	0.26484500713064\\
			177147	0.146560054727094\\
			531441	0.0807602071113651\\
		};
		\addplot [color=mycolor3,solid,line width=0.7pt,mark=o,mark options={solid},forget plot]
		table[row sep=crcr]{%
			81	15.5398596598874\\
			243	10.2164776252988\\
			729	6.31928613215737\\
			2187	3.90538908677031\\
			6561	2.32873422493935\\
			19683	1.3436632877236\\
			59049	0.744194528370983\\
			177147	0.401072040278803\\
			531441	0.20726166347435\\
		};
		\addplot [color=mycolor4,solid,line width=0.7pt,mark=diamond,mark options={solid},forget plot]
		table[row sep=crcr]{%
			81	15.5398596598874\\
			243	10.2085273306613\\
			729	6.27596884643545\\
			2187	3.88109568912166\\
			6561	2.28691533596137\\
			19683	1.33655534035704\\
			59049	0.735294670318077\\
			177147	0.396994146436193\\
			531441	0.208000438782122\\
		};
		\addplot [color=mycolor5,dotted,line width=1.0pt]
		table[row sep=crcr]{%
			81	31.0797193197747\\
			243	17.3621227775476\\
			729	9.69903570367328\\
			2187	5.41819078153163\\
			6561	3.02677423220085\\
			19683	1.69085265213295\\
			59049	0.944564236344175\\
			177147	0.52766371774322\\
			531441	0.294769787283312\\
		};
		\addlegendentry{$\mathcal{O}(N^{-0.53})$};
		\end{axis}
		\end{tikzpicture}
		\caption{Weight sequence $\bsgamma=(\gamma_j)_{j=1}^s$ with $\gamma_j = (0.8)^j$.}
	\end{subfigure}
	\vskip\baselineskip
	\hspace{-0.25cm}
	\centering
	\begin{subfigure}[b]{0.5\textwidth}
		\centering
		\begin{tikzpicture}
		\begin{axis}[%
		width=0.8\textwidth,
		height=0.8\textwidth,
		at={(0\textwidth,0\textwidth)},
		scale only axis,
		xmode=log,
		xmin=10,
		xmax=1000000,
		xminorticks=true,
		xlabel={Number of points $N=b^m$},
		xmajorgrids,
		ymode=log,
		ymin=1e-05,
		ymax=1,
		yminorticks=true,
		ylabel={Worst-case error $e_{N,s}(\mathbf{z})$},
		ymajorgrids,
		axis background/.style={fill=white},
		legend style={legend cell align=left,align=left,draw=white!15!black}
		]
		\addplot [color=mycolor1,solid,line width=0.7pt,mark=square,mark options={solid},forget plot]
		table[row sep=crcr]{%
			81	0.100444481016235\\
			243	0.0421670572710047\\
			729	0.0176348914759493\\
			2187	0.0071474035737722\\
			6561	0.002939449427362\\
			19683	0.00119471428242999\\
			59049	0.000481774151949303\\
			177147	0.00019467437936298\\
			531441	8.02118212774332e-05\\
		};
		\addplot [color=mycolor2,solid,line width=0.7pt,mark=triangle,mark options={solid},forget plot]
		table[row sep=crcr]{%
			81	0.10377879487224\\
			243	0.0422592834038276\\
			729	0.0171913155993051\\
			2187	0.00761965658619845\\
			6561	0.00309193487622798\\
			19683	0.00126631052607984\\
			59049	0.000508438846123129\\
			177147	0.000207548207797968\\
			531441	8.34361563807986e-05\\
		};
		\addplot [color=mycolor3,solid,line width=0.7pt,mark=o,mark options={solid},forget plot]
		table[row sep=crcr]{%
			81	0.4064129110936\\
			243	0.304618575995613\\
			729	0.213364881481604\\
			2187	0.159382712192986\\
			6561	0.110058147420463\\
			19683	0.0812112869419721\\
			59049	0.0569369251969369\\
			177147	0.0411207941927234\\
			531441	0.0285709253445273\\
		};
		\addplot [color=mycolor4,solid,line width=0.7pt,mark=diamond,mark options={solid},forget plot]
		table[row sep=crcr]{%
			81	0.406412911093599\\
			243	0.304618575995613\\
			729	0.213342994682836\\
			2187	0.15939351677634\\
			6561	0.110041766833969\\
			19683	0.0812053330581222\\
			59049	0.0569407479079832\\
			177147	0.0411207197445899\\
			531441	0.028571560107173\\
		};
		\addplot [color=mycolor5,dotted,line width=1.0pt]
		table[row sep=crcr]{%
			81	0.4064129110936\\
			243	0.168760379202836\\
			729	0.0700766752514067\\
			2187	0.0290988941686888\\
			6561	0.0120831309248444\\
			19683	0.00501744334683457\\
			59049	0.00208346147163995\\
			177147	0.000865144138906255\\
			531441	0.000359245607020849\\
		};
		\addlegendentry{$\mathcal{O}(N^{-0.80})$};
		\end{axis}
		\end{tikzpicture}
		\caption{Weight sequence $\bsgamma=(\gamma_j)_{j=1}^s$ with $\gamma_j = 1/j^3$.}
	\end{subfigure}
	\begin{subfigure}[b]{0.5\textwidth}  
		\centering 
		\begin{tikzpicture}
		\begin{axis}[%
		width=0.8\textwidth,
		height=0.8\textwidth,
		at={(0\textwidth,0\textwidth)},
		scale only axis,
		xmode=log,
		xmin=10,
		xmax=1000000,
		xminorticks=true,
		xlabel={Number of points $N=b^m$},
		xmajorgrids,
		ymode=log,
		ymin=1e-06,
		ymax=0.1,
		yminorticks=true,
		ylabel={Worst-case error $e_{N,s}(\mathbf{z})$},
		ymajorgrids,
		axis background/.style={fill=white},
		legend style={legend cell align=left,align=left,draw=white!15!black}
		]
		\addplot [color=mycolor1,solid,line width=0.7pt,mark=square,mark options={solid},forget plot]
		table[row sep=crcr]{%
			81	0.0240585176237469\\
			243	0.00818001762519953\\
			729	0.00278539219469362\\
			2187	0.000936990159575669\\
			6561	0.000316351369633843\\
			19683	0.000106411433182955\\
			59049	3.6083453360805e-05\\
			177147	1.2161009192294e-05\\
			531441	4.12587294908273e-06\\
		};
		\addplot [color=mycolor2,solid,line width=0.7pt,mark=triangle,mark options={solid},forget plot]
		table[row sep=crcr]{%
			81	0.0240585176237607\\
			243	0.00817977002293763\\
			729	0.00278560614292497\\
			2187	0.000937323836234134\\
			6561	0.000316369633749238\\
			19683	0.000107366439105026\\
			59049	3.60834872058126e-05\\
			177147	1.21612465534906e-05\\
			531441	4.1354682341823e-06\\
		};
		\addplot [color=mycolor3,solid,line width=0.7pt,mark=o,mark options={solid},forget plot]
		table[row sep=crcr]{%
			81	0.0287582141641256\\
			243	0.0101609116644318\\
			729	0.00352992686615639\\
			2187	0.00124199854905788\\
			6561	0.000429054263436056\\
			19683	0.000148019169431172\\
			59049	5.07079027570693e-05\\
			177147	1.73693281108935e-05\\
			531441	5.95677529845048e-06\\
		};
		\addplot [color=mycolor4,solid,line width=0.7pt,mark=diamond,mark options={solid},forget plot]
		table[row sep=crcr]{%
			81	0.0287582141641218\\
			243	0.010160911664399\\
			729	0.00352992686584188\\
			2187	0.00124307140434386\\
			6561	0.000429055438724709\\
			19683	0.000147998786072723\\
			59049	5.07197025074176e-05\\
			177147	1.7438108955795e-05\\
			531441	5.98147533569857e-06\\
		};
		\addplot [color=mycolor5,dotted,line width=1.0pt]
		table[row sep=crcr]{%
			81	0.0575164283282513\\
			243	0.0191721427760838\\
			729	0.00639071425869459\\
			2187	0.00213023808623153\\
			6561	0.000710079362077176\\
			19683	0.000236693120692392\\
			59049	7.8897706897464e-05\\
			177147	2.6299235632488e-05\\
			531441	8.766411877496e-06\\
		};
		\addlegendentry{$\mathcal{O}(N^{-1})$};
		\end{axis}
		\end{tikzpicture}
		\caption{Weight sequence $\bsgamma=(\gamma_j)_{j=1}^s$ with $\gamma_j = 1/j^8$.}
	\end{subfigure}
	\vskip\baselineskip
	\begin{tikzpicture}
	\hspace{0.05\linewidth}
	\begin{customlegend}[legend columns=4,legend style={align=left,draw=none,column sep=2ex},legend entries={CBC,SCS,reduced CBC,reduced SCS}]
	\addlegendimage{color=mycolor1, mark=square,solid,line width=1.0pt,line legend}
	\addlegendimage{color=mycolor2, mark=triangle,solid,line width=1.0pt}  
	\addlegendimage{color=mycolor3, mark=o,solid,line width=1.0pt}
	\addlegendimage{color=mycolor4, mark=diamond,solid,line width=1.0pt}
	\end{customlegend}
	\end{tikzpicture}
	\caption{Convergence of the worst-case error $e_{N,s}(\bsz)$ in the weighted Korobov space $\cH(K_{s,\alpha,\bsgamma})$ of
		smoothness $\alpha=2$ with $s=100, b = 3$ and integer sequence $w_j = \lfloor \frac72 \log_b j \rfloor$. The generating vector $\bsz$
		is constructed via the reduced and unreduced CBC construction and the reduced and unreduced SCS algorithm, respectively.}  
\end{figure}

\newpage

As expected, Figures 1 and 2 illustrate that for weight sequences of geometric decay the convergence order is the same for the reduced and unreduced algorithms (see Cases (a) and (b) in both figures). Furthermore, note that the weights $\gamma_j = 1/j^3$ do not satisfy condition \eqref{ineq:condition} for any of the chosen $w_j$ such that, as we expected, the pre-asymptotic convergence order displayed by the unreduced CBC and SCS constructions is better than that of the reduced CBC and SCS constructions. This becomes evident by considering Case (c) in Figures 1 and 2. Note that the choice of $\gamma_j = 1/j^8$ and $w_j = \lfloor \frac72 \log_b j \rfloor$ also does not satisfy $a > 2(1+c)$, however, we still observe almost no difference between the reduced and the unreduced algorithms (see Case (d) in Figure 2). As can be seen from Figures 1 and 2, the error rates of the reduced and unreduced version of the algorithms are the same if the condition from Corollary \ref{corlatticeproduct} holds, but they differ by a multiplicative constant. The larger the $w_j$, the larger this multiplicative constant is (compare between Figures 1 and 2). An explanation for this observation is given by identifying the observed constant with the constant $C_{s,\alpha,\bsgamma,\delta}$ of Corollary \ref{corlatticeproduct}.

\subsection{Timings for the reduced fast SCS algorithm}

Here, we illustrate the computational complexity of the reduced fast SCS construction in Algorithm \ref{alg:fast_red_scs} which was stated 
in Theorem \ref{thm:computational_complexity_lat}. For that purpose, let $b=2$ and $N=b^m$ and let the weight sequence 
$\bsgamma = (\gamma_j)_{j \ge 1}$ be given by $\gamma_j = (0.7)^j$. Note that the choice of the weights $\gamma_j$ does not
influence the construction cost of the considered algorithms. In Tables \ref{tab:CBC_sequence1}, \ref{tab:SCS_sequence1} and  
Tables \ref{tab:CBC_sequence2}, \ref{tab:SCS_sequence2} below, we report on the computation times for the construction 
of the generating vector $\bsz$ via the four considered algorithms for the two reductions given by $w_j = \lfloor \frac32 \log_b j \rfloor$ 
and $w_j = \lfloor 3 \log_b j \rfloor$, respectively. Again, the two SCS algorithms (cf. Tables \ref{tab:SCS_sequence1} and
\ref{tab:SCS_sequence2}) are seeded with initial vectors $\bsz^0 = (Y_1,\ldots,Y_s)$ and $\bsz^0 = (1,\ldots,1)$, respectively. 
We emphasize that the used algorithms solely construct the generating vector $\bsz$ but do not calculate the worst-case 
error $e_{N,s}(\bsz)$, which allows for an unbiased comparison between the considered algorithms. The computations and timings
were performed on an Intel Core i5-2400S CPU with 2.5GHz using Matlab.

\begin{table}[H]
	\captionof{table}{Computation times (in seconds) for constructing the generating vector $\bsz$ using the unreduced (normal font) and reduced CBC (\textbf{bold font}) construction. 
	The associated lattice can be used for integration in the Korobov space with $\alpha=2, b=2, \gamma_j=(0.7)^j$ and $w_j= \left \lfloor{\frac32 \log_b j}\right \rfloor$.}	
	\label{tab:CBC_sequence1}
	\centering
	\begin{tabular}{p{1.8cm}p{1.8cm}p{1.8cm}p{1.8cm}p{1.8cm}p{1.8cm}l}
		\toprule[1.2pt] 
		& $s=50$ & $s=100$ & $s=500$ & $s=1000$ & $s=2000$ & $s^{\ast}$ \\ 
		\toprule[1.2pt] 
		\multirow{2}{4.5em}{$m=10$} & 0.0183 & 0.0329 & 0.163 & 0.32 & 0.64 & \multirow{2}{4em}{101} \\ 
		& \textbf{0.00963} & \textbf{0.00999} & \textbf{0.0106} & \textbf{0.00994} & \textbf{0.0102} \\ 
		\midrule 
		\multirow{2}{4.5em}{$m=12$} 
		& 0.0319 & 0.0485 & 0.239 & 0.475 & 0.944 & \multirow{2}{4em}{255} \\ 
		& \textbf{0.0139} & \textbf{0.0178} & \textbf{0.0295} & \textbf{0.0279} & \textbf{0.0273} \\ 
		\midrule 
		\multirow{2}{4.5em}{$m=14$} 
		& 0.0476 & 0.0899 & 0.425 & 0.861 & 1.74 & \multirow{2}{4em}{645} \\ 
		& \textbf{0.0216} & \textbf{0.0308} & \textbf{0.0806} & \textbf{0.0944} & \textbf{0.0915} \\ 
		\midrule 
		\multirow{2}{4.5em}{$m=16$} 
		& 0.129 & 0.24 & 1.21 & 2.46 & 4.72 & \multirow{2}{4em}{1625} \\ 
		& \textbf{0.0428} & \textbf{0.0744} & \textbf{0.264} & \textbf{0.448} & \textbf{0.619} \\ 
		\midrule 
		\multirow{2}{4.5em}{$m=18$} 
		& 0.424 & 0.829 & 4.14 & 8.45 & 16.8 & \multirow{2}{4em}{4095} \\ 
		& \textbf{0.108} & \textbf{0.178} & \textbf{0.696} & \textbf{1.33} & \textbf{2.65} \\ 
		\midrule 
		\multirow{2}{4.5em}{$m=20$} 
		& 2.23 & 4.22 & 21.5 & 43.2 & 87.2 & \multirow{2}{4em}{10321} \\ 
		& \textbf{0.484} & \textbf{0.839} & \textbf{3.91} & \textbf{7.11} & \textbf{14.2} \\ 
		\midrule
	\end{tabular}
\end{table}

\begin{table}[H]
	\captionof{table}{Computation times (in seconds) for constructing the generating vector $\bsz$ using the unreduced (normal font) and reduced SCS (\textbf{bold font}) construction. The associated lattice can be used for integration in the Korobov space with $\alpha=2, b=2, \gamma_j=(0.7)^j$ and $w_j= \left \lfloor{\frac32 \log_b j}\right \rfloor$.}	
	\label{tab:SCS_sequence1}
	\centering
	\begin{tabular}{p{1.8cm}p{1.8cm}p{1.8cm}p{1.8cm}p{1.8cm}p{1.8cm}l}
		\toprule[1.2pt] 
		& $s=50$ & $s=100$ & $s=500$ & $s=1000$ & $s=2000$ & $s^{\ast}$ \\ 
		\toprule[1.2pt] 
		\multirow{2}{6em}{$m=10$} 
		& 0.0311 & 0.0524 & 0.258 & 0.509 & 1.02 & \multirow{2}{4em}{101} \\ 
		& \textbf{0.0186} & \textbf{0.0155} & \textbf{0.0159} & \textbf{0.016} & \textbf{0.0155} \\ 
		\midrule 
		\multirow{2}{6em}{$m=12$} 
		& 0.0458 & 0.0763 & 0.381 & 0.744 & 1.5 & \multirow{2}{4em}{255} \\ 
		& \textbf{0.0249} & \textbf{0.0316} & \textbf{0.0469} & \textbf{0.0468} & \textbf{0.0458} \\ 
		\midrule 
		\multirow{2}{6em}{$m=14$} 
		& 0.088 & 0.14 & 0.687 & 1.37 & 2.75 & \multirow{2}{4em}{645} \\ 
		& \textbf{0.0427} & \textbf{0.0668} & \textbf{0.175} & \textbf{0.205} & \textbf{0.196} \\ 
		\midrule 
		\multirow{2}{6em}{$m=16$} 
		& 0.202 & 0.397 & 1.93 & 3.88 & 7.73 & \multirow{2}{4em}{1625} \\ 
		& \textbf{0.0838} & \textbf{0.148} & \textbf{0.536} & \textbf{0.89} & \textbf{1.18} \\ 
		\midrule 
		\multirow{2}{6em}{$m=18$} 
		& 0.685 & 1.33 & 6.58 & 13.2 & 27.1 & \multirow{2}{4em}{4095} \\ 
		& \textbf{0.217} & \textbf{0.376} & \textbf{1.54} & \textbf{2.91} & \textbf{5.69} \\ 
		\midrule 
		\multirow{2}{6em}{$m=20$} 
		& 3.33 & 6.62 & 33.5 & 65.9 & 135 & \multirow{2}{4em}{10321} \\ 
		& \textbf{1.06} & \textbf{1.9} & \textbf{8.7} & \textbf{16.7} & \textbf{32.5} \\ 
		\midrule 
	\end{tabular}
\end{table}

\begin{table}[H]
	\captionof{table}{Computation times (in seconds) for constructing the generating vector $\bsz$ using the unreduced (normal font) and reduced CBC (\textbf{bold font}) construction. The associated lattice can be used for integration in the Korobov space with $\alpha=2, b=2, \gamma_j=(0.7)^j$ and $w_j= \left \lfloor{3 \log_b j}\right \rfloor$.}	
	\label{tab:CBC_sequence2}
	\centering
	\begin{tabular}{p{1.8cm}p{1.8cm}p{1.8cm}p{1.8cm}p{1.8cm}p{1.8cm}l}
		\toprule[1.2pt] 
		& $s=50$ & $s=100$ & $s=500$ & $s=1000$ & $s=2000$ & $s^{\ast}$ \\ 
		\toprule[1.2pt] 
		\multirow{2}{6em}{$m=10$} 
		& 0.0173 & 0.0329 & 0.16 & 0.323 & 0.636 & \multirow{2}{4em}{10} \\ 
		& \textbf{0.00298} & \textbf{0.00206} & \textbf{0.00218} & \textbf{0.00222} & \textbf{0.00241} \\ 
		\midrule 
		\multirow{2}{6em}{$m=12$} 
		& 0.0256 & 0.0481 & 0.241 & 0.48 & 0.953 & \multirow{2}{4em}{15} \\ 
		& \textbf{0.00358} & \textbf{0.00365} & \textbf{0.0037} & \textbf{0.00354} & \textbf{0.00439} \\ 
		\midrule 
		\multirow{2}{6em}{$m=14$} 
		& 0.0469 & 0.0851 & 0.438 & 0.856 & 1.88 & \multirow{2}{4em}{25} \\ 
		& \textbf{0.00803} & \textbf{0.00761} & \textbf{0.0105} & \textbf{0.00712} & \textbf{0.00747} \\ 
		\midrule 
		\multirow{2}{6em}{$m=16$} 
		& 0.14 & 0.239 & 1.33 & 2.49 & 5.05 & \multirow{2}{4em}{40} \\ 
		& \textbf{0.0237} & \textbf{0.0233} & \textbf{0.0233} & \textbf{0.0227} & \textbf{0.0251} \\ 
		\midrule 
		\multirow{2}{6em}{$m=18$} 
		& 0.443 & 0.832 & 4.44 & 8.54 & 17.1 & \multirow{2}{4em}{63} \\ 
		& \textbf{0.0798} & \textbf{0.0897} & \textbf{0.0915} & \textbf{0.091} & \textbf{0.09} \\ 
		\midrule 
		\multirow{2}{6em}{$m=20$} 
		& 2.17 & 4.17 & 21.5 & 42.4 & 84.3 & \multirow{2}{4em}{101} \\ 
		& \textbf{0.38} & \textbf{0.623} & \textbf{0.643} & \textbf{0.636} & \textbf{0.628} \\ 
		\midrule
	\end{tabular}
\end{table}

\begin{table}[H]
	\captionof{table}{Computation times (in seconds) for constructing the generating vector $\bsz$ using the unreduced (normal font) and reduced SCS (\textbf{bold font}) construction. The associated lattice can be used for integration in the Korobov space with $\alpha=2, b=2, \gamma_j=(0.7)^j$ and $w_j= \left \lfloor{3 \log_b j}\right \rfloor$.}	
	\label{tab:SCS_sequence2}
	\centering
	\begin{tabular}{p{1.8cm}p{1.8cm}p{1.8cm}p{1.8cm}p{1.8cm}p{1.8cm}l}
		\toprule[1.2pt] 
		& $s=50$ & $s=100$ & $s=500$ & $s=1000$ & $s=2000$ & $s^{\ast}$ \\ 
		\toprule[1.2pt] 
		\multirow{2}{6em}{$m=10$} 
		& 0.0275 & 0.0516 & 0.256 & 0.516 & 1.03 & \multirow{2}{4em}{10} \\ 
		& \textbf{0.00408} & \textbf{0.00327} & \textbf{0.00354} & \textbf{0.00347} & \textbf{0.00329} \\ 
		\midrule 
		\multirow{2}{6em}{$m=12$} 
		& 0.0418 & 0.0751 & 0.383 & 0.756 & 1.56 & \multirow{2}{4em}{15} \\ 
		& \textbf{0.00592} & \textbf{0.00504} & \textbf{0.00612} & \textbf{0.00516} & \textbf{0.00794} \\ 
		\midrule 
		\multirow{2}{6em}{$m=14$} 
		& 0.0792 & 0.14 & 0.767 & 1.39 & 2.82 & \multirow{2}{4em}{25} \\ 
		& \textbf{0.014} & \textbf{0.0136} & \textbf{0.0163} & \textbf{0.0138} & \textbf{0.0138} \\ 
		\midrule 
		\multirow{2}{6em}{$m=16$} 
		& 0.204 & 0.388 & 2.09 & 4.05 & 8.04 & \multirow{2}{4em}{40} \\ 
		& \textbf{0.0441} & \textbf{0.0434} & \textbf{0.0434} & \textbf{0.0423} & \textbf{0.0462} \\ 
		\midrule 
		\multirow{2}{6em}{$m=18$} 
		& 0.686 & 1.35 & 6.89 & 13.7 & 26.8 & \multirow{2}{4em}{63} \\ 
		& \textbf{0.16} & \textbf{0.177} & \textbf{0.182} & \textbf{0.183} & \textbf{0.187} \\ 
		\midrule 
		\multirow{2}{6em}{$m=20$} 
		& 3.28 & 6.71 & 34.4 & 67.4 & 132 & \multirow{2}{4em}{101} \\ 
		& \textbf{0.843} & \textbf{1.4} & \textbf{1.51} & \textbf{1.37} & \textbf{1.36} \\ 
		\midrule
	\end{tabular}
\end{table}

According to Theorem \ref{thm:computational_complexity_lat} and \cite{DKLP15}, the reduced fast CBC and SCS algorithm both construct
a generating vector in $\mathcal{O} \left(m b^m + \min\{s,s^{\ast}\} \, b^m + \sum_{d=1}^{\min\{s,s^{\ast}\}} (m-w_d) b^{m-w_d} \right)$ operations while the unreduced constructions require $\mathcal{O} \left(s m b^m\right)$ operations. Tables 1 to 4 illustrate a drastic reduction of the construction cost between the classic (fast) unreduced CBC and SCS constructions and their reduced counterparts. The magnitude of this speed-up depends on the chosen reduction indices $w_j$. For values of $N=2^{18}$ and $N=2^{20}$ and dimensions $s=1000$ and $s=2000$, the reduction factor ranges from $4$ to $6.5$ and $49$ to $190$ for reductions of $w_j= \left \lfloor{\frac32 \log_b j}\right \rfloor$ and $w_j= \left \lfloor{3 \log_b j}\right \rfloor$, respectively. The higher the dimension $s$ is, the larger the reduction of the computational cost becomes. Furthermore,
the results in Tables 1 to 4 reveal that, for a particular fixed $m$, the computational complexity of the reduced constructions is linear in $s$
as long as $s < s^{\ast}$ and becomes independent of the dimension for $s \ge s^{\ast}$. By considering the cases where $s < s^{\ast}$, this also
demonstrates that a certain part of the achieved reduction originates from reducing the size of the search space for each component $z_j$.

We further note that the speed-up for the reduced fast CBC construction is higher than for the reduced fast SCS algorithm, however, both lie in similar ranges. Our results show that the reduced constructions yield a considerable reduction of the computational cost while the deterioration of the associated error values is only marginal (see Subsections \ref{subsec: error_convergence} and  \ref{subsec: worst-case error}). Similar results have been observed in \cite{DKLP15} for the reduced fast CBC construction. We would like to stress that, as we expected, our implementations of the different CBC constructions in Matlab appear to be much faster than the CBC algorithms used in \cite{DKLP15} (cf. Table \ref{tab:CBC_sequence1}) which were implemented in Mathematica.

\subsection{Analysis of the worst-case errors} \label{subsec: worst-case error}

We investigate the precise values of the worst-case errors $e_{N,s}(\bsz)$ for generating vectors $\bsz$ constructed by the reduced and unreduced
CBC constructions and the reduced SCS construction. Based on the results in Subsection \ref{subsec: error_convergence}, we expect the error values of the two reduced algorithms to be very similar. Let $b=3$ and $N=b^m$ and consider 
a dimension of $s=100$. In Tables \ref{tab:error_values_seq1} and \ref{tab:error_values_seq2} we display the results of numerical tests for different weight sequences 
$\bsgamma = (\gamma_j)_{j \in \NN}$ and reduction indices $w_j= \left \lfloor{\frac32 \log_b j}\right \rfloor$
and $w_j= \left \lfloor{\frac52 \log_b j}\right \rfloor$. For the construction of $\bsz$ via the reduced SCS algorithm we have to choose
suitable initial vectors $\bsz^0$ of the form (\ref{form_initial_vector}). In our experiments we thus consider $q$ different seed vectors with
\begin{align*}
	\bsz^0
	&=
	(Y_1 \bar{z}_1,\ldots,Y_s \bar{z}_s),
\end{align*} 
where the $\bar{z}_j$ are uniform random draws from the set $\mathcal{Z}_{N,w_j}$ for all $j \in \{1,\ldots,s\}$. The reduced fast SCS
algorithm is then applied to all of these $q$ seed vectors $\bsz^0_1,\ldots,\bsz^0_q$ yielding generating vectors 
$\bsz^1_1,\ldots,\bsz^1_q$. The smallest associated worst-case error of these $q$ vectors is then displayed in the tables below. 
Note that the construction cost for this procedure is $\mathcal{O} \left( q m b^m + q \min\{s,s^{\ast}\} \, b^m \right)$, which is feasible for small $q$.
Additionally, we consider the behavior of the reduced SCS algorithm when applied iteratively to the previous outcome 
of the algorithm, i.e., we apply the SCS algorithm to the outcomes $\bsz^1_1,\ldots,\bsz^1_q$ which yields generating vectors 
$\bsz^2_1,\ldots,\bsz^2_q$ that are then again used as seeds for the next iteration and so on, until the algorithm converges to some generating vectors $\bsz_1, \ldots, \bsz_q$. Our empirical numerical experiments suggest that, for the considered cases, this procedure already converges after two runs of the reduced SCS algorithm. Thus, the construction cost for this approach only increases by a factor of $2$.

\begin{table}[H]
	\captionof{table}{$\log_{10}$-worst-case errors $\log_{10} e_{N,s}(\bsz)$ with generating vector $\bsz$ being either constructed via the unreduced (normal font) and reduced ($\overline{\text{overlined}}$) CBC construction or being the best vector constructed by the reduced SCS algorithm with a single run (\textbf{bold font}) or multiple runs ($\underline{\text{underlined}}$). The errors are computed for the Korobov space with $\alpha=2, s=100, b=3, q=100$ and $w_j= \left \lfloor{\frac32 \log_b j}\right \rfloor$.}	
	\centering
	\begin{tabular}{p{2.2cm}p{1.8cm}p{1.8cm}p{1.8cm}p{1.8cm}p{1.8cm}p{1.8cm}} 
		\toprule[1.2pt] 
		& $m = 6$ & $m = 7$ & $m = 8$ & $m = 9$ & $m = 10$ & $m = 11$ \\ 
		\toprule[1.2pt] 
		\multirow{3}{6em}{$\gamma_j=(0.7)^j$} 
		& $-0.4281$ & $-0.7065$ & $-0.9928$ & $-1.283$ & $-1.58$ & $-1.881$ \\ 
		& $\overline{-0.4033}$ & $\overline{-0.685}$ & $\overline{-0.9783}$ & $\overline{-1.265}$ & $\overline{-1.564}$ & $\overline{-1.869}$ \\ 
		& $\mathbf{-0.418}$ & $\mathbf{-0.6934}$ & $\mathbf{-0.9783}$ & $\mathbf{-1.266}$ & $\mathbf{-1.559}$ & $\mathbf{-1.865}$ \\ 
		& $\underline{-0.418}$ & $\underline{-0.6937}$ & $\underline{-0.9783}$ & $\underline{-1.266}$ & $\underline{-1.561}$ & $\underline{-1.865}$ \\
		\midrule 
		\multirow{3}{6em}{$\gamma_j=(0.5)^j$} 
		& $-1.442$ & $-1.804$ & $-2.162$ & $-2.521$ & $-2.889$ & $-3.271$ \\ 
		& $\overline{-1.404}$ & $\overline{-1.771}$ & $\overline{-2.145}$ & $\overline{-2.502}$ & $\overline{-2.879}$ & $\overline{-3.254}$ \\ 
		& $\mathbf{-1.422}$ & $\mathbf{-1.783}$ & $\mathbf{-2.138}$ & $\mathbf{-2.497}$ & $\mathbf{-2.863}$ & $\mathbf{-3.236}$ \\ 
		& $\underline{-1.423}$ & $\underline{-1.783}$ & $\underline{-2.138}$ & $\underline{-2.497}$ & $\underline{-2.864}$ & $\underline{-3.236}$ \\
		\midrule 
		\multirow{3}{6em}{$\gamma_j=1/j^3$} 
		& $-1.754$ & $-2.146$ & $-2.532$ & $-2.923$ & $-3.317$ & $-3.711$ \\ 
		& $\overline{-1.602}$ & $\overline{-2.008}$ & $\overline{-2.452}$ & $\overline{-2.817}$ & $\overline{-3.258}$ & $\overline{-3.66}$ \\ 
		& $\mathbf{-1.618}$ & $\mathbf{-2.037}$ & $\mathbf{-2.441}$ & $\mathbf{-2.851}$ & $\mathbf{-3.245}$ & $\mathbf{-3.635}$ \\ 
		& $\underline{-1.619}$ & $\underline{-2.037}$ & $\underline{-2.441}$ & $\underline{-2.851}$ & $\underline{-3.245}$ & $\underline{-3.637}$ \\ 
		\midrule 
		\multirow{3}{6em}{$\gamma_j=1/j^6$} 
		& $-2.44$ & $-2.904$ & $-3.364$ & $-3.83$ & $-4.286$ & $-4.75$ \\ 
		& $\overline{-2.439}$ & $\overline{-2.904}$ & $\overline{-3.364}$ & $\overline{-3.828}$ & $\overline{-4.288}$ & $\overline{-4.749}$ \\ 
		& $\mathbf{-2.44}$ & $\mathbf{-2.904}$ & $\mathbf{-3.365}$ & $\mathbf{-3.831}$ & $\mathbf{-4.288}$ & $\mathbf{-4.749}$ \\ 
		& $\underline{-2.44}$ & $\underline{-2.904}$ & $\underline{-3.365}$ & $\underline{-3.831}$ & $\underline{-4.288}$ & $\underline{-4.749}$ \\
		\midrule 
	\end{tabular}
	\label{tab:error_values_seq1}
\end{table}

\begin{table}[H]
	\captionof{table}{$\log_{10}$-worst-case errors $\log_{10} e_{N,s}(\bsz)$ with generating vector $\bsz$ being either constructed via the unreduced (normal font) and reduced ($\overline{\text{overlined}}$) CBC construction or being the best vector constructed by the reduced SCS algorithm with a single run (\textbf{bold font}) or multiple runs ($\underline{\text{underlined}}$). The errors are computed for the Korobov space with $\alpha=2, s=100, b=3, q=100$ and $w_j= \left \lfloor{\frac52 \log_b j}\right \rfloor$.}	
	\centering
	\begin{tabular}{p{2.2cm}p{1.8cm}p{1.8cm}p{1.8cm}p{1.8cm}p{1.8cm}p{1.8cm}} 
		\toprule[1.2pt] 
		& $m = 6$ & $m = 7$ & $m = 8$ & $m = 9$ & $m = 10$ & $m = 11$ \\ 
		\toprule[1.2pt] 
		\multirow{3}{6em}{$\gamma_j=(0.7)^j$} 
		& $-0.4281$ & $-0.7065$ & $-0.9928$ & $-1.283$ & $-1.58$ & $-1.881$ \\ 
		& $\overline{-0.1983}$ & $\overline{-0.5021}$ & $\overline{-0.807}$ & $\overline{-1.122}$ & $\overline{-1.426}$ & $\overline{-1.747}$ \\ 
		& $\mathbf{-0.2023}$ & $\mathbf{-0.5136}$ & $\mathbf{-0.8233}$ & $\mathbf{-1.129}$ & $\mathbf{-1.437}$ & $\mathbf{-1.747}$ \\ 
		& $\underline{-0.2029}$ & $\underline{-0.5145}$ & $\underline{-0.8277}$ & $\underline{-1.134}$ & $\underline{-1.442}$ & $\underline{-1.75}$ \\
		\midrule 
		\multirow{3}{6em}{$\gamma_j=(0.5)^j$} 
		& $-1.442$ & $-1.804$ & $-2.162$ & $-2.521$ & $-2.889$ & $-3.271$ \\ 
		& $\overline{-1.113}$ & $\overline{-1.515}$ & $\overline{-1.901}$ & $\overline{-2.33}$ & $\overline{-2.703}$ & $\overline{-3.11}$ \\ 
		& $\mathbf{-1.116}$ & $\mathbf{-1.524}$ & $\mathbf{-1.931}$ & $\mathbf{-2.317}$ & $\mathbf{-2.709}$ & $\mathbf{-3.094}$ \\ 
		& $\underline{-1.119}$ & $\underline{-1.527}$ & $\underline{-1.931}$ & $\underline{-2.325}$ & $\underline{-2.717}$ & $\underline{-3.1}$ \\ 
		\midrule 
		\multirow{3}{6em}{$\gamma_j=1/j^3$} 
		& $-1.754$ & $-2.146$ & $-2.532$ & $-2.923$ & $-3.317$ & $-3.711$ \\ 
		& $\overline{-0.9724}$ & $\overline{-1.181}$ & $\overline{-1.391}$ & $\overline{-1.622}$ & $\overline{-1.919}$ & $\overline{-2.396}$ \\ 
		& $\mathbf{-0.973}$ & $\mathbf{-1.182}$ & $\mathbf{-1.392}$ & $\mathbf{-1.622}$ & $\mathbf{-1.92}$ & $\mathbf{-2.396}$ \\ 
		& $\underline{-0.973}$ & $\underline{-1.182}$ & $\underline{-1.392}$ & $\underline{-1.622}$ & $\underline{-1.92}$ & $\underline{-2.396}$ \\ 
		\midrule 
		\multirow{3}{6em}{$\gamma_j=1/j^6$} 
		& $-2.44$ & $-2.904$ & $-3.364$ & $-3.83$ & $-4.286$ & $-4.75$ \\ 
		& $\overline{-2.361}$ & $\overline{-2.81}$ & $\overline{-3.268}$ & $\overline{-3.728}$ & $\overline{-4.191}$ & $\overline{-4.657}$ \\ 
		& $\mathbf{-2.362}$ & $\mathbf{-2.811}$ & $\mathbf{-3.269}$ & $\mathbf{-3.728}$ & $\mathbf{-4.191}$ & $\mathbf{-4.657}$ \\ 
		& $\underline{-2.362}$ & $\underline{-2.811}$ & $\underline{-3.269}$ & $\underline{-3.728}$ & $\underline{-4.191}$ & $\underline{-4.657}$ \\ 
		\midrule 
	\end{tabular}
	\label{tab:error_values_seq2}
\end{table}

The results in Tables \ref{tab:error_values_seq1} and \ref{tab:error_values_seq2} show that the reduced fast SCS algorithm 
generates lattice rules with similar errors as the reduced CBC algorithm as was to be expected from the results in Section 
\ref{subsec: error_convergence}. Furthermore, we note that it is possible to obtain better error values than with the reduced CBC construction,
at the price of increased computational costs. It becomes also evident that, in certain cases, applying the reduced SCS algorithm
repeatedly yields even further, though rather small, improvement. However, whether that strategy is successful or not depends strongly on the various involved parameters 
such that a general quantitative statement can currently not be inferred. Theorem \ref{thm: z0-z} ensures that the associated worst-case error never 
increases by repeated runs of the SCS algorithm. A systematic analysis of the effect of repetition of the SCS algorithm 
is left open for future research. The loss of accuracy as compared to the classic CBC construction is for both reduced algorithms only marginal. 
The only exception to this is the case $\gamma_j=1/j^3$ in Table \ref{tab:error_values_seq2}. As discussed in Subsection \ref{subsec: error_convergence}, 
this is most likely due to the fact that the weights $\gamma_j$ do not decay fast enough (cf. Case (c) in Figure 2).

\section{Walsh spaces and polynomial lattice point sets} \label{secWal}

\subsection{Walsh spaces}

Similar results to those for lattice point sets from the previous sections can be shown for polynomial lattice 
point sets over finite fields $\FF_b$ of prime order $b$ with modulus $x^m$. Here we only sketch these results and the necessary notation, 
as they are in analogy to those for Korobov spaces and lattice point sets. 

\medskip

As a quality criterion we use the worst-case error of QMC rules in a weighted Walsh space, as introduced in 
\cite{DP05} for the case of product weights, with general weights. 
For a prime number $b$ and $h \in \NN$ define $\psi_b(h) := \lfloor \log_b(h)\rfloor$. We furthermore write
\begin{align*}
r_\alpha (h)=\begin{cases}
           1 & \mbox{if $h=0$,}\\
           b^{-\alpha \psi_b (h)} & \mbox{if $h\neq 0$,}
          \end{cases}
\end{align*}
for $h\in\NN_0$ and set
\begin{align*}
\mu_b (\alpha):=\sum_{h=1}^\infty r_{\alpha} (h)= \sum_{a=0}^\infty \frac{1}{b^{a\alpha}} \sum_{k=b^a}^{b^{a+1}-1} 1=\sum_{a=0}^\infty \frac{(b-1)b^a}
{b^{a\alpha}}
=\frac{b^{\alpha}(b-1)}{b^\alpha - b}.
\end{align*} 
For the multivariate case with dimension $s\in\NN$ and $\bsh=(h_1,\ldots,h_s)$ we set $r_\alpha (\bsh)=\prod_{j=1}^s r_\alpha (h_j)$. 
Moreover, for a nonnegative integer $h$, we define the $h$-th Walsh function $\ _b\wal_h :[0,1)\To\CC$ by
\begin{align*}
\ _b\wal_h (x):=\mathrm{e}^{2\pi\icomp (x_1 h_0 + \cdots + x_{a+1} h_a)/b}
\end{align*}
with $x\in [0,1)$, and base $b$ representations $h=h_0 + h_1 b + \cdots + h_a b^a$, with $h_i\in\{0,1,\ldots,b-1\}$, and 
$x=\frac{x_1}{b} + \frac{x_2}{b^2}+\cdots$ (unique in the sense that infinitely many of the $x_i$ must be different from $b-1$).

For dimension $s\ge 2$ and vectors $\bsh=(h_1,\ldots,h_s)\in\NN_0^s$, and $\bsx=(x_1,\ldots,x_s)\in [0,1)^s$ we define 
$\ _b\wal_{\bsh}:[0,1)^s\To\CC$ by 
\begin{align*}
\ _b\wal_{\bsh} (\bsx):=\prod_{j=1}^s \ _b\wal_{h_j} (x_j).
\end{align*}
In the following, we will consider the prime number $b$ as fixed, and then simply write $\wal_h$ or $\wal_{\bsh}$ instead of 
$\  _b\wal_h$ or $\ _b\wal_{\bsh}$, respectively. \\

The weighted Walsh space $\cH(K_{s,\alpha,\bsgamma}^{\wal})$ is a reproducing kernel Hilbert space with kernel function of the form 
\begin{align*}
K_{s,\alpha,\bsgamma}^{\wal}(\bsx,\bsy) = 1+ \sum_{\emptyset \not=\uu \subseteq [s]} \gamma_{\uu} \sum_{\bsh_{\uu}\in \NN^{|\uu|}} 
r_\alpha (\bsh_{\uu})\wal_{\bsh_{\uu}}(\bsx_{\uu})\overline{\wal_{\bsh_{\uu}}(\bsy_{\uu})},
\end{align*}
and inner product
\begin{align*}
\langle f,g\rangle_{K_{s,\alpha,\bsgamma}^{\wal}}=\sum_{\uu \subseteq [s]} 
\gamma_{\uu}^{-1} \sum_{\bsh_{\uu}\in \NN^{|\uu|}} \left(r_\alpha (\bsh_{\uu})\right)^{-1} \widetilde{f}((\bsh_{\uu},\bszero)) 
\overline{\widetilde{g}((\bsh_{\uu},\bszero))},
\end{align*}
where $\widetilde{f}(\bsh)=\int_{[0,1]^s} f(\bst) \overline{\wal_{\bsh}(\bst)}\rd \bst$ is the $\bsh$-th Walsh coefficient of $f$
and $(\bsh_{\uu},\bszero) \in \NN^s$ denotes the vector whose $j$-th component is equal to the corresponding one of $\bsh_{\uu}$ if $j \in \uu$ and zero if $j \not\in \uu$.

For integration in $\cH(K_{s,\alpha,\bsgamma}^{\wal})$ we use a special instance of polynomial lattice point sets over the finite field
$\FF_b$ with prime $b$. Polynomial lattice point sets are special examples of $(t,m,s)$-nets in base $b$, which were proposed by Niederreiter in \cite{N92a} (see also \cite[Ch.~4.4]{N92b}). Let $\FF_b ((x^{-1}))$ be the field of formal Laurent series over $\FF_b$ with 
elements of the form
\begin{align*}
L=\sum_{\ell=w}^\infty t_{\ell} x^{-\ell},
\end{align*}
where $w$ is an arbitrary integer and all $t_{\ell}\in\FF_b$. Note that the field of rational functions is a subfield of $\FF_b ((x^{-1}))$. We further 
denote by $\FF_b [x]$ the set of all polynomials over $\FF_b$ and define the map $\nu: \FF_b ((x^{-1}))\To [0,1)$ by
\begin{align*}
\nu \left(\sum_{\ell =w}^\infty t_\ell x^{-\ell}\right)=\sum_{\ell=\max (1,w)}^m t_{\ell} b^{-\ell}.
\end{align*}
There is a close connection between the base $b$ expansions of natural numbers and the polynomial ring $\FF_b [x]$. For $n\in \NN_0$ with $b$-adic expansion $n=n_0 + n_1 b + \cdots + n_a b^{a}$, we associate $n$ with the polynomial
\begin{align*}
n(x):=\sum_{k=0}^a n_k x^k \in \FF_b [x].
\end{align*} 
Now, for given integers $m\ge 1$ and $s\ge 2$, choose $f\in \FF_b [x]$ with $\deg (f)=m$, and let $g_1,\ldots,g_s \in\FF_b [x]$. 
Then the point set $\cP (\bsg,f)$ is defined as the collection of the $b^m$ points 
\begin{align*}
\bsx_n:=\left(\nu\left(\frac{n\ g_1}{f}\right),\ldots,\nu\left(\frac{n\ g_s}{f}\right)\right)\ \ \ \mbox{ for }\ n \in \FF_b[x]\ \mbox{ with } 
\deg(n)<m.
\end{align*}
Note that one can restrict the choice of $g_j$ for $j=1,\ldots,s$ to the set
\begin{align*}
\{ g\in\FF_b [x]:\, \deg(g) < m \}.
\end{align*}
Due to the construction principle, $\cP (\bsg,f)$ is often called a polynomial lattice and a QMC rule using the point
set $\cP (\bsg,f)$ is referred to as a polynomial lattice rule (modulo $f$). The vector $\bsg=(g_1,\ldots,g_s)$ 
is called the generating vector. \\  

For our purposes, we only consider a special case of lattice rules, namely the special choice of $f(x)=x^m$ as
the modulus. With a slight misuse of notation, we shall often write $x^m$ instead of $f$. 
Let now $\cP(\bsg,x^m)$, where $\bsg=(g_1,\ldots,g_s) \in (\FF_b[x])^s$, be the $b^m$-element polynomial lattice consisting of 
\begin{align*}
\bsx_n:=\left(\nu\left(\frac{n\ g_1}{x^m}\right),\ldots,\nu\left(\frac{n\ g_s}{x^m}\right)\right)\ \ \ \mbox{ for }\ n \in \FF_b[x]\ \mbox{ with } 
\deg(n)<m,
\end{align*}
where for $v \in \FF_b[x]$, $v(x)=a_0+a_1 x+\cdots +a_r x^r$, with $\deg(v)=r$, the map $\nu$ is in this particular case given by 
\begin{align*}
\nu\left(\frac{v}{x^m}\right):= \frac{a_{\min(r,m-1)}}{b^{m-\min(r,m-1)}}+\cdots+\frac{a_1}{b^{m-1}}+\frac{a_0}{b^m} \in [0,1).
\end{align*}
Note that $\nu(v/x^m)=\nu((v\pmod{x^m})/x^m)$. We refer to \cite[Chapter~10]{DP10} for more information on 
polynomial lattice point sets. \\

In the following we write, for a nonnegative integer $h$ with base $b$ representation $\sum_{i=0}^a h_i b^i$, 
\begin{align*}
	\tr_m (h) = \tr_m (h)(x) := h_0 + h_1 x + \cdots + h_{m-1} x^{m-1}, 
\end{align*}
where the $h_i$ with $i>a$ are set equal to zero. For vectors of nonnegative integers $\bsh \in \NN^s$, $\tr_m(\bsh) \in (\FF_b[x])^s$ is defined component-wise. \\

The worst-case error of a polynomial lattice rule based on $\cP(\bsg,x^m)$ with $\bsg \in (\FF_b[x])^s$ in the weighted Walsh space $\cH(K_{s,\alpha,
\bsgamma}^{\wal})$ is given by (see \cite{DKPS05})
\begin{equation*}
e_{N,s}^2 (\bsg)=\sum_{\emptyset\neq\uu\subseteq [s]}\gamma_\uu 
\sum_{\bsh_{\uu} \in \D_\uu} \prod_{j \in \uu} b^{-\alpha \psi_b(h_j)},
\end{equation*}
where 
\begin{align*}
	\D_\setu (\bsg_\setu)=\D_\uu:=\left\{\bsh_\uu\in \NN^{\abs{\uu}} :\  \tr_m(\bsh_\uu) \cdot\bsg_\uu\equiv 0\, (x^m)\right\},
\end{align*}
and for $\bsv=(v_1,\ldots,v_s)$ and $\bsu=(u_1,\ldots,u_s)$ in $(\FF_b [x])^s$ we define the vector product by
$\bsv\cdot\bsu:=\sum_{i=1}^s v_i u_i$.

\subsection{The reduced SCS algorithm for polynomial lattice rules}

Let us now assume again that $f(x)=x^m$ for some integer $m$ and that we are given weights $\gamma_\setu$, $\setu\subseteq [s]$, 
and a non-decreasing sequence of integers $w_j$ with $w_1 \le w_2 \le w_3 \le \cdots$. 

Then we define the restricted search set for the $j$-th component of the generating vector $\bsg$ as 	
\begin{align*}
\mathcal{G}_{N,w_j} &= \begin{cases} \{g\in\FF_b [x]:\, 0\le \deg (g) < m-w_j\ \mbox{and}\ \gcd (g,f)=1\} & \mbox{if }  w_j < m,  \\ 
\{1\in\FF_b [x]\} & \mbox{if } w_j \ge m, \end{cases} 
\end{align*}
and note that these sets depend on the integers $w_j$. Additionally, we note that the cardinality of the search space is 
$|\mathcal{G}_{N,w_j}| = b^{m-w_j-1}$.
Moreover, we put $Y_j (x)=x^{w_j}$, and again with a misuse of notation we sometimes write $Y_j =x^{w_j}$. \\

We then consider the following algorithm for the construction of the generating vector $\bsg$ based on some initial vector $\bsg^{0}\in 
(\FF_b[x])^s$. 

\begin{algorithm}\label{alg:polylattice} 

Let $f\in\FF_b [x]$, $f(x)=x^m$ for a fixed $m\in\NN$, let $\gamma_{\setu}$, $\setu\subseteq [s]$ be 
general weights, and let the worst-case error $e_{N,s}$ in the weighted Walsh space 
$\calH (K_{s,\alpha,\bsgamma}^{\wal})$ be defined as above. Furthermore, let $w_1 \le w_2 \le \cdots \le w_s$ and $Y_j(x) = x^{w_j}$ 
for $j\in\{1,\ldots,s\}$. Then we construct the generating vector $\bsg = (Y_1 g_1, \ldots, Y_s g_s)$ as follows.
\begin{itemize}
	\item \textbf{Input:} Starting vector $\bsg^0=(g_1^0,\ldots,g_s^0) \in \{g \in \FF_b [x] :\, \deg (g) < m\}^s$. 
	\item For $d \in [s]$ assume $g_1,\ldots,g_{d-1}$ have already been selected. Then choose $g_d \in \mathcal{G}_{N,w_d}$
	such that $e^2_{N,s}((Y_1 g_1, \ldots, Y_{d-1} g_{d-1}, Y_d g_d, g_{d+1}^0, \ldots, g_s^0))$ 
	is minimized as a function of 
	$g_d$.
	\item Increase $d$ until $g_1,\ldots,g_s$ are found.
\end{itemize}
\end{algorithm}

\begin{theorem} \label{thm: reduced SCS poly}
	Let the assumptions in Algorithm \ref{alg:polylattice} hold.
	Let $\bsg = (Y_1 g_1, \ldots, Y_s g_s)$ be constructed by Algorithm \ref{alg:polylattice}. Then, for $\lambda \in (\frac{1}
	{\alpha},1]$, the squared worst-case error $e^2_{N,s}(\bsg)$ satisfies
	\begin{align*}
		e^2_{N,s}((Y_1 g_1, \ldots, Y_s g_s))
		&\le
		\left( \sum_{d=1}^{s} \sum_{d \in \setu \subseteq [s]} \gamma_{\setu}^{\, \lambda} 
		\frac{2 ( \mu_b (\alpha \lambda))^{|\setu|}}{b^{\max(0,m-w_d)}} \right)^{\frac{1}{\lambda}} .
	\end{align*}
\end{theorem}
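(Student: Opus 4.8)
The plan is to reproduce the proof of Theorem~\ref{thm: reduced SCS} almost verbatim, translating every step from the Fourier/lattice setting over $\ZZ$ to the Walsh/polynomial-lattice setting over $\FF_b[x]$. The dictionary is: the Korobov weight $\rho_\alpha$ is replaced by $r_\alpha$; the identity $\sum_{h\in\ZZ_\ast}|h|^{-\alpha}=2\zeta(\alpha)$ is replaced by $\sum_{h\ge1}r_\alpha(h)=\mu_b(\alpha)$; the congruence $\bsh_\setu\cdot\bsz_\setu\equiv0\pmod N$ is replaced by $\tr_m(\bsh_\setu)\cdot\bsg_\setu\equiv0\pmod{x^m}$; the reduced search set $\mathcal{Z}_{N,w_d}$ is replaced by $\mathcal{G}_{N,w_d}$; and $Y_d=b^{w_d}$ is replaced by $Y_d=x^{w_d}$. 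I would begin by expanding $e^2_{N,s}(\bsg)$ via the Walsh error formula and defining, in exact analogy with the proof of Theorem~\ref{thm: reduced SCS}, the quantities $G_\setu(\bsg_\setu):=\gamma_\setu\sum_{\bsh_\setu\in\D_\setu(\bsg_\setu)}r_\alpha(\bsh_\setu)$, $R_d(\bsg):=\sum_{d\in\setu\subseteq[d]}G_\setu(\bsg_\setu)$ and $\theta_d(\bsg):=\sum_{d\in\setu\subseteq[s]}G_\setu(\bsg_\setu)$, so that $e^2_{N,s}(\bsg)=\sum_{d=1}^sR_d(\bsg)\le\sum_{d=1}^s\theta_d(\bsg^{(d)})$, where $\bsg^{(d)}:=(Y_1g_1,\ldots,Y_dg_d,g_{d+1}^0,\ldots,g_s^0)$; as before, the $d$-th step of Algorithm~\ref{alg:polylattice} minimises precisely $\theta_d$ over $g_d\in\mathcal{G}_{N,w_d}$.

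Applying Jensen's inequality for $\lambda\in(\frac1\alpha,1]$ gives $(e^2_{N,s}(\bsg))^\lambda\le\sum_{d=1}^s\theta_d^\lambda(\bsg^{(d)})$, and the usual averaging argument over the reduced search set bounds $\theta_d^\lambda(\bsg^{(d)})\le|\mathcal{G}_{N,w_d}|^{-1}\sum_{g\in\mathcal{G}_{N,w_d}}\theta_d^\lambda(\hat{\bsg}^{(d)}(g))$, where $\hat{\bsg}^{(d)}(g):=(Y_1g_1,\ldots,Y_{d-1}g_{d-1},Y_dg,g_{d+1}^0,\ldots,g_s^0)$. Two further applications of Jensen split off the singleton contribution $\setu=\{d\}$ and lead to $\theta_d^\lambda(\bsg^{(d)})\le T_1+T_2$ with exactly the same shape as in the proof of Theorem~\ref{thm: reduced SCS}, now with $r_{\alpha\lambda}$ in place of $\rho_{\alpha\lambda}$ and all congruences taken modulo $x^m$; in particular, for $w_d<m$ one again writes $T_2=T_{2,1}+T_{2,2}$ according to whether $\tr_m(h_d)x^{w_d}g\equiv0\pmod{x^m}$ or not.

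The heart of the argument is the polynomial-arithmetic replacement of the divisibility facts used in the Korobov proof. For $g\in\mathcal{G}_{N,w_d}$ one has $\gcd(g,x^m)=1$, so $\tr_m(h_d)\,x^{w_d}g\equiv0\pmod{x^m}$ is equivalent to $x^{m-w_d}\mid\tr_m(h_d)$; and since $\tr_m(h_d)=h_0+h_1x+\cdots+h_{m-1}x^{m-1}$ for the base-$b$ digits $h_0,h_1,\ldots$ of $h_d$, the latter is equivalent to $b^{m-w_d}\mid h_d$. Together with $\psi_b(b^{m-w_d}h')=(m-w_d)+\psi_b(h')$ this yields $\sum_{h_d\,:\,b^{m-w_d}\mid h_d}r_{\alpha\lambda}(h_d)=b^{-\alpha\lambda(m-w_d)}\mu_b(\alpha\lambda)\le b^{-(m-w_d)}\mu_b(\alpha\lambda)$, using $\alpha\lambda>1$; this handles $T_1$ and $T_{2,1}$ in the case $w_d<m$, while the case $w_d\ge m$ is immediate since then $\mathcal{G}_{N,w_d}=\{1\}$, $x^m\mid x^{w_d}$ and $\max(0,m-w_d)=0$. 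For $T_{2,2}$ (where $w_d<m$, $b^{m-w_d}\nmid h_d$, hence $\tr_m(h_d)x^{w_d}g\not\equiv0\pmod{x^m}$) I would, exactly as for $T_2$ in the proof of Theorem~\ref{thm: reduced SCS}, discard the exact residue constraint $\sum_{j\in\setv}\tr_m(h_j)\hat{g}_j\equiv-\tr_m(h_d)x^{w_d}g\pmod{x^m}$ in favour of the weaker constraint $\sum_{j\in\setv}\tr_m(h_j)\hat{g}_j\not\equiv0$, using that the right-hand side is nonzero, and extract the factor $b^{-(m-w_d)}$ from $|\mathcal{G}_{N,w_d}|\asymp b^{m-w_d}$ together with the count of $g\in\mathcal{G}_{N,w_d}$ mapped to a given residue by $g\mapsto\tr_m(h_d)x^{w_d}g\bmod x^m$. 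Collecting the bounds for $T_1$ and $T_2$, using $\sum_{\bsh_\setv\in\NN^{|\setv|}}r_{\alpha\lambda}(\bsh_\setv)=(\mu_b(\alpha\lambda))^{|\setv|}$, and re-indexing $\setu=\setv\cup\{d\}$ gives $\theta_d^\lambda(\bsg^{(d)})\le\sum_{d\in\setu\subseteq[s]}\gamma_\setu^\lambda\,2(\mu_b(\alpha\lambda))^{|\setu|}/b^{\max(0,m-w_d)}$; summing over $d$ and raising to the power $1/\lambda$ yields the assertion.

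I expect the only real work to be in verifying the polynomial analogs just mentioned — in particular, controlling $g\mapsto\tr_m(h_d)x^{w_d}g\bmod x^m$ on $\mathcal{G}_{N,w_d}$ when $\tr_m(h_d)$ is divisible by $x$ (so several low-order base-$b$ digits of $h_d$ vanish and the map fails to be injective), which, as in the prime-power CBC analysis, is handled by bounding the multiplicity by a power of $b$ and summing the resulting geometric series in $r_{\alpha\lambda}$, convergent precisely because $\alpha\lambda>1$. Everything else — the two applications of Jensen's inequality, the averaging argument, and the bookkeeping of the constants — transfers unchanged from the proof of Theorem~\ref{thm: reduced SCS}, the only cosmetic changes being $2\zeta(\alpha\lambda)$ replaced by $\mu_b(\alpha\lambda)$ and the absence of a factor $2$ in the single-variable Walsh sum, since in the Walsh setting there are no negative frequencies.
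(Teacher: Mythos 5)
Your proposal takes exactly the route the paper intends: the paper's proof of this theorem is the single remark that the argument is analogous to that of Theorem~\ref{thm: reduced SCS}, and your translation (the Walsh error formula with $r_\alpha$ and $\mu_b(\alpha\lambda)$ replacing $\rho_\alpha$ and $2\zeta(\alpha\lambda)$, congruences modulo $x^m$, the same $R_d$/$\theta_d$ decomposition, Jensen twice, averaging over $\mathcal{G}_{N,w_d}$, the split into $T_1$, $T_{2,1}$, $T_{2,2}$, and the key equivalence $x^{m-w_d}\mid \tr_m(h_d)\Leftrightarrow b^{m-w_d}\mid h_d$ via $\gcd(g,x)=1$) is precisely that analogy. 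Your closing caveat about the map $g\mapsto \tr_m(h_d)\,x^{w_d}g \bmod x^m$ not being injective when $x\mid \tr_m(h_d)$ is actually more careful than the paper, whose mirrored lattice argument simply asserts the corresponding distinctness of the values $h_d Y_d z \bmod N$; just note that your multiplicity-times-geometric-series repair, if carried out, produces an additional factor of order $(1-b^{1-\alpha\lambda})^{-1}$ rather than the bare constant $2$ appearing in the stated bound.
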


\begin{proof}
The proof works analogously to the proof of Theorem \ref{thm: reduced SCS}.
\end{proof}

The following corollary is derived in a similar way from Theorem \ref{thm: reduced SCS poly} 
as Corollary \ref{corlatticegenweights} is derived from Theorem \ref{thm: reduced SCS}. 
\begin{corollary}\label{corpolygenweights}
Let the assumptions in Algorithm \ref{alg:polylattice} hold.
 Let $\bsg = (Y_1 g_1,\ldots,Y_s g_s)$ be constructed by Algorithm \ref{alg:polylattice}. 
Then we have for all $\delta \in (0,\frac{\alpha-1}{2}]$ that
\begin{align*}
	e_{N,s}(\bsg)
	\le
	C_{s,\alpha,\bsgamma,\delta} \; N^{-\alpha/2+\delta},
\end{align*}
where
\begin{align*}
	C_{s,\alpha,\bsgamma,\delta}
	&:=
	\left(2\sum_{d=1}^{s} \sum_{d \in \setu\subseteq [s]} \gamma_{\setu}^{\frac{1}{\alpha-2\delta}} 
	\left(\mu_b \left(\frac{\alpha}{\alpha-2\delta}\right)\right)^{|\setu|} b^{w_d}\right)^{\alpha/2-\delta}.
\end{align*}
    For $\delta\in (0,\frac{\alpha-1}{2}]$ and $q\ge 0$, define 
\begin{align*}
C_{\delta,q}:=\sup_{s\in\NN} \left[\frac{2}{s^q}\sum_{d=1}^{s}\sum_{d \in \setu\subseteq [s]} 
	\gamma_{\setu}^{\frac{1}{\alpha-2\delta}} \left(\mu_b \left(\frac{\alpha}{\alpha-2\delta}\right)\right)^{|\setu|} b^{w_d} \right].
\end{align*}
If $C_{\delta,q}<\infty$ for some $\delta\in (0,\frac{\alpha-1}{2}]$ and $q\ge 0$ then
\begin{align*}
     e_{N,s}(\bsg)\le (s^q C_{\delta,q})^{\alpha/2-\delta} N^{-\alpha/2+\delta}.
\end{align*}
If $C_{\delta,0}<\infty$ for some $\delta\in (0,\frac{\alpha-1}{2}]$ then
\begin{align*}
     e_{N,s}(\bsg)\le (C_{\delta,0})^{\alpha/2-\delta} N^{-\alpha/2+\delta}.
\end{align*}
\end{corollary}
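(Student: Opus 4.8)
The plan is to transcribe the proof of Corollary~\ref{corlatticegenweights} from Theorem~\ref{thm: reduced SCS}, now starting from Theorem~\ref{thm: reduced SCS poly} and replacing the factor $2\zeta(\cdot)$ by $\mu_b(\cdot)$ throughout. First I would fix $\lambda\in(\tfrac1\alpha,1]$; note that $\mu_b(\alpha\lambda)$ is finite precisely because $\alpha\lambda>1$, which plays here the role of the condition $\zeta(\alpha\lambda)<\infty$ in the lattice case. Theorem~\ref{thm: reduced SCS poly} then gives
\begin{align*}
\left(e^2_{N,s}(\bsg)\right)^{\lambda}
\le
\sum_{d=1}^{s}\sum_{d\in\setu\subseteq[s]}\gamma_{\setu}^{\lambda}\frac{2(\mu_b(\alpha\lambda))^{|\setu|}}{b^{\max(0,m-w_d)}}.
\end{align*}

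The key step is the same elementary inequality used in the proof of Corollary~\ref{corlatticegenweights}, namely $b^{-\max(0,m-w_d)}=b^{\min(0,w_d-m)}=b^{-m}b^{\min(m,w_d)}\le b^{-m}b^{w_d}=\tfrac1N b^{w_d}$, which upgrades the bound to
\begin{align*}
\left(e^2_{N,s}(\bsg)\right)^{\lambda}
\le
\frac2N\sum_{d=1}^{s}\sum_{d\in\setu\subseteq[s]}\gamma_{\setu}^{\lambda}(\mu_b(\alpha\lambda))^{|\setu|}b^{w_d}.
\end{align*}
Next I would substitute $\tfrac1\lambda=\alpha-2\delta$; the admissible range $\lambda\in(\tfrac1\alpha,1]$ corresponds exactly to $\delta\in(0,\tfrac{\alpha-1}{2}]$. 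Raising both sides to the power $\tfrac1{2\lambda}=\tfrac\alpha2-\delta$ and extracting the factor $N^{-\alpha/2+\delta}$ yields
\begin{align*}
e_{N,s}(\bsg)
\le
N^{-\alpha/2+\delta}\left(2\sum_{d=1}^{s}\sum_{d\in\setu\subseteq[s]}\gamma_{\setu}^{\frac{1}{\alpha-2\delta}}\left(\mu_b\!\left(\tfrac{\alpha}{\alpha-2\delta}\right)\right)^{|\setu|}b^{w_d}\right)^{\alpha/2-\delta},
\end{align*}
which is the first assertion with the stated $C_{s,\alpha,\bsgamma,\delta}$.

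For the remaining two assertions I would simply invoke the definition of $C_{\delta,q}$: the double sum appearing inside $C_{s,\alpha,\bsgamma,\delta}$, multiplied by $2$, is at most $s^qC_{\delta,q}$ whenever $C_{\delta,q}<\infty$, so $e_{N,s}(\bsg)\le(s^qC_{\delta,q})^{\alpha/2-\delta}N^{-\alpha/2+\delta}$, and the case $q=0$ is then immediate. I do not expect any genuine obstacle: the only points requiring a moment's care are that the whole argument rests on having $\mu_b(\alpha\lambda)<\infty$ on the relevant $\lambda$-range and on correctly tracking the parameter ranges through the substitution $\lambda=1/(\alpha-2\delta)$; everything else is a line-by-line copy of the lattice computation, with $2\zeta$ systematically replaced by $\mu_b$.
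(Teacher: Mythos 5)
Your proposal is correct and follows exactly the route the paper intends: the paper gives no separate proof for this corollary, stating only that it is derived from Theorem~\ref{thm: reduced SCS poly} in the same way that Corollary~\ref{corlatticegenweights} is derived from Theorem~\ref{thm: reduced SCS}, which is precisely your line-by-line transcription with $2\zeta(\alpha\lambda)$ replaced by $\mu_b(\alpha\lambda)$, the bound $b^{-\max(0,m-w_d)}\le b^{w_d}/N$, and the substitution $1/\lambda=\alpha-2\delta$. Your added remarks on the finiteness of $\mu_b(\alpha\lambda)$ for $\alpha\lambda>1$ and the correspondence of the parameter ranges are accurate and harmless.
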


For the following corollary, which is shown analogously to Corollary \ref{corlatticeproduct},
we again assume product weights, i.e., $\gamma_{\setu}=\prod_{j\in\setu} \gamma_j$ for $\setu\subseteq [s]$, 
where the $\gamma_j$ are elements of an infinite, non-increasing sequence of positive reals, $(\gamma_j)_{j\ge 1}$.

\begin{corollary}\label{corpolyproduct}
	Let the assumptions in Algorithm \ref{alg:polylattice} hold.
	Let $\bsg = (Y_1 g_1,\ldots,Y_s g_s)$ be constructed by Algorithm \ref{alg:polylattice}. Then we have for all $\delta \in (0,\frac{\alpha-1}{2}]$
	that
	\begin{align*}
		e_{N,s}(\bsg)
		\le
		C_{s,\alpha,\bsgamma,\delta} \; N^{-\alpha/2 + \delta},
	\end{align*}
	where
	\begin{align*}
		C_{s,\alpha,\bsgamma,\delta}
		&:=
		\left( \left( \sum_{d=1}^{s} \gamma_d^{\frac{1}{\alpha - 2 \delta}} b^{w_d} \right) \left( 2 \mu_b\left(\frac{\alpha}
		{\alpha - 2 \delta}\right) \right) \prod_{d=1}^{s-1} \left( 1 +  \gamma_d^{\frac{1}{\alpha - 2 \delta}}   
		\mu_b\left(\frac{\alpha}{\alpha - 2 \delta}\right) \right) \right)^{\alpha/2 - \delta} .
	\end{align*}
	Furthermore, the constant $C_{s,\alpha,\bsgamma,\delta}$ is bounded independently of the dimension $s$ if
	\begin{align*}
	\sum_{d=1}^{\infty} \gamma_d^{\frac{1}{\alpha - 2 \delta}} b^{w_d} < \infty .
	\end{align*}
\end{corollary}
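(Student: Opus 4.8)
The plan is to mimic, step by step, the proof of Corollary~\ref{corlatticeproduct} for the polynomial-lattice setting, using Theorem~\ref{thm: reduced SCS poly} in place of Theorem~\ref{thm: reduced SCS}. First I would start from the bound
\[
\left(e^2_{N,s}(\bsg)\right)^{\lambda}
\le
\sum_{d=1}^{s} \sum_{d \in \setu\subseteq [s]} \gamma_{\setu}^{\lambda} \frac{2 (\mu_b(\alpha\lambda))^{|\setu|}}{b^{\max(0,m-w_d)}},
\]
valid for $\lambda\in(\tfrac1\alpha,1]$, and use the elementary estimate $b^{-\max(0,m-w_d)}\le b^{-m}b^{w_d}=\tfrac1N b^{w_d}$ to pass to
\[
\left(e^2_{N,s}(\bsg)\right)^{\lambda}
\le
\frac{2}{N}\sum_{d=1}^{s} \sum_{d \in \setu\subseteq [s]} \gamma_{\setu}^{\lambda} (\mu_b(\alpha\lambda))^{|\setu|} b^{w_d}.
\]

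Next, I would invoke the product-weight structure $\gamma_{\setu}=\prod_{j\in\setu}\gamma_j$ to factor each inner sum as $(\gamma_d^{\lambda}\,\mu_b(\alpha\lambda)\,b^{w_d})\cdot\bigl(\sum_{\setv\subseteq[s]\setminus\{d\}}\gamma_{\setv}^{\lambda}(\mu_b(\alpha\lambda))^{|\setv|}\bigr)$, where the set $\setv=\setu\setminus\{d\}$. The trailing sum over subsets of $[s]\setminus\{d\}$ factorizes as $\prod_{j\ne d}(1+\gamma_j^{\lambda}\mu_b(\alpha\lambda))$, which I would bound from above (using monotonicity of $(\gamma_j)$ so that dropping the largest factor is harmless, or simply bounding the product over $j\ne d$ by the product over $j\in\{1,\dots,s-1\}$) by $\prod_{j=1}^{s-1}(1+\gamma_j^{\lambda}\mu_b(\alpha\lambda))$. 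This yields
\[
\left(e^2_{N,s}(\bsg)\right)^{\lambda}
\le
\frac{2}{N}\left(\sum_{d=1}^{s}\gamma_d^{\lambda}b^{w_d}\right)\mu_b(\alpha\lambda)\prod_{j=1}^{s-1}\left(1+\gamma_j^{\lambda}\mu_b(\alpha\lambda)\right),
\]
and taking the $\tfrac{1}{2\lambda}$-th power, absorbing the factor $2$ into $\mu_b(\alpha\lambda)$ to get a $2\mu_b(\alpha\lambda)$, and setting $\tfrac1\lambda=\alpha-2\delta$ (legitimate since $\delta\in(0,\tfrac{\alpha-1}{2}]$ forces $\lambda\in(\tfrac1\alpha,1]$) gives exactly the claimed form of $C_{s,\alpha,\bsgamma,\delta}N^{-\alpha/2+\delta}$.

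For the dimension-independence statement I would take logarithms of the product, apply $\log(1+x)\le x$ term-by-term to obtain $\log\prod_{j=1}^{s-1}(1+\gamma_j^{1/(\alpha-2\delta)}\mu_b(\tfrac{\alpha}{\alpha-2\delta}))\le\mu_b(\tfrac{\alpha}{\alpha-2\delta})\sum_{j=1}^{s-1}\gamma_j^{1/(\alpha-2\delta)}\le\mu_b(\tfrac{\alpha}{\alpha-2\delta})\sum_{j=1}^{\infty}\gamma_j^{1/(\alpha-2\delta)}b^{w_j}$, which is finite under the stated summability hypothesis; likewise the factor $\sum_{d=1}^{s}\gamma_d^{1/(\alpha-2\delta)}b^{w_d}$ is bounded by the same convergent series. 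Hence $C_{s,\alpha,\bsgamma,\delta}$ is bounded uniformly in $s$. None of these steps presents a real obstacle; the only point requiring a little care is the factorization step, namely arguing that the sum over $\setv\subseteq[s]\setminus\{d\}$ (with the $d$-th factor removed) is dominated by the product over $j\in\{1,\dots,s-1\}$, which follows because $(\gamma_j)$ is non-increasing so that the product $\prod_{j\ne d}$ over any $(s-1)$-subset is maximized by taking the first $s-1$ indices. Since the proof is entirely parallel to that of Corollary~\ref{corlatticeproduct}, with $2\zeta(\alpha\lambda)$ replaced throughout by $\mu_b(\alpha\lambda)$, I would present it in condensed form, referring back to that proof for the repeated computations.
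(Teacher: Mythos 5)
Your proposal is correct and follows exactly the route the paper intends: the paper itself only remarks that Corollary~\ref{corpolyproduct} is shown analogously to Corollary~\ref{corlatticeproduct}, and your argument carries out precisely that analogy, starting from Theorem~\ref{thm: reduced SCS poly}, using $b^{-\max(0,m-w_d)}\le b^{w_d}/N$, the product-weight factorization with the max over $d$ handled via the monotonicity of $(\gamma_j)$, the substitution $\tfrac1\lambda=\alpha-2\delta$, and $\log(1+x)\le x$ for the dimension-independence, with $2\zeta(\alpha\lambda)$ replaced throughout by $\mu_b(\alpha\lambda)$. No gaps; the condensed presentation with reference back to the lattice case is exactly what the paper does.
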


By setting all $w_j$ equal to zero in Algorithm \ref{alg:polylattice}, we obtain an (unreduced) SCS algorithm, and the corresponding 
analogous results in Theorem \ref{thm: reduced SCS poly} and Corollaries \ref{corpolygenweights} and \ref{corpolyproduct}. We would like to point out that an SCS algorithm for the 
polynomial lattice case has not existed previously.

\begin{theorem} \label{thm:SCSgeneralpoly}
    Let $f\in\FF_b[x]$, $f(x)=x^m$ for a fixed $m\in\NN$, let $\gamma_{\setu}$, $\setu\subseteq [s]$, be general weights, and let the worst-case error 
    $e_{N,s}$ in the weighted Walsh space $\calH (K_{s,\alpha,\bsgamma})$ be defined as above. Let $\bsg^{0}\in(\FF_b [x])^s$ be an arbitrary initial vector. Then Algorithm \ref{alg:polylattice} applied with $w_1=\cdots=w_s=0$ constructs $\bsg = (g_1, \ldots, g_s)$ such that, for $\lambda \in (\frac{1}{\alpha},1]$, the squared worst-case error $e^2_{N,s}(\bsg)$ satisfies
	\begin{align*}
		e^2_{N,s}((g_1, \ldots, g_s))
		&\le
		\left( \sum_{\emptyset \ne \setu \subseteq [s]} \abs{\setu}
		\gamma_{\setu}^{\, \lambda} \frac{2 (\mu_b(\alpha \lambda))^{|\setu|}}{b^{m}} \right)^{\frac{1}{\lambda}} 
		.
	\end{align*}
	In particular, $e_{N,s} (\bsg)\in \mathcal{O} (N^{-\alpha/2+\delta})$ for $\delta$ arbitrarily close to zero, where the implied constant is independent 
	of $s$ if
	\[
	C_{\delta}
	:=
	\sup_{s\in\NN} \left[2 \sum_{\emptyset \ne \setu\subseteq [s]} \abs{\setu} 
	\gamma_{\setu}^{\frac{1}{\alpha-2\delta}} \left(\mu_b\left(\frac{\alpha}{\alpha-2\delta}\right)\right)^{|\setu|}  \right]
	<
	\infty
	.
	\]
\end{theorem}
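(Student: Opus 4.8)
The plan is to obtain Theorem~\ref{thm:SCSgeneralpoly} as an immediate specialization of Theorem~\ref{thm: reduced SCS poly} and Corollary~\ref{corpolygenweights} to the degenerate choice of reduction indices $w_1 = w_2 = \cdots = w_s = 0$, exactly mirroring how Theorem~\ref{thm:SCSgeneral} was deduced from Theorem~\ref{thm: reduced SCS} and Corollary~\ref{corlatticegenweights} in the lattice case. First I would record what Algorithm~\ref{alg:polylattice} does in this case: since $Y_j(x) = x^{w_j} = 1$ for every $j$, the restricted search sets $\mathcal{G}_{N,w_j}$ become $\{g \in \FF_b[x] : 0 \le \deg(g) < m,\ \gcd(g, x^m) = 1\}$, which is precisely the full admissible search space for a generating-vector component; hence the algorithm reduces to the unreduced successive coordinate search algorithm and outputs $\bsg = (g_1, \ldots, g_s)$ with no leading factors $Y_j$.

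Next I would substitute $w_d = 0$ into the conclusion of Theorem~\ref{thm: reduced SCS poly}. Then $\max(0, m - w_d) = m$ for all $d$, so the general bound
\[
e^2_{N,s}(\bsg) \le \left(\sum_{d=1}^{s} \sum_{d \in \setu \subseteq [s]} \gamma_{\setu}^{\,\lambda}\, \frac{2(\mu_b(\alpha\lambda))^{|\setu|}}{b^{\max(0,m-w_d)}}\right)^{1/\lambda}
\]
collapses to exactly the claimed estimate with $b^m$ in the denominator, valid for all $\lambda \in (\tfrac1\alpha, 1]$. For the asymptotic part I would invoke Corollary~\ref{corpolygenweights}: with $w_d = 0$ one has $b^{w_d} = 1$, so the constant $C_{\delta,0}$ appearing there coincides verbatim with the quantity $C_\delta$ defined in the statement of Theorem~\ref{thm:SCSgeneralpoly}. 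Choosing $\tfrac1\lambda = \alpha - 2\delta$, which keeps $\lambda \in (\tfrac1\alpha, 1]$ precisely for $\delta \in (0, \tfrac{\alpha-1}{2}]$, the corollary then gives $e_{N,s}(\bsg) \le (C_\delta)^{\alpha/2 - \delta}\, N^{-\alpha/2 + \delta}$ whenever $C_\delta < \infty$, with the implied constant visibly free of $s$; this is the second assertion.

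Since all the substantive work has already been carried out in the proofs of Theorem~\ref{thm: reduced SCS poly} (itself modelled on that of Theorem~\ref{thm: reduced SCS}) and Corollary~\ref{corpolygenweights}, there is essentially no obstacle here. The only point deserving a line of care is that the averaging argument in the proof of Theorem~\ref{thm: reduced SCS poly} remains valid with $w_d = 0$, i.e.\ that the cardinality $|\mathcal{G}_{N,0}| = b^{m-1}$ enters correctly in the split into the polynomial analogues of the terms $T_1$, $T_{2,1}$, $T_{2,2}$; but this is merely the boundary instance of the case $w_d < m$ already treated there, so nothing new is required. I would therefore present the proof in one sentence: apply Theorem~\ref{thm: reduced SCS poly} and Corollary~\ref{corpolygenweights} with $w_1 = \cdots = w_s = 0$.
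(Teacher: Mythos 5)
Your proposal is correct and coincides with the paper's own route: the paper obtains Theorem~\ref{thm:SCSgeneralpoly} exactly by specializing Theorem~\ref{thm: reduced SCS poly} and Corollary~\ref{corpolygenweights} to $w_1=\cdots=w_s=0$ (noting that then $Y_j=1$, $\mathcal{G}_{N,0}$ is the full search space, $\max(0,m-w_d)=m$, and $C_{\delta,0}=C_\delta$), mirroring the deduction of Theorem~\ref{thm:SCSgeneral} in the lattice case. Nothing further is needed.
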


\subsection{Fast implementation of the reduced SCS algorithm for polynomial lattice points}

By using the same theory that was used in \cite[Section 5]{DKLP15}, it is possible to obtain a fast implementation of the SCS algorithm also for the polynomial lattice rule case. 
Indeed, the precomputation outlined in Algorithm \ref{alg:fast_red_scs} can be done similarly for polynomial lattice points by using an analogous error expression that 
was shown in \cite{DKPS05}. Furthermore, as outlined for the reduced CBC construction of polynomial lattice rules in \cite{DKLP15},
the matrix-vector multiplication can be implemented such that it uses a number of operations that exceeds the order of magnitude in the lattice case only by one logarithmic factor. These observations lead to the following theorem.

\begin{theorem}
Algorithm \ref{alg:polylattice} can be implemented such that its computational cost is of order 
\begin{align*}
	\mathcal{O} \left( m b^m + \min\{s,s^{\ast}\} \, b^m + \sum_{j=1}^{\min\{s,s^{\ast}\}} (m-w_d)^2 b^{m-w_d} \right).
\end{align*}
\end{theorem}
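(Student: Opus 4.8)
The plan is to adapt the proof of Theorem~\ref{thm:computational_complexity_lat} to the polynomial lattice setting, substituting the fast machinery for polynomial lattices developed in \cite[Section~5]{DKLP15} for the circulant structure exploited in Theorems~\ref{thm:matrix_structure_bnot2} and \ref{thm:matrix_structure_b2}. First I would record the product form of the squared worst-case error in $\cH(K_{s,\alpha,\bsgamma}^{\wal})$ for product weights $\gamma_{\setu}=\prod_{j\in\setu}\gamma_j$: by \cite{DKPS05} one has, for $\bsg\in(\FF_b[x])^s$,
\begin{equation*}
e^2_{N,s}(\bsg)\;=\;-1+\frac1{b^m}\sum_{n=0}^{b^m-1}\prod_{j=1}^s\Bigl(1+\gamma_j\,\varphi\bigl(\nu(n\,g_j/x^m)\bigr)\Bigr),
\end{equation*}
where $\varphi$ is the Walsh analogue of the function $\omega$ of Section~\ref{secFastscs}; it depends only on the base-$b$ digits of its argument, can be tabulated at the $b^m$ points $\nu(n/x^m)$, $0\le n<b^m$, in $\mathcal{O}(mb^m)$ operations, and satisfies the symmetry property needed for the block reduction. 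This yields a reduced fast SCS algorithm entirely parallel to Algorithm~\ref{alg:fast_red_scs}: precompute the table of $\varphi$-values; for an initial vector $\bsg^{0}=(Y_1\bar g_1,\dots,Y_s\bar g_s)$ with $\bar g_j\in\mathcal{G}_{N,w_j}$, initialise $\bsq=(q(0),\dots,q(b^m-1))$; and then for $d=1,\dots,\min\{s,s^{\ast}\}$ divide $g_d^{0}$ out of $\bsq$, fold the resulting vector into one of length $b^{m-w_d}$, evaluate the analogue $T_d(g)$ for all $g\in\mathcal{G}_{N,w_d}$ as one matrix--vector product with the reduced Walsh matrix, pick the minimiser $g_d$, and update $\bsq$; finally set the trailing components equal to $1$.

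Next I would bound the cost of each block exactly as in the proof of Theorem~\ref{thm:computational_complexity_lat}. The tabulation of $\varphi$ gives the term $\mathcal{O}(mb^m)$; since $\bsg^{0}$ has the reduced form, only the first $s^{\ast}$ factors in each $q(k)$ are nontrivial, so the initialisation, and likewise each of the $\mathcal{O}(\min\{s,s^{\ast}\})$ divide-out, fold, and update steps, contributes $\mathcal{O}(\min\{s,s^{\ast}\}\,b^m)$ altogether. The single quantitative difference from the lattice case is the matrix--vector product in the $d$-th step: appealing to \cite[Section~5]{DKLP15}, the reduced Walsh matrix attached to $\mathcal{G}_{N,w_d}$ (which has cardinality $b^{m-w_d-1}$) inherits the block structure of its unreduced counterpart under the substitution $g\mapsto x^{w_d}g$ together with the constraint $\gcd(g,x^m)=1$, just as the circulant structure was inherited in Theorems~\ref{thm:matrix_structure_bnot2} and \ref{thm:matrix_structure_b2}; the product can therefore be computed by a fast Walsh-type transform in $\mathcal{O}\bigl((m-w_d)^2 b^{m-w_d}\bigr)$ operations, i.e. by a factor $m-w_d=\mathcal{O}(\log_b b^{m-w_d})$ more than the $\mathcal{O}\bigl((m-w_d) b^{m-w_d}\bigr)$ of the lattice case. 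Summing the three groups of contributions gives the claimed bound.

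I expect the main obstacle to be precisely the structural claim in the last step: checking that reducing the search set to $\mathcal{G}_{N,w_d}$ and multiplying by $Y_d=x^{w_d}$ does not destroy the recursive block decomposition of the Walsh matrix that underlies its fast multiplication, so that one still pays only one extra logarithmic factor relative to the lattice case. This is the polynomial analogue of Theorems~\ref{thm:matrix_structure_bnot2} and \ref{thm:matrix_structure_b2}, and it can be extracted from the treatment of the reduced fast CBC construction for polynomial lattice rules in \cite[Section~5]{DKLP15}; once it is available, the remaining estimates are the same bookkeeping as in the proof of Theorem~\ref{thm:computational_complexity_lat}.
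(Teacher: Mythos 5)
Your proposal matches the paper's argument: the paper likewise only sketches this result, invoking the analogous product-form error expression from \cite{DKPS05} for the precomputation and update steps (same bookkeeping as in Theorem \ref{thm:computational_complexity_lat}), and deferring the reduced matrix--vector multiplication to the treatment in \cite[Section~5]{DKLP15}, which costs one extra logarithmic factor and yields the $(m-w_d)^2 b^{m-w_d}$ terms. The structural point you flag as the main obstacle is exactly what the paper also outsources to \cite{DKLP15} rather than proving anew, so your route is essentially identical (only your passing appeal to a ``symmetry property'' is superfluous in the Walsh setting, where the fast multiplication rests on the block structure from \cite{DKLP15}, not on an analogue of $\omega(x)=\omega(1-x)$).
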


\section{Conclusion}

In this paper, we studied a combination of the SCS algorithm introduced in \cite{ELN18}, and the reduced construction approach introduced in \cite{DKLP15}, with the goal of pooling the advantages of these two methods: by the reduced construction method, we can drastically reduce the computational cost compared to the unreduced algorithm, and by an SCS construction we may obtain better numerical error values for the corresponding integration rules. We showed that our new algorithm yields generating vectors of lattice rules achieving an almost optimal convergence rate, where the weights in the function space can help in overcoming the curse of dimensionality. By our new results, we extended previous results to arbitrary weights and non-prime numbers of points. 
Furthermore, the considered algorithms were implemented in an efficient way using a modern programming language; numerical tests confirm our main results. Similar observations hold for the case of polynomial lattice rules. It would be interesting to study further improvements on CBC or SCS algorithms, for example the choice of good initial vectors $\bsz^0$. In future research, we will consider the use of reduced CBC and SCS construction methods for the special choice of product and order dependent (POD) weights, i.e., 
weights of the form $\gamma_{\setu} = \Gamma_{|\setu|} \prod_{j \in \setu} \gamma_j$, where the $\Gamma_{|\setu|}$ only depend 
on the cardinality of $\setu$ (see, e.g., \cite{KSS12}), and the application of the obtained lattice rules in PDE problems.

\section*{Acknowledgements}
The authors would like to thank two anonymous referees for helpful comments regarding an improved presentation of the results. Moreover, the authors are grateful for
Friedrich Pillichshammer's remarks which enhanced the exposition of the obtained results.

P.~Kritzer is supported by the Austrian Science Fund (FWF): Project F5506-N26, which is part of the Special Research Program ``Quasi-Monte Carlo Methods: Theory and Applications''. P. Kritzer furthermore gratefully acknowledges the partial support of the Erwin Schr\"odinger International Institute for Mathematics and Physics (ESI) in Vienna under the thematic programme ``Tractability of High Dimensional Problems and Discrepancy''.

The authors acknowledge the support of the National Science Foundation (NSF) under Grant DMS-1638521 to the	Statistical and Applied Mathematical Sciences Institute.

\begin{small}
\noindent\textbf{Authors' addresses:}\\

 \noindent Adrian Ebert\\
 Department of Computer Science\\
 KU Leuven\\
 Celestijnenlaan 200A, 3001 Leuven, Belgium.\\
 \texttt{adrian.ebert@cs.kuleuven.be}

 \medskip
 
 \noindent Peter Kritzer\\
 Johann Radon Institute for Computational and Applied Mathematics (RICAM)\\
 Austrian Academy of Sciences\\
 Altenbergerstr. 69, 4040 Linz, Austria.\\
 \texttt{peter.kritzer@oeaw.ac.at}

 \end{small}

\end{document}